\newcommand{\N}{\mathbb{N}}
\newcommand{\Z}{\mathbb{Z}}
\newcommand{\D}{\mathbb{D}}
\newcommand{\Sym}{\mathbb{S}}
\newcommand{\Aut}{\operatorname{Aut}}
\newcommand{\GL}{\mathrm{GL}}
\newcommand{\id}{\mathrm{id}}
\newcommand{\Soc}{\mathrm{Soc}}
\newcommand{\Fix}{\mathrm{Fix}}
\numberwithin{equation}{section}
\numberwithin{figure}{section}
\numberwithin{table}{section}
\newtheorem{thm}{Theorem}[section]
\newtheorem{lem}[thm]{Lemma}
\newtheorem{cor}[thm]{Corollary}
\newtheorem{pro}[thm]{Proposition}
\newtheorem{defn}[thm]{Definition}
\newtheorem{example}[thm]{Example}
\newtheorem{question}[thm]{Question}
\newtheorem{convention}[thm]{Convention}
\newtheorem{rem}[thm]{Remark}
\newtheorem{exa}[thm]{Example}
\begin{document}

\keywords{\em Keywords: Yang-Baxter equation, set-theoretic solution,
brace, skew brace, radical ring, bijective $1$-cocycle, K\"othe conjecture.}

\title{Skew left braces of nilpotent type}

\author{Ferran Ced\'o}
\author{Agata Smoktunowicz}
\author{Leandro Vendramin}
\address{Departament de Matem\`atiques, Universitat Aut\`onoma de Barcelona, 08193 Bellaterra (Barcelona), Spain}
\email{cedo@mat.uab.cat}

\address{School of Mathematics, The University of Edinburgh, James Clerk Maxwell Building, The Kings Buildings, Mayfield Road EH9 3JZ, Edinburgh}
\email{A.Smoktunowicz@ed.ac.uk}

\address{IMAS--CONICET and Departamento de Matem\'atica, FCEN, Universidad de Buenos Aires, Pabell\'on~1, Ciudad Universitaria, C1428EGA, Buenos Aires, Argentina}
\email{lvendramin@dm.uba.ar}




\maketitle

\begin{abstract}
    We study series of left ideals of skew left braces that are
    analogs of upper central series of groups. These concepts allow us to
    define left and right nilpotent skew left braces. Several results related to these concepts are proved
    and applications to infinite left braces are given.
    Indecomposable solutions of the Yang--Baxter equation are explored using
    the structure of skew left braces.
\end{abstract}

\setcounter{tocdepth}{1}
\tableofcontents


\section*{Introduction}

The Yang--Baxter equation was first introduced in the field of
statistical mechanics. It depends on the idea that in some
scattering situations particles may preserve their momentum while
changing their quantum internal states. The equation states that a
matrix $R$ satisfies
\[
(R\otimes\id)(\id\otimes R)(R\otimes\id)
=(\id\otimes R)(R\otimes\id)(\id\otimes R).
\]
In one dimensional quantum systems, $R$ is the scattering matrix and
if it satisfies the Yang--Baxter equation then the system is
integrable. The Yang--Baxter equation also appears in topology and
algebra mainly because its connections with braid groups.  It takes
its name from independent work of Yang~\cite{MR0261870} and
Baxter~\cite{MR0290733}.

In~\cite{MR1183474} Drinfeld observed that the Yang--Baxter equation
also makes sense even if $R$ is not a linear operator $V\otimes V\to
V\otimes V$ but a map $X\times X\to X\times X$ where $X$ is a set.
In this context, non-degenerate solutions are interesting. Recall
that a set-theoretic solution of the Yang--Baxter equation $(X,r)$
is said to be \emph{non-degenerate} if
$r(x,y)=(\sigma_x(y),\tau_y(x))$ for all $x,y\in X$, where
$\sigma_x$ and $\tau_y$ are permutations of $X$.  Permutation
solutions are examples of non-degenerate solutions: if $X$ is a
set, $\sigma$ and $\tau$ are permutations of $X$ such that
$\sigma\tau=\tau\sigma$ and $r(x,y)=(\sigma(y),\tau(x))$, then the
pair $(X,r)$ is a non-degenerate set-theoretic solution of the
Yang--Baxter equation.

The first papers where non-degenerate set-theoretic solutions were studied
are~\cite{MR1722951,MR1637256,MR1769723,MR1809284}.  It turns out that
set-theoretic solutions have connections for example with groups of I-type,
Bieberbach groups, bijective $1$-cocycles, Garside theory, etc.
In~\cite{MR2278047} Rump found a new and unexpected connection between
set-theoretic solutions and Jacobson radical rings: such rings produce
involutive solutions. This observation was the key for introducing a new
algebraic structure that generalizes Jacobson radical rings.  To strengthen the
connection with rings, Rump conceived the name \emph{braces}. New connections
appeared, for example with regular subgroups and Hopf--Galois
extensions~\cite{MR3465351}, left orderable groups~\cite{MR3815290}, flat
manifolds~\cite{MR3291816}.

In~\cite{MR3647970} braces were generalized to skew left braces and
this structure was used to produce and study not necessarily
involutive solutions. Skew braces are useful for studying regular
subgroups and Hopf--Galois extensions, bijective $1$-cocycles, rings
and near-rings, triply factorized groups, see for
example~\cite{MR3763907}.

A skew left brace is a set with two compatible group structures. One of these
groups is known as the \emph{multiplicative group}; the other as the
\emph{additive group}. The terminology used in the theory of Hopf--Galois
extensions suggest that the additive group determines the \emph{type} of the
skew left brace. For example, skew left braces of abelian type are Rump's
braces, i.e. those braces with abelian additive group.

A skew left brace is a triple $(A,+,\circ)$, where $(A,+)$ and $(A,\circ)$ are
(not necessarily abelian)  groups such that the compatibility
\[
a\circ (b+c)=a\circ b-a+a\circ c
\]
holds for all $a,b,c\in A$.

Radical rings are certain examples of skew left braces where the additive group
$(A,+)$ is abelian.  For $a,b\in A$ one defines
\[
    a*b=-a+a\circ b-b.
\]
In the case of Jacobson radical rings, the operation $*$ is the multiplication
of the ring. For arbitrary skew left braces the operation $*$ is not
associative, but nevertheless the relation with radical rings suggests how to
translate ideas from ring theory to the world of skew left braces.

In~\cite[Definition 3.1]{MR1722951} Etingof, Schedler and Soloviev introduced
multipermutation solutions. As the name suggests, such solutions are
generalizations of permutation solutions.  Gateva--Ivanova's strong
conjecture~\cite[2.28(I)]{MR2095675} states that square-free involutive
non-degenerate set-theoretic solutions are multipermutation solutions.  Despite
the conjecture was proved to be false~\cite{MR3437282}, it was the motivation
of several original
investigations~\cite{MR2652212,MR3177933,MR2885602,MR3439888}.

Braces provide a powerful algebraic framework to work with set-theoretic
solutions.  In \cite[Theorem 3.1]{MR3647970} it is proved that for a skew left
brace $A$, the map
\[
    r_A\colon A\times A\to A\times A,\quad
    r_A(a,b)=(-a+a\circ b,(-a+a\circ b)'\circ a\circ b),
\]
is a non-degenerate
set-theoretic solution of the Yang--Baxter equation.
In~\cite[Theorem 4.5]{MR3763907} it is proved that
if $(X,r)$ be a non-degenerate solution of the Yang--Baxter equation,
then there exists a unique skew left brace structure over the group
\[
    G=G(X,r)=\langle X:x\circ y=u\circ v\text{ whenever $r(x,y)=(u,v)$}\}
\]
such that
  \[
  \begin{tikzcd}
         X\times X\arrow[r, "r"] \arrow["\iota\times\iota"', d]
         & X\times X \arrow[d, "\iota\times\iota" ] \\
         G\times G \arrow[r, "r_{G}"]
         & G\times G
  \end{tikzcd}
  \]
where $\iota\colon X\to G(X,r)$ is the canonical map.
Moreover, the pair $(G(X,r),\iota)$ has the following universal property: if $B$
is a skew left brace and $f\colon X\to B$ is a map such that
\[
  \begin{tikzcd}
         X\times X\arrow[r, "r"] \arrow["f\times f"', d]
         & X\times X \arrow[d, "f\times f" ] \\
         B\times B \arrow[r, "r_{B}"]
         & B\times B
  \end{tikzcd}
\]
then there exists a unique skew left brace homomorphism
$\phi\colon G(X,r)\to B$ such that
\[
\begin{tikzcd}
       X \arrow[rd, "f"'] \arrow[r, "\iota"] & G\arrow[d, "\phi"] \\
       & B
\end{tikzcd}
\qquad\text{and}\qquad
\begin{tikzcd}
       G\times G\arrow[r, "r_{G}"] \arrow["\phi\times\phi"', d]
         & G\times G \arrow[d, "\phi\times\phi" ] \\
         B\times B \arrow[r, "r_{B}"]
         & B\times B
  \end{tikzcd}
  \]
commute.

To study Gateva--Ivanova's strong conjecture Rump introduced two sequences of
left ideals~\cite{MR2278047}.  These sequences turned out to be very important
for understanding the structure of Rump's braces.  It makes sense to consider
similar sequences in the context of skew left braces.  The right series of a
skew left brace $A$ is defined as the sequence
\[
A\supseteq A^{(2)}\supseteq A^{(3)}\supseteq\cdots,
\]
where $A^{(n+1)}=A^{(n)}*A$ denotes the additive subgroup of $A$ generated by
elements of the form $x*a$ for $a\in A$ and $x\in A^{(n)}$. Each $A^{(n)}$ is
an ideal of $A$, see Proposition~\ref{pro:right_series}.  A skew left brace
is said to be \emph{right nilpotent} if there is a positive integer $n$ such
that $A^{(n)}=0$.
The left series of a
skew left brace $A$ is the sequence
\[
A\supseteq A^2\supseteq A^3\supseteq\cdots,
\]
where $A^{n+1}=A*A^n$.  Each $A^n$ is a left ideal of $A$, see
Proposition~\ref{pro:left_series}.
A skew left brace
is said to be \emph{left nilpotent} if there is a positive integer $n$ such
that $A^{n}=0$.
Following~\cite{MR3814340} we also define the sequence
\[
    A\supseteq A^{[2]}\supseteq A^{[3]}\supseteq\cdots,
\]
where $A^{[1]}=A$ and $A^{[n+1]}$ is the additive subgroup of $A$ generated by
all the elements from the sets $A^{[i]}*A^{[n+1-i]}$ for $1\leq i\leq n$.  Each
$A^{[n]}$ is a left ideal of $A$, see Proposition~\ref{pro:Smoktunowicz}. A
skew left brace $A$ is said to be \emph{strongly nilpotent} if there is a
positive integer $n$ such that $A^{[n]}=0$. Theorem~\ref{thm:equivalence}
states that a skew left brace $A$ is strongly nilpotent if and only if it is
left and right nilpotent.

The sequence of the $A^{[n]}$ is a basic tool for understanding problems
similar to K\"othe conjecture in the context of skew left braces, see
Section~\ref{nilpotency}.

One of our main goals is to study the connection between left and
right nilpotency and the structure of skew left braces.  We study
the connection between right nilpotent skew left braces and
multipermutation solutions (see
Theorem~\ref{thm:mpl&right_nilpotent}).  We also explore the cases
where the groups of the skew left brace are nilpotent.  If the skew
left brace is finite and both groups are nilpotent, then it is
possible to write the skew left brace as a direct product of skew
left braces of prime-power order (see Corollary~\ref{cor:product}).
This result is then applied to study infinite skew left braces
with multiplicative group isomorphic to $\Z$ (see
Theorems~\ref{thm:Z} and~\ref{thm:ZrightNilpotent}); this
answers~\cite[Question A.10]{MR3763907}.  Left nilpotent skew left
braces are also studied. We show that a finite skew left brace with nilpotent additive 
group is left nilpotent if and only if its multiplicative
group is nilpotent (see Theorem~\ref{thm:left_nilpotent=nilpotent}).

\medskip
This paper is organized as follows. In Section~\ref{preliminaries}
basic definitions are recalled. These definitions include skew left
braces, left ideals and ideals.  In Section~\ref{nilpotency} we
define left and right nilpotent skew left braces. In
Theorem~\ref{thm:IcapSoc} and Proposition~\ref{thm:Fix} we prove
analogs of Hirsch's theorem for nilpotent groups. We use these
results in Theorem~\ref{thm:mpl&right_nilpotent} to prove that,
under mild assumptions, a skew left brace is right nilpotent if and
only if it has finite multipermutation level (see
Theorem~\ref{thm:mpl&right_nilpotent}). In Section~\ref{perfect} we
deal with perfect skew left braces. Using wreath product of skew
left braces to show that one cannot produce an analog of Gr\"un's
lemma for skew left braces.  Skew left braces of nilpotent type are
studied in Section~\ref{nilpotent_type}. Our first result is
Corollary~\ref{cor:product}, where we prove that if both groups of a
finite skew left  brace are nilpotent, then the skew left brace is a
direct product of sub skew left braces of prime-power size.
Theorem~\ref{thm:A2} is the consequence of applying Hall's results
from \cite{Hall} to the case of skew left braces. We prove in
Theorem~\ref{thm:left_nilpotent=nilpotent} that a finite skew left
brace of nilpotent type is left nilpotent if and only if its
multiplicative group is nilpotent. In particular, skew left braces
of prime-power size are left nilpotent. In Section~\ref{cyclic} we
prove that skew left braces of abelian type with infinite-cyclic
multiplicative group are trivial, see Theorem~\ref{thm:Z}.
Theorem~\ref{thm:infinite_dihedral} shows that this result cannot be
extended to skew left braces. Finally, in
Section~\ref{indecomposable} some of our results are used for
studying indecomposable set-theoretic solutions. In this section we
give a positive answer to~\cite[Question~5.6]{MR3771874}.

\section{Preliminaries}
\label{preliminaries}

Recall that a \emph{skew left brace} is a triple $(A,+,\circ)$, where $(A,+)$ and
$(A,\circ)$ are (not necessarily abelian) groups such that
\[
a\circ (b+c)=a\circ b-a+a\circ c
\]
holds for all $a,b,c\in A$. The group $(A,\circ)$ will be the
\emph{multiplicative group} of the skew left brace and $(A,+)$ will be the
\emph{additive group} of the skew left brace.  We write $a'$ to denote the inverse
of $a$ with respect to the circle operation $\circ$.
A skew left brace $(A,+,\circ)$ such that $a\circ b=a+b$ for all $a,b\in A$ is
said to be \emph{trivial}.

\begin{defn}
    Let $\mathcal{X}$ be a property of groups. A skew left brace $A$ is said to be
    of $\mathcal{X}$-type if its additive group belongs to $\mathcal{X}$.
\end{defn}

Rump's braces are skew left braces of abelian type.

\begin{convention}
    Skew left braces of abelian type will be called \emph{left braces}.
\end{convention}

If $A$ is a skew left brace, the multiplicative group
acts on the additive group by automorphisms. The map $\lambda\colon
(A,\circ)\to \Aut(A,+)$, $a\mapsto\lambda_a$, where $\lambda_a(b)=-a+a\circ b$,
is a group homomorphism, see~\cite[Corollary 1.10]{MR3647970}.

\begin{rem}
Let $A$ be a skew left brace. Then the following formulas hold:
\begin{align*}
    &a\circ b = a+\lambda_a(b),
    &&a+b=a\circ \lambda^{-1}_a(b),
    &&\lambda_a(a')=-a.
\end{align*}
\end{rem}

Let $A$ be a skew left brace. For $a,b\in A$ let
\[
    a*b=\lambda_a(b)-b=-a+a\circ b-b.
\]
The following identities are easily verified:
\begin{align}
&a*(b+c)=a*b+b+a*c-b,\\
&(a\circ b)*c=(a*(b*c))+b*c+a*c,
\end{align}
These identities are similar to the usual
commutator identities.

\begin{thm}
Let $A$ be an additive (not necessarily abelian) group.  A skew left
brace structure over $A$ is equivalent to an operation $A\times A\to
A$, $(a,b)\mapsto a*b$, such that $a*(b+c)=a*b+b+a*c-b$ holds for
all $a,b,c\in A$, and the operation $a\circ b=a+a*b+b$ turns $A$
into a group.
\end{thm}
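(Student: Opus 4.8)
The statement asserts a bijective correspondence between two sets of data carried by the fixed additive group $(A,+)$: on one side, skew left brace structures $(A,+,\circ)$; on the other, binary operations $*$ obeying $a*(b+c)=a*b+b+a*c-b$ for which $a\circ b=a+a*b+b$ is a group law. The plan is to produce mutually inverse assignments between these two classes and to verify that each assignment carries the defining axioms of one structure to those of the other.

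For the passage from a skew left brace to an operation, I would set $a*b=-a+a\circ b-b=\lambda_a(b)-b$. The identity $a*(b+c)=a*b+b+a*c-b$ is then precisely the distributivity relation already recorded in the preliminaries, which follows directly from the brace compatibility $a\circ(b+c)=a\circ b-a+a\circ c$. Moreover $a+a*b+b=a+(-a+a\circ b-b)+b=a\circ b$, so the recovered multiplication is $\circ$ itself, which is a group law because $A$ is a skew left brace. Hence $*$ lies in the target class.

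For the reverse passage, I would start from an operation $*$ with the stated identity for which $a\circ b=a+a*b+b$ is, by hypothesis, a group law, and I would check that $(A,+,\circ)$ satisfies the brace compatibility. This is the one genuine computation. Expanding the left-hand side gives $a\circ(b+c)=a+a*(b+c)+b+c$, and substituting the identity yields $a+a*b+b+a*c-b+b+c=a+a*b+b+a*c+c$ after the cancellation $-b+b=0$. Expanding the right-hand side gives $a\circ b-a+a\circ c=a+a*b+b-a+a+a*c+c=a+a*b+b+a*c+c$ after the cancellation $-a+a=0$. The two sides coincide. I would emphasize that every step uses only the group axioms of $(A,+)$ and never commutativity, so the argument is valid for a non-abelian additive group.

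Finally, to see that the two assignments are mutually inverse, I would note that forming $*$ from $\circ$ and then $a+a*b+b$ returns $\circ$ (shown above), while forming $\circ$ from $*$ and then $-a+a\circ b-b=-a+(a+a*b+b)-b=a*b$ returns $*$. There is no deep obstacle here: the whole proof is bookkeeping, and the only point requiring care is keeping the order of the summands correct in the reverse-direction compatibility check, since $(A,+)$ need not be abelian.
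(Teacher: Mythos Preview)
Your proof is correct and follows exactly the approach the paper has in mind; the paper simply records the result as ``straightforward'' without writing out the computations. Your careful verification of the compatibility identity in the reverse direction, keeping track of the order of summands in the non-abelian additive group, is precisely what is needed.
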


\begin{proof}
It is straightforward.
\end{proof}

A \emph{left ideal} of a skew left brace $A$ is a
subgroup $I$ of the additive group of $A$ such that
$\lambda_a(I)\subseteq I$ for all $a\in A$. It is not hard to prove
that a left ideal is a subgroup of the multiplicative group of the
skew left brace. An \emph{ideal} of $A$ is a left ideal $I$ of $A$
such that $a\circ I=I\circ a$ and $a+I=I+a$ for all $a\in A$.

\begin{defn}
    For a skew left brace $A$ let
\[
    \Fix(A)=\{a\in A:\lambda_x(a)=a\text{ for all $x\in A$}\}.
\]
\end{defn}
\begin{pro}
    Let $A$ be a skew left brace. Then $\Fix(A)$ is a left ideal of $A$.
\end{pro}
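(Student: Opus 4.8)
The plan is to verify the two defining conditions of a left ideal directly, relying on the fact that each $\lambda_x$ is an automorphism of the additive group $(A,+)$. Recall that $\lambda\colon(A,\circ)\to\Aut(A,+)$ is a group homomorphism, so every $\lambda_x$ is additive; in particular $\lambda_x(a+b)=\lambda_x(a)+\lambda_x(b)$, $\lambda_x(-a)=-\lambda_x(a)$, and $\lambda_x(0)=0$ for all $x,a,b\in A$.

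First I would check that $\Fix(A)$ is a subgroup of $(A,+)$. Since $\lambda_x(0)=0$ for every $x\in A$, we have $0\in\Fix(A)$. If $a,b\in\Fix(A)$, then for every $x\in A$ additivity gives $\lambda_x(a+b)=\lambda_x(a)+\lambda_x(b)=a+b$, so $a+b\in\Fix(A)$; likewise $\lambda_x(-a)=-\lambda_x(a)=-a$ shows $-a\in\Fix(A)$. Hence $\Fix(A)$ is an additive subgroup.

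It then remains to verify the invariance condition $\lambda_a(\Fix(A))\subseteq\Fix(A)$ for every $a\in A$. This is immediate from the definition: if $b\in\Fix(A)$, then $\lambda_x(b)=b$ for all $x\in A$, and taking $x=a$ yields $\lambda_a(b)=b\in\Fix(A)$. Thus each $\lambda_a$ restricts to the identity on $\Fix(A)$, and the left-ideal condition holds tautologically.

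I do not anticipate a genuine obstacle. The only point worth stating carefully is that the invariance under the $\lambda$-action is not something one must work for but follows by construction, since the elements of $\Fix(A)$ are precisely those fixed by every $\lambda_x$; the entire content of the proposition is the routine observation that the common fixed-point set of a family of additive automorphisms is an additive subgroup.
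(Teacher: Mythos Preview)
Your proof is correct and follows exactly the approach of the paper, which simply records that a routine calculation shows $\Fix(A)$ is an additive subgroup and that $\lambda_x(\Fix(A))\subseteq\Fix(A)$ is clear. You have merely spelled out the details the paper leaves implicit.
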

\begin{proof}
    A routine calculation proves that $\Fix(A)$ is a subgroup of the additive
    group of $A$. Clearly $\lambda_x(\Fix(A))\subseteq \Fix(A)$ for all $x\in A$.
\end{proof}
The following example shows that in general $\Fix(A)$ is not an ideal:
\begin{exa}
Consider the semidirect product $A=\Z/(3)\rtimes \Z/(2)$ of the
trivial braces $\Z/(3)$ and $\Z/(2)$
via the non-trivial action of $\Z/(2)$ over $\Z/(3)$.
We have
$$\lambda_{(x,y)}(a,b)=(x,y)(a,b)-(x,y)=(x+(-1)^ya,y+b)-(x,y)=((-1)^ya,b).$$
Hence $\Fix(A)=\{ (0,b)\mid b\in \Z/(2)\}$. Clearly $\Fix(A)$ is not a
normal subgroup of the multiplicative group of $A$. Thus $\Fix(A)$ is
not an ideal of $A$.
\end{exa}

If $X$ and $Y$ are subsets of a skew left brace $A$, $X*Y$ is the additive
subgroup of $A$ generated by elements of the form $x*y$, $x\in X$ and $y\in Y$, i.e.
\[
X*Y=\langle x*y:x\in X\,,y\in Y\rangle_+.
\]

\begin{lem}
    \label{lem:A*I}
    Let $A$ be a skew left brace. A subgroup $I$ of the additive group
    of $A$ is a left ideal of $A$ if and only if $A*I\subseteq I$.
\end{lem}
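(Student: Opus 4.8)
The plan is to prove the equivalence in Lemma~\ref{lem:A*I} by unwinding the definition of a left ideal in terms of the $\lambda$-action and rewriting the condition $\lambda_a(I)\subseteq I$ using the operation $*$. Recall that $I$ being a left ideal means two things: that $I$ is a subgroup of $(A,+)$, and that $\lambda_a(I)\subseteq I$ for every $a\in A$. Since the hypothesis already assumes $I$ is an additive subgroup, the whole content of the lemma is the equivalence of the two conditions $\lambda_a(I)\subseteq I$ for all $a$ and $A*I\subseteq I$, for a fixed additive subgroup $I$.

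For the forward direction, I would assume $I$ is a left ideal and show $A*I\subseteq I$. By the defining relation $a*b=\lambda_a(b)-b$, for $a\in A$ and $b\in I$ we have $a*b=\lambda_a(b)-b$; since $\lambda_a(b)\in I$ (because $I$ is a left ideal) and $-b\in I$ (because $I$ is an additive subgroup), the sum $\lambda_a(b)-b$ lies in $I$. As $A*I$ is by definition the additive subgroup generated by such elements $a*b$, and $I$ is already an additive subgroup containing all the generators, we conclude $A*I\subseteq I$.

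For the converse, assume $I$ is an additive subgroup with $A*I\subseteq I$, and I would deduce $\lambda_a(I)\subseteq I$ for all $a\in A$. Fix $a\in A$ and $b\in I$. Solving the identity $a*b=\lambda_a(b)-b$ for $\lambda_a(b)$ gives $\lambda_a(b)=(a*b)+b$. Now $a*b\in A*I\subseteq I$ by hypothesis, and $b\in I$, so their additive sum lies in $I$. Hence $\lambda_a(b)\in I$, which is exactly $\lambda_a(I)\subseteq I$; combined with the standing assumption that $I$ is an additive subgroup, this shows $I$ is a left ideal.

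I expect no serious obstacle here: the proof is essentially the single algebraic manipulation $\lambda_a(b)=(a*b)+b$, read in both directions, together with the elementary closure properties of the additive subgroup $I$. The only point requiring a small amount of care is ensuring that we never need $I$ to be closed under the $\circ$-operation or normal in any sense --- only additive-subgroup closure is used, which is consistent with the remark in the excerpt that a left ideal is automatically a subgroup of the multiplicative group, so nothing beyond the stated hypotheses is required.
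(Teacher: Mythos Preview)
Your proof is correct and follows exactly the same approach as the paper's own proof: both directions use the single identity $\lambda_a(b)=a*b+b$ (equivalently $a*b=\lambda_a(b)-b$) together with additive closure of $I$. The paper's version is just more terse.
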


\begin{proof}
    Let $a\in A$ and $x\in I$. If $I$ is a
    left ideal, then $a*x=\lambda_a(x)-x\in I$. Conversely, if $A*I\subseteq
    I$, then $\lambda_a(x)=a*x+x\in I$.
\end{proof}

\begin{lem}
    \label{lem:I*A}
    Let $A$ be a skew left brace. A normal subgroup $I$ of the additive group of $A$
    is an ideal of $A$ if and only $\lambda_a(I)\subseteq I$ for all $a\in A$ and
    $I*A\subseteq I$.
\end{lem}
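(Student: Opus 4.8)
The plan is to unwind the definition of an ideal. Since $I$ is assumed to be a normal subgroup of $(A,+)$, the only remaining requirements for $I$ to be an ideal are that it be a left ideal, i.e. $\lambda_a(I)\subseteq I$ for all $a\in A$, and that it be normal in the multiplicative group, i.e. $a\circ I=I\circ a$ for all $a\in A$. The left-ideal condition is literally the first of the two displayed hypotheses, so the content of the lemma is the equivalence, under the standing assumptions, of multiplicative normality with $I*A\subseteq I$. Throughout I would freely use $a\circ b=a+\lambda_a(b)$, $a+b=a\circ\lambda_a^{-1}(b)$, the fact that $a\circ a'$ equals the additive identity $0$, and that $\lambda$ is a homomorphism $(A,\circ)\to\Aut(A,+)$, so that $\lambda_a^{-1}=\lambda_{a'}$; in particular, if $I$ is a left ideal then $\lambda_{a'}(I)\subseteq I$ as well.

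For the forward implication, assume $I$ is an ideal. Then it is a left ideal by definition, giving $\lambda_a(I)\subseteq I$. To see $I*A\subseteq I$, fix $x\in I$ and $a\in A$. Multiplicative normality gives $x\circ a\in I\circ a=a\circ I$, so $x\circ a=a\circ y$ for some $y\in I$, whence $x*a=-x+x\circ a-a=-x+(a+\lambda_a(y)-a)$. Here $\lambda_a(y)\in I$ because $I$ is a left ideal, and then $a+\lambda_a(y)-a\in I$ because $I$ is additively normal; adding $-x\in I$ shows $x*a\in I$.

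The substantive direction is the converse, and it is where I expect the only real difficulty. Assume $\lambda_a(I)\subseteq I$ and $I*A\subseteq I$; then $I$ is a left ideal and, by hypothesis, additively normal, so it remains to prove $a\circ I=I\circ a$. The plan is to show $a\circ I\circ a'\subseteq I$ for every $a\in A$. Given $y\in I$, I would first rewrite $y\circ a'=y+\lambda_y(a')=(y+y*a')+a'$ and set $w=y+y*a'$; then $w\in I$ because $y\in I$ and $y*a'\in I$ by $I*A\subseteq I$. Distributing over the sum $w+a'$ and using $a\circ a'=0$ gives
\[
a\circ y\circ a'=a\circ(w+a')=a\circ w-a+a\circ a'=a+\lambda_a(w)-a,
\]
which lies in $I$ since $\lambda_a(w)\in I$ and $I$ is additively normal. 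Thus $a\circ I\circ a'\subseteq I$; replacing $a$ by $a'$ yields $a'\circ I\circ a\subseteq I$, and conjugating this inclusion by $a$ gives the reverse containment, so $a\circ I\circ a'=I$, that is, $I\circ a=a\circ I$.

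I expect the main obstacle to be precisely this last computation. The tempting shortcut of proving $I\circ a=I+a$ by checking the easy inclusion $I\circ a\subseteq I+a$ and then matching cardinalities breaks down when $I$ is infinite, which is exactly the setting the paper cares about. The conjugation argument above sidesteps cardinality entirely, at the cost of requiring the two small structural facts $a\circ a'=0$ and $\lambda_{a'}=\lambda_a^{-1}$; the former reflects that the additive and multiplicative identities of a skew left brace coincide, and the latter is that $\lambda$ is a group homomorphism. Making sure both are available is the one point I would be careful to record before running the argument.
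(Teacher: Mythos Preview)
Your proof is correct and follows essentially the same route as the paper. Both arguments hinge on the identity $a\circ x\circ a'=a+\lambda_a(x+x*a')-a$; you reach it via the compatibility axiom applied to $a\circ(w+a')$ with $w=x+x*a'$, while the paper expands $a+\lambda_a(x\circ a')$ and regroups using $\lambda_a(a')=-a$, but the content and the conclusion are identical.
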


\begin{proof}
    Let $x\in I$ and $a\in A$.  Assume first that $I$ is invariant under the
    action of $\lambda$ and that $I*A\subseteq I$. Then
    \begin{equation}
        \label{eq:trick_lambda}
        \begin{aligned}
        a\circ x\circ a' &=a+\lambda_a(x\circ a')\\
        &=a+\lambda_a(x+\lambda_x(a'))
        =a+\lambda_a(x)+\lambda_a\lambda_x(a')+a-a\\
        &=a+\lambda_a(x+\lambda_x(a')-a')-a
        =a+\lambda_a(x+x*a')-a.
    \end{aligned}
    \end{equation}
    and hence $I$ is an ideal.

    Conversely, assume that $I$ is an ideal. Then $I*A\subseteq I$ since
    \begin{align*}
        x*a&=-x+x\circ a-a\\
        &=-x+a\circ(a'\circ x\circ a)-a
        =-x+a+\lambda_a(a'\circ x\circ a)-a\in I.
    \end{align*}
    This completes the proof.
\end{proof}

The socle of a skew left brace $A$ is defined as
\[
        \Soc (A)=\{ x\in A\colon x\circ a=x+a \text{ and } x+a=a+x, \text{ for all } a\in A\}.
\]
Clearly $\Soc(A)=\ker(\lambda)\cap Z(A,+)$. In \cite[Lemma~2.5]{MR3647970} it
is proved that $\Soc(A)$ is an ideal of $A$.
\begin{lem}
    \label{lem:socle}
    Let $A$ be a skew left brace and $a\in\Soc(A)$. Then $b+b\circ a=b\circ a+b$ and
    $\lambda_b(a)=b\circ a\circ b'$ for all $b\in A$.
\end{lem}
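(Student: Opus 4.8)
The plan is to combine the two defining properties of a socle element with the fact, recalled immediately before the statement, that $\Soc(A)$ is an ideal. Write $a\in\Soc(A)=\ker(\lambda)\cap Z(A,+)$ and record the two consequences that drive the argument: first, $\lambda_a=\id$, so that $a*c=\lambda_a(c)-c=0$ for every $c\in A$; second, $a$ is central in the additive group. The one extra ingredient I need is that $\lambda_b(a)$ is \emph{itself} additively central. This follows because $\Soc(A)$ is an ideal, hence in particular a left ideal, so $\lambda_b(\Soc(A))\subseteq\Soc(A)\subseteq Z(A,+)$; thus $\lambda_b(a)$ commutes additively with $b$ (indeed with everything).

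For the first identity I would substitute $b\circ a=b+\lambda_b(a)$ (the formula $x\circ y=x+\lambda_x(y)$ from the Remark). Then $b+b\circ a=b+b+\lambda_b(a)$, while $b\circ a+b=b+\lambda_b(a)+b$; since $\lambda_b(a)$ is additively central it commutes with $b$, and the two expressions coincide.

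For the second identity I would start from equation~\eqref{eq:trick_lambda}, which was derived in full generality inside the proof of Lemma~\ref{lem:I*A}. Applying it with conjugator $b$ and conjugated element $a$ gives
\[
b\circ a\circ b'=b+\lambda_b(a+a*b')-b.
\]
Because $\lambda_a=\id$ we have $a*b'=\lambda_a(b')-b'=0$, so the right-hand side collapses to $b+\lambda_b(a)-b$. Using once more that $\lambda_b(a)$ is central in $(A,+)$, this equals $\lambda_b(a)$, which is exactly the claim.

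There is no serious obstacle: the whole argument is a short manipulation. The only point that must not be overlooked is the justification that $\lambda_b(a)\in Z(A,+)$. It is tempting to invoke only the centrality of $a$ itself, but conjugation and the $\lambda$-action move $a$ to $\lambda_b(a)$, and it is the centrality of \emph{this} element—supplied precisely by the left-ideal property of $\Soc(A)$—that makes both reductions go through.
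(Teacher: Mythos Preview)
Your proof is correct, but it takes a different route from the paper's. The paper never invokes the left-ideal property of $\Soc(A)$; it works only with the centrality of $a$ itself (not of $\lambda_b(a)$). For the first identity the paper applies $b'\circ(-)$ to both sides, obtaining $b'\circ(b\circ a+b)=a-b'$ and $b'\circ(b+b\circ a)=-b'+a$, which agree because $a\in Z(A,+)$. For the second identity the paper computes directly
\[
b\circ a\circ b'=b\circ(a\circ b')=b\circ(a+b')=b\circ a-b=-b+b\circ a=\lambda_b(a),
\]
using $\lambda_a=\id$ in the second step and the already-proved first identity in the fourth. Your argument instead relies on $\lambda_b(a)\in Z(A,+)$, which you correctly justify via $\lambda_b(\Soc(A))\subseteq\Soc(A)$; this is a legitimate shortcut given the cited result that $\Soc(A)$ is an ideal, and it lets you reuse~\eqref{eq:trick_lambda} rather than computing from scratch. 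The paper's approach is marginally more self-contained, since it avoids appealing to the (externally proved) ideal property, whereas yours is more modular, fitting the computation into formulas already on the page.
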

\begin{proof}
    Let $b\in A$. Since
    $b'\circ (b\circ a+b)=a-b'$ and
    $b'\circ (b+b\circ a)=-b'+a$, the first claim follows since
    $a\in Z(A,+)$.
    To prove the second claim one computes:
    \[
    b\circ a\circ b'=b\circ (a\circ b')=b\circ (a+b')=b\circ a-b=-b+b\circ
    a=\lambda_b(a),
    \]
    and the lemma is proved.
\end{proof}

\section{Left and right nilpotent skew left braces}
\label{nilpotency}

Let $A$ be a skew left brace. Following~\cite{MR2278047} one defines
$A^{(1)}=A$ and for $n\geq1$
\begin{align*}
    & A^{(n+1)}=A^{(n)}*A=\langle x*a: x\in A^{(n)},\,a\in A\rangle_+,
\end{align*}
where $\langle X\rangle_+$ denotes the subgroup of the additive group of $A$
generated by the subset $X$.  The series $A^{(1)}\supseteq A^{(2)}\supseteq
A^{(3)}\supseteq\cdots\supseteq A^{(n)}\cdots$ is the \emph{right series} of
$A$.

\begin{pro}
    \label{pro:right_series}
        Let $A$ be a skew left brace. Each $A^{(n)}$ is an ideal of $A$.
\end{pro}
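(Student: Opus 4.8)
The plan is to argue by induction on $n$. The base case $A^{(1)}=A$ is clear, since $A$ is an ideal of itself. For the inductive step I assume that $I:=A^{(n)}$ is an ideal and show that $A^{(n+1)}=I*A$ is an ideal. By construction $A^{(n+1)}$ is already an additive subgroup, so by Lemma~\ref{lem:I*A} it suffices to verify three things: that $A^{(n+1)}$ is normal in $(A,+)$, that it is $\lambda$-invariant (i.e.\ a left ideal), and that $A^{(n+1)}*A\subseteq A^{(n+1)}$. Throughout I will use that, because $I$ is an ideal, one has $I*A\subseteq I$ (Lemma~\ref{lem:I*A}) and $I$ is normal in both $(A,+)$ and $(A,\circ)$. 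In particular $A^{(n+1)}=I*A\subseteq I$.

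The last of the three conditions is then immediate: any $y\in A^{(n+1)}$ lies in $I$, so $y*c\in I*A=A^{(n+1)}$ for every $c\in A$, and since such elements generate $A^{(n+1)}*A$ additively we get $A^{(n+1)}*A\subseteq A^{(n+1)}$. For additive normality I only need to conjugate the additive generators $x*a$ (with $x\in I$, $a\in A$), since conjugation by a fixed element is an additive automorphism. Here the identity $x*(b+a)=x*b+b+x*a-b$ does the work: rearranging it gives $b+(x*a)-b=-(x*b)+x*(b+a)$, and both $x*b$ and $x*(b+a)$ are generators of $I*A$. Hence $b+(x*a)-b\in A^{(n+1)}$.

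The $\lambda$-invariance is the main obstacle, and it is where the order of the argument matters, since I will need the additive normality just established. Again it is enough to treat a generator $x*b$. A short computation using $\lambda_{a\circ x}=\lambda_a\lambda_x$ yields $\lambda_a(x*b)=(a\circ x)*b-a*b$. Now I exploit that $I$ is normal in the multiplicative group to write $a\circ x=x_0\circ a$ with $x_0=a\circ x\circ a'\in I$, and then apply the identity $(x_0\circ a)*b=x_0*(a*b)+a*b+x_0*b$. Substituting gives
\[
\lambda_a(x*b)=x_0*(a*b)+\bigl(a*b+x_0*b-a*b\bigr).
\]
The term $x_0*(a*b)$ is a generator of $I*A$ because $x_0\in I$; the bracketed term is the additive conjugate of the generator $x_0*b$ by $a*b$, which lies in $A^{(n+1)}$ precisely by the additive normality proved above. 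Thus $\lambda_a(x*b)\in A^{(n+1)}$, and since $\lambda_a$ is an additive homomorphism this extends to $\lambda_a(A^{(n+1)})\subseteq A^{(n+1)}$. With all three conditions in hand, Lemma~\ref{lem:I*A} shows that $A^{(n+1)}$ is an ideal, completing the induction.

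The delicate point to get right is the non-commutativity of $(A,+)$: one cannot cancel the two copies of $a*b$ in the displayed formula, and it is exactly this leftover additive conjugation that forces additive normality to be proved before $\lambda$-invariance. I expect the bookkeeping in the step $a\circ x=x_0\circ a$ — that is, remembering to pass to the conjugate $x_0$ so that identity $(x_0\circ a)*b=x_0*(a*b)+a*b+x_0*b$ can be applied with a factor from $I$ on the left — to be the crux; the rest reduces to the two structural identities for $*$ together with the inductive hypothesis.
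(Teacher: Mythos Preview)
Your proof is correct and follows essentially the same inductive strategy as the paper: prove additive normality of $A^{(n+1)}$ via the identity $x*(b+a)=x*b+b+x*a-b$, then $\lambda$-invariance, then conclude ideal-ness (you via Lemma~\ref{lem:I*A}, the paper via the explicit formula~\eqref{eq:trick_lambda}). The one noteworthy difference is in the $\lambda$-invariance step: where you expand $(a\circ x)*b$ and end up needing additive normality to handle the leftover conjugate $a*b+x_0*b-a*b$, the paper instead writes $\lambda_{a\circ x}=\lambda_{a\circ x\circ a'}\lambda_a$ directly, obtaining the cleaner single-generator identity
\[
\lambda_a(x*b)=(a\circ x\circ a')*\lambda_a(b),
\]
which lands in $A^{(n+1)}$ immediately without appealing to additive normality. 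Both routes work; the paper's identity is worth remembering since it decouples the two steps and is reused later (e.g.\ in Proposition~\ref{pro:left_series}).
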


\begin{proof}
    We want to prove that for each $n\in\N$, $A^{(n)}$ is a normal subgroup of
    $(A,+)$, that $\lambda_a(A^{(n)})\subseteq A^{(n)}$ for all $a\in A$ and
    that $A^{(n)}$ is a normal subgroup of $(A,\circ)$. We proceed by induction on $n$.
    The case $n=1$ is trivial. We assume that the claim is true for some $n\geq1$.  We first prove that
    $A^{(n+1)}$ is a normal subgroup of $(A,+)$. Let $a,b\in A$ and $x\in
    A^{(n)}$. Then $a+x*b-a\in A^{(n+1)}$ since
    \begin{align*}
        a+x*b-a&=a+\lambda_x(b)-b-a\\
        &=a+\lambda_x(b)-(a+b)
        =a+\lambda_x(-a+a+b)-(a+b)\\
        &=a+\lambda_x(-a)+\lambda_x(a+b)-(a+b)
        =-x*a+x*(a+b).
    \end{align*}
    Now we prove that $A^{(n+1)}$ is an ideal.
    Let $a,b\in A$ and $x\in A^{(n)}$. Then
\begin{equation}
    \label{eq:another_trick}
    \begin{aligned}
        \lambda_a(x*b)&=\lambda_a(\lambda_x(b)-b)
        =\lambda_a\lambda_x(b)-\lambda_a(b)\\
        &=\lambda_{a\circ x\circ a'}(\lambda_a(b))-\lambda_a(b)
        =(a\circ x\circ a')*\lambda_a(b)\in A^{(n+1)}
    \end{aligned}
\end{equation}
since $a\circ x\circ a'\in A^{(n)}$ by the inductive hypothesis.
From this it immediately follows that
$\lambda_a(A^{(n+1)})\subseteq A^{(n+1)}$.  Now let $y\in
A^{(n+1)}$. By using~\eqref{eq:trick_lambda} one obtains that
$a\circ y\circ a' =a+\lambda_a(y+y*a')-a\in A^{(n+1)}$. Thus
the result follows by induction.
\end{proof}

Let $A$ be a skew left brace. Following~\cite{MR2278047} one defines
$A^1=A$ and for $n\geq1$
\begin{align*}
    & A^{n+1}=A*A^{n}=\langle a*x: a\in A,\,x\in A^{n}\rangle_+.
\end{align*}
The series $
A^1\supseteq A^2\supseteq A^3\supseteq\cdots\supseteq A^n\supseteq\cdots$
is the \emph{left series} of $A$.

\begin{pro}
    \label{pro:left_series}
    Let $A$ be a skew left brace. Each $A^{n}$ is a left ideal of $A$.
\end{pro}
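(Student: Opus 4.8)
The plan is to proceed by induction on $n$. For the base case $A^1=A$ is the whole skew left brace, which is trivially a left ideal. For the inductive step I will assume that $A^n$ is a left ideal and aim to prove the same for $A^{n+1}=A*A^n$. By construction $A^{n+1}$ is an additive subgroup of $A$, so the only thing left to establish is that it is invariant under the $\lambda$-action; equivalently, by Lemma~\ref{lem:A*I}, that $A*A^{n+1}\subseteq A^{n+1}$. I will verify the $\lambda$-invariance directly, because $\lambda_a$ is an automorphism of $(A,+)$ and therefore interacts cleanly with the additive generators of $A^{n+1}$.

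The key observation is that the identity derived in~\eqref{eq:another_trick} is in fact completely general: for all $a,b,x\in A$ one has $\lambda_a(b*x)=(a\circ b\circ a')*\lambda_a(x)$. Indeed, $\lambda_a(b*x)=\lambda_a(\lambda_b(x)-x)=\lambda_a\lambda_b(x)-\lambda_a(x)$, and since $\lambda$ is a homomorphism $(A,\circ)\to\Aut(A,+)$ one has $\lambda_a\lambda_b=\lambda_{a\circ b}=\lambda_{a\circ b\circ a'}\lambda_a$, which yields the claimed formula. Applying this with $b\in A$ arbitrary and $x\in A^n$, I obtain that $\lambda_a(b*x)=(a\circ b\circ a')*\lambda_a(x)$. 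By the inductive hypothesis $A^n$ is a left ideal, so $\lambda_a(x)\in A^n$; since $a\circ b\circ a'\in A$, the right-hand side lies in $A*A^n=A^{n+1}$. Thus $\lambda_a$ sends every additive generator $b*x$ of $A^{n+1}$ into $A^{n+1}$.

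To finish I will extend from generators to all of $A^{n+1}$. A typical element of $A^{n+1}$ is a finite sum of generators $b*x$ and their additive inverses; because $\lambda_a$ is a homomorphism of $(A,+)$ it commutes with forming such sums and inverses (even though $(A,+)$ need not be abelian), so $\lambda_a(A^{n+1})\subseteq A^{n+1}$. This proves that $A^{n+1}$ is a left ideal and completes the induction.

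The step that deserves the most care, and the reason I route the argument through $\lambda_a$ rather than through the operation $a*(-)$ directly, is precisely the passage from generators to the whole subgroup. Since $a*(-)$ is not additive (one has $a*(y+z)=a*y+y+a*z-y$), a direct check of $A*A^{n+1}\subseteq A^{n+1}$ via Lemma~\ref{lem:A*I} would force me to keep track of these conjugation terms and to know beforehand that $A^{n+1}$ is additively normal. Working instead with the genuine automorphism $\lambda_a$ together with the general form of~\eqref{eq:another_trick} avoids this difficulty altogether.
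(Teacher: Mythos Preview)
Your proof is correct and follows essentially the same approach as the paper: induction on $n$, using the identity $\lambda_a(b*x)=(a\circ b\circ a')*\lambda_a(x)$ from~\eqref{eq:another_trick} together with the inductive hypothesis $\lambda_a(x)\in A^n$ to conclude $\lambda_a(b*x)\in A^{n+1}$, and then extending from generators via the fact that $\lambda_a\in\Aut(A,+)$. Your additional commentary on why routing through $\lambda_a$ rather than $a*(-)$ is the right move is accurate but not needed for the argument itself.
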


\begin{proof}
    We proceed by induction on $n$. The case $n=1$ is trivial, so we may assume
    that the result is true for some $n\geq1$. Let $a,b\in A$ and $x\in A^n$.
    By the inductive hypothesis, $\lambda_a(x)\in A^n$ and hence
    \[
        \lambda_a(b*x)=(a\circ b\circ a')*\lambda_a(x)\in A^{n+1},
    \]
where the equality follows by~\eqref{eq:another_trick}. This implies
that $\lambda_a(A^{n+1})\subseteq A^{n+1}$. Thus the result
follows by induction.
\end{proof}

The second term of the left series is particularly important:
\begin{pro}
 Let $A$ be a skew left brace. Then $A^2$ is the smallest ideal of $A$
 such that $A/A^2$ is a trivial skew left brace.
\end{pro}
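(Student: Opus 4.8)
The plan is to exploit the coincidence $A^2=A^{(2)}$. By definition $A^{(2)}=A^{(1)}*A=A*A$ and $A^2=A*A^1=A*A$, so the two notations denote the same additive subgroup. First I would invoke Proposition~\ref{pro:right_series} with $n=2$ to conclude at once that $A^2=A^{(2)}$ is an ideal of $A$; this spares us from checking normality in $(A,+)$, invariance under $\lambda$, and $A^2*A\subseteq A^2$ directly. In particular the quotient $A/A^2$ is a skew left brace and the operation $*$ descends to it via $\bar a*\bar b=\overline{a*b}$.

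Next I would record the elementary reformulation of triviality: a skew left brace $B$ is trivial if and only if $a*b=0$ for all $a,b\in B$, since $a*b=-a+a\circ b-b$ vanishes precisely when $a\circ b=a+b$. Applying this to $B=A/A^2$, for any $a,b\in A$ the element $a*b$ lies in $A*A=A^2$ by construction, so $\bar a*\bar b=\overline{a*b}=0$ in $A/A^2$. Hence $A/A^2$ is a trivial skew left brace.

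For minimality, suppose $I$ is any ideal of $A$ for which $A/I$ is a trivial skew left brace. Since $*$ descends to $A/I$ and vanishes identically there, we obtain $\overline{a*b}=\bar a*\bar b=0$, that is $a*b\in I$, for all $a,b\in A$. As $A^2$ is by definition the additive subgroup generated by the elements $a*b$, it follows that $A^2\subseteq I$. Combined with the first two steps, this shows that $A^2$ is the smallest ideal of $A$ with trivial quotient.

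I expect no serious obstacle: the argument is essentially the observation that $*$ measures the failure of triviality and is additively generated exactly by $A^2$. The only points needing care are the compatibility of $*$ with passing to the quotient by an ideal, so that $\overline{a*b}=\bar a*\bar b$, and the identification $A^2=A^{(2)}$, which lets one borrow the ideal property from Proposition~\ref{pro:right_series} rather than reproving it.
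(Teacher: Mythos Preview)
Your proof is correct and follows essentially the same approach as the paper: both identify $A^2=A^{(2)}$ and invoke Proposition~\ref{pro:right_series} for the ideal property, then characterize triviality of $A/I$ by the vanishing of $a*b=\lambda_a(b)-b$ modulo $I$, which is equivalent to $A^2\subseteq I$. Your version is simply a more expanded rendering of the paper's argument.
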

\begin{proof}
    Since $A^2=A^{(2)}$, $A^2$ is an ideal by Proposition~\ref{pro:right_series}.
Let $I$ be an ideal of $A$ and $\pi\colon A\to A/I$ be the canonical map. Then
$A/I$ is trivial as a skew left brace if and only if $\lambda_a(b)-b\in I$ for all
$a,b\in A$. Since this condition is equivalent to $A^2\subseteq I$, the claim
follows.
\end{proof}

\begin{defn}
    A skew left brace $A$ is said to be \emph{right nilpotent}
    if $A^{(m)}=0$ for some $m\geq1$.
\end{defn}

\begin{lem}
    \label{lem:right_nilpotent:quotient}
    Let $f\colon A\to B$ be a surjective homomorphism of skew left braces. Then
    $f(A^{(k)})=B^{(k)}$ for all $k$. In particular, if $A$ is right nilpotent,
    then $B$ is right nilpotent.
\end{lem}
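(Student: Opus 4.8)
The plan is to establish $f(A^{(k)})=B^{(k)}$ by induction on $k$, after which the ``in particular'' statement is immediate: if $A$ is right nilpotent, say $A^{(m)}=0$, then $B^{(m)}=f(A^{(m)})=f(0)=0$, so $B$ is right nilpotent.

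Two elementary facts drive the whole argument, and I would record them first. Since $f$ is a homomorphism of skew left braces it respects both $+$ and $\circ$, and therefore it respects the derived operation $*$:
\[
f(a*b)=f(-a+a\circ b-b)=-f(a)+f(a)\circ f(b)-f(b)=f(a)*f(b)
\]
for all $a,b\in A$. Second, for any homomorphism of additive groups one has $f(\langle S\rangle_+)=\langle f(S)\rangle_+$ for every subset $S$, because the image of a generated subgroup is the subgroup generated by the images of the generators.

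For the base case, $f(A^{(1)})=f(A)=B=B^{(1)}$, using that $f$ is surjective. For the inductive step I would assume $f(A^{(k)})=B^{(k)}$ and apply the two facts above to the defining generating set of $A^{(k+1)}=\langle x*a:x\in A^{(k)},\,a\in A\rangle_+$, obtaining
\[
f(A^{(k+1)})=\langle f(x*a):x\in A^{(k)},\,a\in A\rangle_+=\langle f(x)*f(a):x\in A^{(k)},\,a\in A\rangle_+.
\]
As $x$ ranges over $A^{(k)}$ the element $f(x)$ ranges over $f(A^{(k)})=B^{(k)}$ by the inductive hypothesis, and as $a$ ranges over $A$ the element $f(a)$ ranges over $f(A)=B$ by surjectivity; hence the right-hand side equals $\langle y*b:y\in B^{(k)},\,b\in B\rangle_+=B^{(k)}*B=B^{(k+1)}$, which closes the induction.

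The argument is essentially bookkeeping and there is no real obstacle. The only point requiring attention is that surjectivity of $f$ is used twice—once directly to cover all of $B$ in the second factor of the generators, and once through the inductive hypothesis to cover all of $B^{(k)}$ in the first factor—so that the images of the generators of $A^{(k+1)}$ are precisely the generators of $B^{(k+1)}$, and not merely a subset of them.
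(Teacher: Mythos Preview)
Your proof is correct and follows exactly the same inductive approach as the paper, which simply writes $f(A^{(k+1)})=f(A^{(k)}*A)=f(A^{(k)})*f(A)=B^{(k)}*B=B^{(k+1)}$. You have merely spelled out in more detail the facts that $f$ preserves $*$ and that images of generated subgroups are generated by images, which the paper leaves implicit.
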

\begin{proof}
    We proceed by induction on $k$. The case $k=1$ is trivial. Let us assume that the result is valid for some $k\geq1$. Since
    $f(A^{(k)})=B^{(k)}$,
    \[
    f(A^{(k+1)})=f(A^{(k)}*A)=f(A^{(k)})*f(A)=B^{(k)}*B=B^{(k+1)}.
    \]
    From this the second claim follows.
\end{proof}
\begin{lem}
    \label{lem:right_nilpotent:sub}
    Let $A$ be a right nilpotent skew left brace and $B\subseteq A$ be a sub
    skew left brace. Then $B$ is right nilpotent.
\end{lem}
\begin{proof}
    By induction, $B^{(k)}\subseteq A^{(k)}$ for all $k$. Hence the claim
    follows.
\end{proof}

\begin{lem}
    \label{lem:right_nilpotent:x}
    Let $A_1,\dots,A_k$ be right nilpotent skew left braces. Then the direct product
    $A_1\times\cdots\times A_k$ is right nilpotent.
\end{lem}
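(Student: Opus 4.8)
The plan is to reduce the statement to the behaviour of the right series under surjective homomorphisms that was already recorded in Lemma~\ref{lem:right_nilpotent:quotient}, rather than computing the right series of the product from scratch. Write $A=A_1\times\cdots\times A_k$, and for each $i$ let $\pi_i\colon A\to A_i$ be the $i$-th canonical projection. The first step is the (routine) observation that each $\pi_i$ is a surjective homomorphism of skew left braces: both the additive operation $+$ and the multiplicative operation $\circ$ of a direct product are defined componentwise, so $\pi_i$ respects both and is visibly onto. I would state this without dwelling on it.

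With that in hand, the second step applies Lemma~\ref{lem:right_nilpotent:quotient} to $f=\pi_i$, yielding $\pi_i(A^{(n)})=A_i^{(n)}$ for every $n$ and every $i$. Since each $A_i$ is right nilpotent, there are positive integers $m_i$ with $A_i^{(m_i)}=0$; setting $m=\max\{m_1,\dots,m_k\}$ and using that the right series is a decreasing chain, one gets $A_i^{(m)}=0$ for all $i$ simultaneously. The final step is the intersection argument: for any $x\in A^{(m)}$ we have $\pi_i(x)\in\pi_i(A^{(m)})=A_i^{(m)}=0$ for every $i$, so all components of $x$ vanish and hence $x=0$. Thus $A^{(m)}=0$ and $A$ is right nilpotent.

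I do not expect a genuine obstacle along this route; the only things to verify are that the projections are skew left brace morphisms and that an element of a direct product with all projections trivial is itself trivial, both of which are immediate. For completeness I note the more hands-on alternative, an induction showing $A^{(n)}=A_1^{(n)}\times\cdots\times A_k^{(n)}$: here one uses that $x*a$ is computed componentwise together with $0*b=b*0=0$ (a consequence of $\lambda_0=\id$ and $\lambda_a(0)=0$) to embed each factor along its ``axis''. The only mildly delicate point in that version is checking that the additive subgroup generated by the componentwise products $(x_1*a_1,\dots,x_k*a_k)$ is exactly $A_1^{(2)}\times\cdots\times A_k^{(2)}$, which holds because a direct product of (not necessarily abelian) additive groups is generated by its axis subgroups. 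Since the projection argument bypasses this bookkeeping, I would present it as the proof.
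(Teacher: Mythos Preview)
Your projection argument is correct. The paper, however, takes exactly the route you relegate to the ``hands-on alternative'': it reduces to $k=2$ and observes that $(a,b)*(c,d)=(a*c,b*d)$, from which $A^{(n)}\subseteq A_1^{(n)}\times A_2^{(n)}$ is immediate and the conclusion follows. Your approach trades this one-line computation for an appeal to Lemma~\ref{lem:right_nilpotent:quotient}; this is slightly more structural and sidesteps the (admittedly easy) bookkeeping about generating sets, but the paper's version is more self-contained and does not rely on the earlier lemma. Either is perfectly adequate here.
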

\begin{proof}
    It is enough to prove the lemma in the case where $k=2$. This case is
    trivial since $(a,b)*(c,d)=(a*c,b*d)$.
\end{proof}

\begin{thm}
    \label{thm:IcapSoc}
    Let $A$ be a right nilpotent skew left brace of nilpotent type
    and $I$ be a non-zero ideal of $A$. Then $I\cap\Soc(A)\ne0$.
\end{thm}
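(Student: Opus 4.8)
The plan is to combine the two hypotheses. Right nilpotency will supply nonzero elements of $I$ that lie in $\ker(\lambda)$, while nilpotency of the additive group will push such elements into the additive centre $Z(A,+)$. Since $\Soc(A)=\ker(\lambda)\cap Z(A,+)$, it suffices to land a single nonzero element of $I$ in both subgroups at once.

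First I would isolate a convenient ``bottom layer'' of $I$ with respect to the right series. As $A$ is right nilpotent we have $A^{(m)}=0$ for some $m$, whereas $I\cap A^{(1)}=I\neq0$; hence there is a largest index $n$ with $J:=I\cap A^{(n)}\neq0$, and for this $n$ we have $I\cap A^{(n+1)}=0$. I claim $J\subseteq\ker(\lambda)$. Indeed, take $x\in J$ and $a\in A$. Since $x\in A^{(n)}$ we have $x*a\in A^{(n)}*A=A^{(n+1)}$, and since $I$ is an ideal Lemma~\ref{lem:I*A} gives $x*a\in I$; therefore $x*a\in I\cap A^{(n+1)}=0$. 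As $x*a=\lambda_x(a)-a$, this forces $\lambda_x(a)=a$ for every $a$, i.e. $\lambda_x=\id$, so that $x\in\ker(\lambda)$.

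Next I would feed $J$ into the additive picture. Being the intersection of the two ideals $I$ and $A^{(n)}$ (the latter by Proposition~\ref{pro:right_series}), $J$ is itself an ideal; in particular $J$ is a nonzero normal subgroup of $(A,+)$. Because $A$ is of nilpotent type, $(A,+)$ is a nilpotent group, and here I would invoke Hirsch's theorem: a nonzero normal subgroup of a nilpotent group meets the centre nontrivially. This yields $J\cap Z(A,+)\neq0$. Any nonzero element of $J\cap Z(A,+)$ then lies in $\ker(\lambda)\cap Z(A,+)=\Soc(A)$ (using $J\subseteq\ker(\lambda)$) and in $J\subseteq I$, whence $0\neq J\cap Z(A,+)\subseteq I\cap\Soc(A)$, which is the assertion.

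The descending-chain bookkeeping and the identity $x*a=\lambda_x(a)-a$ are routine, so the only genuine external ingredient, and the step I would guard most carefully, is the use of Hirsch's theorem. I would either cite it or reprove it inside $(A,+)$ by the usual argument: set $J_0=J$ and $J_{i+1}=\langle x+a-x-a : x\in J_i,\ a\in A\rangle_+$; normality of $J$ makes the chain descend, nilpotency of $(A,+)$ forces $J_i=0$ for large $i$, and the last nonzero term satisfies $x+a=a+x$ for all $a\in A$, hence is a nonzero subgroup of $J\cap Z(A,+)$.
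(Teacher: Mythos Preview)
Your proof is correct and follows essentially the same strategy as the paper's: combine the descending right series with Hirsch's theorem for nilpotent groups to land in $\ker(\lambda)\cap Z(A,+)=\Soc(A)$. The only cosmetic difference is the order of the two steps: you first pick the maximal $n$ with $I\cap A^{(n)}\neq0$ to obtain $J\subseteq\ker(\lambda)$ and then apply Hirsch to $J$, whereas the paper first applies Hirsch to each $I\cap A^{(k)}$ (noting these are normal in $(A,+)$) to see that $I\cap A^{(k)}\cap Z(A,+)\neq0$ precisely when $I\cap A^{(k)}\neq0$, and then picks the maximal such index to conclude $(I\cap A^{(m)}\cap Z(A,+))*A\subseteq I\cap A^{(m+1)}=0$.
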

\begin{proof}
    Since $(A,+)$ is nilpotent and each $I\cap A^{(k)}$ is a normal subgroup of $(A,+)$, it follows
    from~\cite[5.2.1]{MR1357169} that $I\cap A^{(k)}\cap Z(A,+)\ne0$ whenever
    $I\cap A^{(k)}\ne0$.
    Let
    $m=\max\{k\in\N:I\cap A^{(k)}\cap Z(A,+)\ne 0\}$.
    Since
    \[
        (I\cap A^{(m)}\cap Z(A,+))*A\subseteq I\cap (A^{(m)}*A)=I\cap A^{(m+1)}=0,
    \]
    it follows that $I\cap A^{(m)}\cap Z(A,+)\subseteq I\cap \Soc(A)$.
\end{proof}

\begin{cor}
    \label{cor:nonzero_socle}
    Let $A$ be a non-zero right nilpotent skew left brace of nilpotent type.
    Then $\Soc(A)\ne0$.
\end{cor}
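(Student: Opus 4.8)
The plan is to obtain this corollary as an immediate specialization of Theorem~\ref{thm:IcapSoc}. That theorem asserts that for a right nilpotent skew left brace $A$ of nilpotent type, every non-zero ideal $I$ of $A$ meets the socle non-trivially, i.e.\ $I\cap\Soc(A)\ne0$. The obvious move is to take $I=A$ itself.

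First I would check that $A$ qualifies as an ideal of itself, so that the theorem is applicable with this choice. This is immediate from the definition: $A$ is a subgroup of its own additive group, $\lambda_a(A)\subseteq A$ for all $a\in A$, and trivially $a\circ A=A=A\circ a$ and $a+A=A=A+a$. Thus $A$ is an ideal of $A$, and by hypothesis it is non-zero.

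Applying Theorem~\ref{thm:IcapSoc} with $I=A$ then yields $A\cap\Soc(A)\ne0$. Since $\Soc(A)\subseteq A$, we have $A\cap\Soc(A)=\Soc(A)$, and therefore $\Soc(A)\ne0$, which is the desired conclusion.

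I do not anticipate any genuine obstacle here: the corollary is a direct consequence of the theorem, and the only point requiring a moment's attention is the verification that the whole brace $A$ is a legitimate non-zero ideal to which the theorem may be applied. All the substantive work—using nilpotency of $(A,+)$ together with right nilpotency to force a non-trivial intersection with the centre, and hence with the socle—has already been carried out in the proof of Theorem~\ref{thm:IcapSoc}.
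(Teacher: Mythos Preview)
Your proof is correct and follows exactly the same approach as the paper, which simply notes that the corollary follows directly from Theorem~\ref{thm:IcapSoc}. Your additional verification that $A$ is an ideal of itself is a harmless elaboration of what the paper leaves implicit.
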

\begin{proof}
    It follows directly from Theorem~\ref{thm:IcapSoc}
\end{proof}
\begin{cor}
    Let $A$ be a right nilpotent skew left brace of nilpotent type and $I$ be a
    minimal ideal of $A$. Then $I\subseteq\Soc(A)$.
\end{cor}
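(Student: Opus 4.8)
The plan is to combine Theorem~\ref{thm:IcapSoc} with the minimality hypothesis on $I$. Since $A$ is right nilpotent of nilpotent type and $I$ is a non-zero ideal, the hypotheses of Theorem~\ref{thm:IcapSoc} are met verbatim, so I would first invoke it to conclude that $I\cap\Soc(A)\ne0$. The remaining task is to upgrade this intersection from a non-zero subgroup to a non-zero \emph{ideal} contained in $I$, after which minimality closes the argument immediately.

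The key step is therefore to verify that $I\cap\Soc(A)$ is an ideal of $A$, for which I would use the characterization in Lemma~\ref{lem:I*A}. Recall that $\Soc(A)$ is an ideal by \cite[Lemma~2.5]{MR3647970}, and $I$ is an ideal by hypothesis. Then $I\cap\Soc(A)$ is a normal subgroup of $(A,+)$, being the intersection of two such subgroups; it is stable under every $\lambda_a$, since $\lambda_a(x)$ lies in both $I$ and $\Soc(A)$ whenever $x$ does (both are left ideals); and it satisfies $(I\cap\Soc(A))*A\subseteq (I*A)\cap(\Soc(A)*A)\subseteq I\cap\Soc(A)$ because each generator $x*a$ with $x\in I\cap\Soc(A)$ lies in both $I*A\subseteq I$ and $\Soc(A)*A\subseteq\Soc(A)$. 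By Lemma~\ref{lem:I*A} these three facts show that $I\cap\Soc(A)$ is an ideal of $A$.

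Finally, $I\cap\Soc(A)$ is a non-zero ideal of $A$ contained in $I$, so the minimality of $I$ forces $I\cap\Soc(A)=I$, that is, $I\subseteq\Soc(A)$. I do not anticipate any serious obstacle: the whole argument is a short deduction from Theorem~\ref{thm:IcapSoc}, and the only point requiring a moment's care is the routine verification that the intersection of two ideals is again an ideal, which the criterion of Lemma~\ref{lem:I*A} handles cleanly.
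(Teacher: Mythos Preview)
Your proposal is correct and follows exactly the paper's approach: invoke Theorem~\ref{thm:IcapSoc} to get $I\cap\Soc(A)\ne0$, observe that this intersection is an ideal of $A$ contained in $I$, and conclude by minimality. The only difference is that you spell out, via Lemma~\ref{lem:I*A}, the routine fact that an intersection of ideals is an ideal, whereas the paper treats this as evident.
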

\begin{proof}
    Since $I\cap\Soc(A)$ is a non-zero ideal of $A$ by Theorem~\ref{thm:IcapSoc},
    $I\cap\Soc(A)=I$ by the minimality of $I$.
\end{proof}

\begin{defn}
    Let $A$ be a skew left brace. A \emph{$s$-series} of $A$ is a sequence
    \[
        A=I_0\supseteq I_1\supseteq I_2\supseteq\cdots\supseteq I_n=0
    \]
    of ideals of $A$ such that $I_{j-1}/I_j\subseteq \Soc(A/I_j)$ for each
    $j\in\{1,\dots,n\}$.
\end{defn}

\begin{rem}
Let $A$ be a left brace.  Rump in \cite{MR2278047} defined the socle
series $\Soc_n(A)$ of $A$ as follows: $\Soc_0(A)=0$ and, for $n\geq
1$,
\[
\Soc_n(A)=\{ x\in A\colon x*y\in
\Soc_{n-1}(A) \}.
\]
There are examples of nonzero left braces $A$ such that
$\Soc_n(A)=0$ for all positive integers $n$.
\end{rem}

\begin{defn}\label{socn}
Let $A$ be a skew left brace. We define
$\Soc_0(A)=0$ and, for $n\geq 1$, $\Soc_n(A)$ is the ideal of $A$
containing $\Soc_{n-1}(A)$ such that
\[
    Soc_n(A)/\Soc_{n-1}(A)=\Soc(A/\Soc_{n-1}(A)).
\]
\end{defn}

Note that this definition coincides with the
definition of Rump for left braces.

\begin{lem}
    \label{lem:socle_series}
    Let $A$ be a skew left brace and let $A=I_0\supseteq I_1\supseteq
    I_2\supseteq\cdots\supseteq I_n=0$ be a $s$-series for $A$. Then
    $A^{(i+1)}\subseteq I_i$ for all $i$.
\end{lem}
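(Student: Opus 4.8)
The plan is to convert the defining condition of an $s$-series into a concrete statement about the operation $*$, and then to run a short induction on $i$. The whole substance of the lemma lives in this translation; the induction itself is formal.

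First I would unpack what the hypothesis $I_{j-1}/I_j\subseteq\Soc(A/I_j)$ gives us. Since $\Soc(A/I_j)=\ker(\lambda)\cap Z(A/I_j,+)$, every coset $\bar x$ with $x\in I_{j-1}$ lies in $\ker(\lambda)$; that is, $\lambda_{\bar x}=\id$ in the quotient brace $A/I_j$. Writing bars for images under the canonical projection $A\to A/I_j$, this means $\overline{x*b}=\bar x*\bar b=\lambda_{\bar x}(\bar b)-\bar b=0$ for every $b\in A$, i.e. $x*b\in I_j$. As $I_j$ is an additive subgroup and $I_{j-1}*A$ is by definition the additive subgroup generated by the products $x*b$ with $x\in I_{j-1}$, this yields the key inclusion
\[
I_{j-1}*A\subseteq I_j\qquad(1\le j\le n).
\]
Note that only the $\ker(\lambda)$ part of the socle is used here; the centrality condition $Z(A,+)$ plays no role in this particular lemma.

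With this in hand the statement follows by induction on $i$. For the base case $i=0$ one has $A^{(1)}=A=I_0$. For the inductive step, suppose $A^{(i+1)}\subseteq I_i$. Since the assignment $X\mapsto X*A$ is monotone in $X$ (the generators $x*a$ of $X*A$ are among those of $X'*A$ whenever $X\subseteq X'$), the inductive hypothesis gives $A^{(i+1)}*A\subseteq I_i*A$, and combining this with the key inclusion for $j=i+1$ we obtain
\[
A^{(i+2)}=A^{(i+1)}*A\subseteq I_i*A\subseteq I_{i+1},
\]
which is exactly what is required.

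The main (and essentially only) obstacle is the first paragraph, where the abstract socle condition on the quotients $A/I_j$ is turned into the multiplicative condition $I_{j-1}*A\subseteq I_j$; once that translation is made, the induction uses nothing beyond the definition $A^{(i+2)}=A^{(i+1)}*A$ and the monotonicity of $*$ in its first argument. The one point I would verify in passing is that working inside $A/I_j$ is legitimate, which holds because each $I_j$ is an ideal and quotients of skew left braces by ideals are again skew left braces.
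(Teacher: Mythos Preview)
Your proof is correct and follows essentially the same approach as the paper: both extract from the socle condition the inclusion $I_{j-1}*A\subseteq I_j$ (the paper does this via $\pi(I_i)*\pi(A)=0$ inside the inductive step, you do it as a preliminary translation), and then both run the identical induction $A^{(i+2)}=A^{(i+1)}*A\subseteq I_i*A\subseteq I_{i+1}$. The only difference is organizational.
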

\begin{proof}
    We proceed by induction on $i$. The case $i=0$ is trivial, so let us assume
    that the result holds for some $i\geq 0$. Let $\pi\colon A\to A/I_{i+1}$ be the canonical map.
    Since $\pi(I_{i})\subseteq\Soc(A/I_{i+1})$,
    $\pi(I_{i}*A)=\pi(I_{i})*\pi(A)=0$ and hence $I_{i}*A\subseteq I_{i+1}$.
    The inductive hypothesis then implies that
    $A^{(i+2)}=A^{(i+1)}*A\subseteq I_{i}*A\subseteq I_{i+1}$. Thus
the result follows by induction.
\end{proof}

\begin{lem}
    \label{lem:socn}
Let $A$ be a skew left brace. Then $A$ admits a
$s$-series if and only if there exists a positive integer $n$ such
that $A=\Soc_n(A)$.
\end{lem}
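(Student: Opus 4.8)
The plan is to prove both implications by comparing an arbitrary $s$-series directly with the socle series of Definition~\ref{socn}, the decisive ingredient being that the socle is preserved by surjective homomorphisms of skew left braces. I would record this auxiliary fact first: if $f\colon B\to C$ is a surjective skew left brace homomorphism, then $f(\Soc(B))\subseteq\Soc(C)$. Indeed, writing $\Soc(B)=\ker(\lambda)\cap Z(B,+)$, for $x\in\Soc(B)$ and any $b\in B$ one has $\lambda^C_{f(x)}(f(b))=f(\lambda^B_x(b))=f(b)$, so surjectivity forces $\lambda^C_{f(x)}=\id$; likewise $f(x)+f(b)=f(x+b)=f(b+x)=f(b)+f(x)$ shows $f(x)\in Z(C,+)$. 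Applying this to the canonical projection $A/J\to A/K$ for ideals $J\subseteq K$ gives the monotonicity statement that $\Soc(A/J)$ is carried into $\Soc(A/K)$.

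For the implication that $A=\Soc_n(A)$ produces an $s$-series, I would simply reverse the socle series. Set $I_j=\Soc_{n-j}(A)$ for $0\le j\le n$. Each $\Soc_i(A)$ is an ideal of $A$ by Definition~\ref{socn}, so every $I_j$ is an ideal, and $I_0=\Soc_n(A)=A$ while $I_n=\Soc_0(A)=0$. The defining relation of the socle series yields
\[
I_{j-1}/I_j=\Soc_{n-j+1}(A)/\Soc_{n-j}(A)=\Soc(A/\Soc_{n-j}(A))=\Soc(A/I_j),
\]
so $A=I_0\supseteq\cdots\supseteq I_n=0$ is an $s$-series (with equality in the required inclusion).

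For the converse, suppose $A=I_0\supseteq I_1\supseteq\cdots\supseteq I_n=0$ is an $s$-series. I would prove by induction on $k$ that $I_{n-k}\subseteq\Soc_k(A)$ for $0\le k\le n$. The base case $k=0$ is $I_n=0=\Soc_0(A)$. For the inductive step, apply the $s$-series condition with $j=n-k$ to obtain $I_{n-k-1}/I_{n-k}\subseteq\Soc(A/I_{n-k})$. Since $I_{n-k}\subseteq\Soc_k(A)$ by the inductive hypothesis, the projection $A/I_{n-k}\to A/\Soc_k(A)$ is well defined and surjective, so by the monotonicity fact the image of any $x\in I_{n-k-1}$ lands in $\Soc(A/\Soc_k(A))=\Soc_{k+1}(A)/\Soc_k(A)$, whence $x\in\Soc_{k+1}(A)$ and $I_{n-k-1}\subseteq\Soc_{k+1}(A)$. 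Taking $k=n$ gives $A=I_0\subseteq\Soc_n(A)$, so $A=\Soc_n(A)$.

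I expect the only genuinely non-formal point to be the monotonicity of the socle under quotients; the remainder is bookkeeping with the indices. The subtlety to watch is that the projection $A/I_{n-k}\to A/\Soc_k(A)$ exists precisely because the inductive hypothesis supplies $I_{n-k}\subseteq\Soc_k(A)$, so the induction and the monotonicity step are interlocked and must be carried out in the stated order.
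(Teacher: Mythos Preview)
Your proof is correct and follows essentially the same approach as the paper: both directions are handled identically in structure, with the converse proved by the same induction showing $I_{n-k}\subseteq\Soc_k(A)$. The only difference is cosmetic: where the paper unpacks the socle condition into its two defining properties ($I_{n-j}*A\subseteq I_{n-j+1}$ and the additive commutator lying in $I_{n-j+1}$) and checks these against $\Soc_j(A)$ directly, you package this step as the monotonicity of $\Soc$ under surjective homomorphisms and apply it to the projection $A/I_{n-k}\to A/\Soc_k(A)$.
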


\begin{proof}
    Suppose that there exists a positive integer $n$ such
that $A=\Soc_n(A)$. Then \[A=\Soc_n(A)\supseteq
\Soc_{n-1}(A)\supseteq\cdots\supseteq \Soc_0(A)=0,\] is a
$s$-series.

Conversely, suppose that $A$ admits a $s$-series.
Let
$$A=I_0\supseteq I_1\supseteq I_2\supseteq\cdots\supseteq I_n=0$$
be a $s$-series of $A$. We shall prove that $I_{n-j}\subseteq
\Soc_{j}(A)$ by induction on $j$. For $j=0$, $I_n=0=\Soc_0(A)$.
Suppose that $j>0$ and $I_{n-j+1}\subseteq \Soc_{j-1}(A)$. Since
$I_{n-j}/I_{n-j+1}\subseteq \Soc(A/I_{n-j+1})$, $I_{n-j}*A\subseteq
I_{n-j+1}\subseteq \Soc_{j-1}(A)$, by the induction hypothesis.
Furthermore, for all $x\in A$ and all $y\in I_{n-j}$, $x+y-x-y\in
I_{n-j+1}\subseteq \Soc_{j-1}(A)$. Therefore
$I_{n-j}\subseteq\Soc_j(A)$. Hence $A=I_0=\Soc_n(A)$ and the result
follows.
\end{proof}

\begin{lem}
    \label{lem:right_nilpotent}
    A skew left brace of nilpotent type is right nilpotent if and only if it admits a $s$-series.
\end{lem}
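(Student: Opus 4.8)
The statement is a biconditional whose two directions are of very different difficulty, so I would treat them separately.

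For the easy direction, suppose $A$ admits an $s$-series $A=I_0\supseteq I_1\supseteq\cdots\supseteq I_n=0$. Then Lemma~\ref{lem:socle_series} gives $A^{(n+1)}\subseteq I_n=0$, so $A$ is right nilpotent. This direction uses neither the nilpotent-type hypothesis nor any termination argument. For the hard direction, assume $A$ is right nilpotent of nilpotent type. By Lemma~\ref{lem:socn} it suffices to produce an integer $N$ with $A=\Soc_N(A)$, since the socle series is then the required $s$-series. Fix $m$ with $A^{(m)}=0$ and let $c$ be the nilpotency class of $(A,+)$. The plan is to prove by induction on $j$ the containment $A^{(m-j)}\subseteq\Soc_{cj}(A)$; taking $j=m-1$ yields $A=A^{(1)}\subseteq\Soc_{c(m-1)}(A)$, so that $N=c(m-1)$ works. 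The base case $j=0$ is exactly $A^{(m)}=0=\Soc_0(A)$.

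The inductive step is the crux, and it is here that the additive nilpotency is used. Assuming $A^{(m-j)}\subseteq\Soc_{cj}(A)$, pass to the quotient $\bar A=A/\Soc_{cj}(A)$, which remains right nilpotent (Lemma~\ref{lem:right_nilpotent:quotient}) and of nilpotent type, and write $K$ for the image of the ideal $A^{(m-j-1)}$ in $\bar A$. Every element of $K$ has the form $\bar x$ with $x\in A^{(m-j-1)}$, and since $x*a\in A^{(m-j-1)}*A=A^{(m-j)}\subseteq\Soc_{cj}(A)$ for all $a$, we get $\bar x*\bar a=0$; hence $K\subseteq\ker\lambda(\bar A)$. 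I would then climb the additive upper central series $0=Z_0\subseteq Z_1\subseteq\cdots\subseteq Z_c=\bar A$ of $(\bar A,+)$ by an inner induction proving $K\cap Z_i\subseteq\Soc_i(\bar A)$. For $x\in K\cap Z_{i+1}$, the additive commutator $x+a-x-a$ lies in $Z_i$ (as $x\in Z_{i+1}$) and in $K$ (since $K$ is additively normal, being an ideal: $x+a-x-a=x-(a+x-a)$ with $a+x-a\in K$), hence in $K\cap Z_i\subseteq\Soc_i(\bar A)$ by the inner hypothesis; thus the image of $x$ is additively central modulo $\Soc_i(\bar A)$, while $x\in\ker\lambda(\bar A)$ forces $x*a\in\Soc_i(\bar A)$, so $x\in\Soc_{i+1}(\bar A)$. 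At $i=c$ this gives $K\subseteq\Soc_c(\bar A)$, which translates through the standard identity $\Soc_i(A/\Soc_{cj}(A))=\Soc_{cj+i}(A)/\Soc_{cj}(A)$ into $A^{(m-j-1)}\subseteq\Soc_{c(j+1)}(A)$, closing the outer induction.

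The main obstacle is precisely this termination in the possibly infinite setting. The condition $x*a=0$ recorded by the right series only controls $\ker\lambda$, whereas membership in $\Soc=\ker\lambda\cap Z(\cdot,+)$ additionally demands additive centrality; the double induction above, with the additive upper central series supplying the extra factor of $c$, is what upgrades right nilpotency to finite socle length. One could alternatively iterate Corollary~\ref{cor:nonzero_socle} to see that the socle series ascends strictly while $A/\Soc_n(A)\neq0$, but that observation alone does not bound $N$ when $A$ is infinite, so I would favour the explicit containment $A^{(m-j)}\subseteq\Soc_{cj}(A)$ as the engine of the proof.
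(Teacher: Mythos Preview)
Your argument is correct. Both directions are handled soundly: the easy direction via Lemma~\ref{lem:socle_series} matches the paper exactly, and your double induction for the hard direction goes through --- the key points that $K\subseteq\ker\lambda(\bar A)$, that $[x,a]_+\in K\cap Z_i$ for $x\in K\cap Z_{i+1}$, and that $\Soc_i(A/\Soc_{cj}(A))=\Soc_{cj+i}(A)/\Soc_{cj}(A)$ are all valid.

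The route, however, differs from the paper's. The paper inducts on the right nilpotency index $m$: assuming the result for $\bar A=A/A^{(m-1)}$ gives an $s$-series from $A$ down to $A^{(m-1)}$, and then it extends this series to $0$ by intersecting $A^{(m-1)}$ with the \emph{lower} central series of $(A,+)$, setting $I_{n+j-1}=A^{(m-1)}\cap\gamma^+_j(A)$ and verifying directly (via Lemma~\ref{lem:I*A}, using $A^{(m-1)}*A=0$) that these are ideals with $I_{n+j-1}/I_{n+j}\subseteq\Soc(A/I_{n+j})$. You instead route everything through the socle series and Lemma~\ref{lem:socn}, and use the \emph{upper} central series of the successive quotients $A/\Soc_{cj}(A)$. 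Your approach has the minor advantage of yielding the explicit bound $\Soc_{c(m-1)}(A)=A$ without assembling an $s$-series by hand; the paper's approach is a bit more concrete in that it names the terms of the $s$-series explicitly, at the cost of checking they are ideals. The two arguments are dual manifestations of the same idea: the step from $A^{(k)}$ to $A^{(k+1)}$ controls $\ker\lambda$, and the additive nilpotency supplies the missing centrality in at most $c$ further refinements.
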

\begin{proof}
    Let $A$ be a skew left brace {of nilpotent type}. If $A$ admits a $s$-series, then $A$ is right nilpotent by
    Lemma~\ref{lem:socle_series}.

Conversely, suppose that $A$ is right nilpotent.
There exists a positive integer such that $A^{(m)}=0$.
    We shall prove that $A$ admits a $s$-series by induction on $m$. For $m=1$, $A=A^{(1)}=0$ is a $s$-series. Suppose
    that $m>1$ and that the result is true for $m-1$. Consider
    $\bar A=A/A^{(m-1)}$. Since $\bar A^{(m-1)}=0$, by the induction
    hypothesis $\bar A$ admits a $s$-series. Thus there is a
    sequence
    \[
        A=I_0\supseteq I_1\supseteq I_2\supseteq\cdots\supseteq I_n=A^{(m-1)}
    \]
    of ideals of $A$ such that $I_{j-1}/I_j\subseteq \Soc(A/I_j)$ for each
    $j\in\{1,\dots,n\}$. Since $A^{(m)}=0$, we have that $A^{(m-1)}\subseteq
    \ker(\lambda)$. Since $A$ is of nilpotent type, there exists a
    positive integer $s$ such that $\gamma^+_s(A)=0$, where
    $\gamma^+_i(A)$ denotes the lower central series of the additive
    group of $A$, that is $\gamma^+_1(A)=A$ and $\gamma^+_{i+1}(A)=[A,\gamma^+_i(A)]_+$, for all positive integers $i$.
    Let $I_{n+j-1}=A^{(m-1)}\cap \gamma^+_{j}(A)$ for $j=1,\dots ,s$.
    Note that $I_{n+j-1}$ is a normal subgroup of the additive group
    of $A$ invariant by $\lambda_x$, for all $x\in A$, and
    $I_{n+j-1}* A=0$, for all $j=1,\dots , s$, because $A^{(m-1)}\subseteq
    \ker(\lambda)$. By Lemma~\ref{lem:I*A}, $I_{n+j-1}$ is an ideal
    of $A$, for all $j=1,\dots , s$.
    Note that $I_{n+j-1}/I_{n+j}\subseteq Z(A/I_{n+j},+)$, for all $j=1,\dots ,s-1$. Therefore, since
    $I_{n+j-1}\subseteq\ker(\lambda)$, we have that $I_{n+j-1}/I_{n+j}\subseteq \Soc(A/I_{n+j})$, for all $j=1,\dots ,s-1$.
    Hence
    \[
        A=I_0\supseteq I_1\supseteq I_2\supseteq\cdots\supseteq
        I_n=A^{(m-1)}\supseteq I_{n+1}\supseteq\cdots\supseteq
        I_{n+s-1}=0
    \]
    is a $s$-series of $A$, and the result follows by induction.
\end{proof}

\begin{pro}
    \label{thm:A/Soc}
    Let $A$ be a skew left brace such that $A/\Soc(A)$ is right nilpotent. Then $A$
    is right nilpotent.
\end{pro}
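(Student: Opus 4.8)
The plan is to exploit the fact, recorded just before the socle was introduced, that $\Soc(A)=\ker(\lambda)\cap Z(A,+)$, so in particular $\Soc(A)\subseteq\ker(\lambda)$. The whole argument will rest on the observation that once a term of the right series lands inside $\ker(\lambda)$, the next term vanishes. Indeed, if $x\in\ker(\lambda)$ then $\lambda_x=\id$, so for every $a\in A$ we have $x*a=\lambda_x(a)-a=a-a=0$.

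First I would transfer the hypothesis from the quotient back to $A$ using Lemma~\ref{lem:right_nilpotent:quotient}. Since $A/\Soc(A)$ is right nilpotent, there is a positive integer $m$ with $(A/\Soc(A))^{(m)}=0$. Applying Lemma~\ref{lem:right_nilpotent:quotient} to the canonical surjection $\pi\colon A\to A/\Soc(A)$ gives $\pi(A^{(m)})=(A/\Soc(A))^{(m)}=0$, which means exactly that $A^{(m)}\subseteq\Soc(A)$.

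Next I would combine this with the observation above. Because $A^{(m)}\subseteq\Soc(A)\subseteq\ker(\lambda)$, every $x\in A^{(m)}$ satisfies $x*a=0$ for all $a\in A$. Hence
\[
A^{(m+1)}=A^{(m)}*A=\langle x*a: x\in A^{(m)},\,a\in A\rangle_+=0,
\]
and therefore $A$ is right nilpotent.

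There is no real obstacle here: the only point that needs to be invoked carefully is the containment $\Soc(A)\subseteq\ker(\lambda)$, which is immediate from the stated description of the socle, together with the pull-back of right nilpotency through a quotient supplied by Lemma~\ref{lem:right_nilpotent:quotient}. One could alternatively phrase the conclusion via the $s$-series machinery, but the direct computation with $A^{(m+1)}$ is the cleanest route and avoids any nilpotent-type assumption on the additive group.
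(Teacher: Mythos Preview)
Your proof is correct and follows essentially the same approach as the paper: pull back the vanishing of $(A/\Soc(A))^{(m)}$ to obtain $A^{(m)}\subseteq\Soc(A)$, then use $\Soc(A)\subseteq\ker(\lambda)$ to deduce $A^{(m+1)}=A^{(m)}*A=0$. The paper is terser, simply noting $\Soc(A)*A=0$ without unpacking it, but the argument is identical.
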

\begin{proof}
    Note that $(A/\Soc(A))^{(k)}=0$ if and only if $A^{(k)}\subseteq\Soc(A)$ by
    the definition of the factor brace. Then $A^{(k+1)}=A^{(k)}*A\subseteq\Soc
    (A)*A=0$ as required.
\end{proof}

\begin{pro}
    Let $I$ be an ideal of a skew left brace $A$ such that $I\cap A^2=0$. Then $I$
    is a trivial skew left brace.
\end{pro}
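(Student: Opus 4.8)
The plan is to unwind the definition of \emph{trivial} into a statement about the operation $*$ and then show that $*$ vanishes on $I$. Recall that $I$, being an ideal, is in particular a sub skew left brace, so it carries a skew left brace structure and the assertion makes sense. By the identity $x\circ y=x+x*y+y$ (equivalently $x*y=-x+x\circ y-y$), we have $x\circ y=x+y$ if and only if $x*y=0$. Hence $I$ is trivial precisely when $x*y=0$ for all $x,y\in I$, and this is what I would aim to prove.

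So fix $x,y\in I$. The key is that the element $x*y$ lies in two subsets whose intersection is zero. On the one hand, since $x,y\in A$, the element $x*y$ is one of the generators of $A^2=A*A$, so $x*y\in A^2$. On the other hand, since $I$ is a left ideal, Lemma~\ref{lem:A*I} gives $A*I\subseteq I$; as $x\in A$ and $y\in I$, this yields $x*y\in I$. Combining the two, $x*y\in I\cap A^2=0$, whence $x*y=0$. As $x,y\in I$ were arbitrary, $I$ is trivial.

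There is essentially no obstacle here: the argument is a direct application of the definition of $A^2$ together with the left-ideal characterization of Lemma~\ref{lem:A*I}. The only point deserving a moment's care is the translation of ``trivial'' into the vanishing of $*$ on $I$ (via $x\circ y=x+x*y+y$), since the hypothesis $I\cap A^2=0$ is tailor-made to force exactly that vanishing. Note also that only the left-ideal part of the hypothesis is used, so the conclusion in fact holds for any left ideal $I$ with $I\cap A^2=0$.
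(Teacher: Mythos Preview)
Your proof is correct and follows essentially the same idea as the paper's: place $x*y$ inside $I\cap A^2=0$. The only minor difference is that the paper uses the containment $I*A\subseteq I$ (the ideal side, via Lemma~\ref{lem:I*A}) to obtain the slightly stronger conclusion $I\subseteq\ker\lambda$, whereas you use $A*I\subseteq I$ (only the left-ideal side, via Lemma~\ref{lem:A*I}); as you observe, this is enough and in fact shows the result holds for any left ideal with $I\cap A^2=0$.
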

\begin{proof}
    Since $I*A\subseteq I\cap A^2=0$, $I\subseteq\ker\lambda$. From this the
    claim follows.
\end{proof}

A skew left brace $A$ has \emph{finite multipermutation level} if
the sequence $S_n$ defined as $S_1=A$ and $S_{n+1}=S_n/\Soc(S_n)$
for $n\geq1$, reaches zero.

\begin{pro}\label{newmulti}
Let $A$ be a skew left brace. Then $A$ has finite
multipermutation level if and only if $A$ admits a $s$-series.
\end{pro}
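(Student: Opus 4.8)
The plan is to reduce both sides of the equivalence to the single intermediate condition ``$A=\Soc_m(A)$ for some positive integer $m$.'' One direction of this reduction is already available: by Lemma~\ref{lem:socn}, $A$ admits a $s$-series if and only if $A=\Soc_m(A)$ for some $m$. Hence it suffices to prove that $A$ has finite multipermutation level if and only if $A=\Soc_m(A)$ for some $m$, after which the proposition follows by combining this with Lemma~\ref{lem:socn}.

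The key step is to identify the terms of the multipermutation sequence $S_n$ with quotients of $A$ by the socle-series ideals. Concretely, I would prove by induction on $n\geq0$ that
\[
    S_{n+1}\cong A/\Soc_n(A).
\]
For $n=0$ this reads $S_1=A=A/\Soc_0(A)$, which holds since $\Soc_0(A)=0$. For the inductive step, assume $S_n\cong A/\Soc_{n-1}(A)$. Using the defining recursion $S_{n+1}=S_n/\Soc(S_n)$ together with Definition~\ref{socn}, which gives $\Soc(A/\Soc_{n-1}(A))=\Soc_n(A)/\Soc_{n-1}(A)$, the third isomorphism theorem for skew left braces applied to the nested ideals $\Soc_{n-1}(A)\subseteq\Soc_n(A)$ yields
\[
    S_{n+1}\cong\frac{A/\Soc_{n-1}(A)}{\Soc_n(A)/\Soc_{n-1}(A)}\cong A/\Soc_n(A),
\]
completing the induction.

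With this identification in hand the equivalence is immediate. The sequence $S_n$ reaches zero precisely when $S_N=0$ for some $N$, which by the displayed isomorphism is equivalent to $A/\Soc_{N-1}(A)=0$, i.e.\ to $A=\Soc_{N-1}(A)$. Thus $A$ has finite multipermutation level if and only if $A=\Soc_m(A)$ for some $m$, and Lemma~\ref{lem:socn} then translates this last condition into the existence of a $s$-series.

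I do not expect a genuine obstacle: the argument is essentially a careful unwinding of the two definitions through the socle series. The only points requiring minor care are the index bookkeeping in the induction (so that $S_{n+1}$, rather than $S_n$, matches $A/\Soc_n(A)$) and the implicit but standard use of the third isomorphism theorem for quotients of a skew left brace by nested ideals.
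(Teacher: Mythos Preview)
Your proposal is correct and follows essentially the same route as the paper: both arguments establish by induction that $S_{n+1}\cong A/\Soc_n(A)$, deduce that the multipermutation sequence reaches zero precisely when $A=\Soc_m(A)$ for some $m$, and then invoke Lemma~\ref{lem:socn} to pass to the existence of a $s$-series.
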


\begin{proof}
Let $S_1=A$ and $S_{n+1}=S_n/\Soc(S_n)$, for
$n\geq1$. We shall prove that $S_{n+1}\simeq A/\Soc_n(A)$, by
induction on $n$. For $n=0$, it is clear since $\Soc_0(A)=0$.
Suppose that $n>0$ and the result is true for $n-1$. Hence, by the
induction hypothesis,
\begin{align*}S_n&=S_{n-1}/\Soc(S_{n-1})
\simeq (A/\Soc_{n-1}(A))/\Soc(A/\Soc_{n-1}(A))\\
&=(A/\Soc_{n-1}(A))/(\Soc_n(A)/\Soc_{n-1}(A))\simeq
A/\Soc_n(A).\end{align*} Therefore $S_n=0$ if and only if
$A=\Soc_n(A)$. Now the result follows from Lemma~\ref{lem:socn}.
\end{proof}

\begin{thm}
\label{thm:mpl&right_nilpotent}
Let $A$ be a skew
left brace. Then $A$ has finite multipermutation level, if and only
if  $A$ is right nilpotent and $(A,+)$ is nilpotent.
\end{thm}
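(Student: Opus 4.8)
The plan is to route both implications through Proposition~\ref{newmulti}, which identifies finite multipermutation level with the existence of an $s$-series $A=I_0\supseteq I_1\supseteq\cdots\supseteq I_n=0$. So the whole theorem reduces to understanding what an $s$-series encodes.

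First I would treat the direction asserting that right nilpotency together with nilpotency of $(A,+)$ implies finite multipermutation level. These two hypotheses say precisely that $A$ is a right nilpotent skew left brace of nilpotent type, so Lemma~\ref{lem:right_nilpotent} supplies an $s$-series for $A$, and Proposition~\ref{newmulti} then upgrades this to finite multipermutation level. This direction is essentially a bookkeeping combination of the two cited results.

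For the converse I would start from the $s$-series $A=I_0\supseteq\cdots\supseteq I_n=0$ furnished by Proposition~\ref{newmulti} and read off the two desired conclusions separately. Right nilpotency is immediate from Lemma~\ref{lem:socle_series}, which yields $A^{(i+1)}\subseteq I_i$ for every $i$; taking $i=n$ gives $A^{(n+1)}\subseteq I_n=0$. To obtain nilpotency of $(A,+)$, the key step is to reinterpret the defining condition of the $s$-series in purely additive terms. Since the socle of any skew left brace is contained in the center of its additive group, the inclusion $I_{j-1}/I_j\subseteq\Soc(A/I_j)$ forces $I_{j-1}/I_j\subseteq Z(A/I_j,+)$; as each $I_j$ is an ideal and hence a normal subgroup of the additive group, this is exactly the statement $[A,I_{j-1}]_+\subseteq I_j$. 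Thus the given $s$-series is a central series of $(A,+)$, and $(A,+)$ is nilpotent by the standard characterization of nilpotent groups.

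The argument is almost entirely a recombination of the lemmas already proved, so I do not expect a serious obstacle. The one point that is not purely formal is the observation in the last paragraph that every $s$-series is automatically a central series of the additive group; once that is noticed, both conclusions of the forward direction drop out, and the converse needs nothing beyond Lemmas~\ref{lem:right_nilpotent} and the equivalence of Proposition~\ref{newmulti}.
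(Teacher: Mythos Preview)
Your proof is correct. The converse direction (right nilpotent and $(A,+)$ nilpotent $\Rightarrow$ finite multipermutation level) is identical to the paper's: Lemma~\ref{lem:right_nilpotent} followed by Proposition~\ref{newmulti}.

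For the forward direction the paper takes a slightly different path: it argues by induction on the multipermutation level $n$, using that $A/\Soc(A)$ has level $n$ to conclude $(A/\Soc(A))^{(m)}=0$ and $(A/\Soc(A),+)$ nilpotent, and then pulling these back to $A$ via $\Soc(A)\subseteq Z(A,+)$ and $\Soc(A)*A=0$. Your route instead invokes Proposition~\ref{newmulti} once more to obtain an $s$-series, then reads off right nilpotency from Lemma~\ref{lem:socle_series} and nilpotency of $(A,+)$ from the observation that an $s$-series is automatically a central series of the additive group (since $\Soc(B)\subseteq Z(B,+)$ for any skew left brace $B$). Your approach exploits the already-proved lemmas more fully and is a bit more uniform; the paper's induction is more self-contained for this implication but essentially repeats the content of Lemma~\ref{lem:socle_series} and the central-series remark inside the inductive step.
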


\begin{proof}
Suppose that $A$ has finite multipermutation level.
We proceed by induction on the multipermutation level $n$. The case
$n=1$ is trivial.  Let $A$ be a skew left brace of
    finite multipermutation level $n+1$. Since $A/\Soc(A)$ has multipermutation
    level $n$, the inductive hypothesis implies that $(A/\Soc(A))^{(m)}=0$ for
    some $m$ and $(A/\Soc(A),+)$ is nilpotent. This implies that
    $A^{(m)}\subseteq\Soc(A)$ and hence $A^{(m+1)}=0$, furthermore, since
    $\Soc(A)$ is central in $(A,+)$, we have that $(A,+)$ is nilpotent.
Conversely, suppose that $A$ is right nilpotent and $(A,+)$ is
nilpotent.  By Lemma~\ref{lem:right_nilpotent}, $A$ admits a
$s$-series. Thus the result follows by Proposition~\ref{newmulti}.
\end{proof}

The following example shows that the assumption on the nilpotency of
the additive group of the skew left brace is needed for
Theorem~\ref{thm:mpl&right_nilpotent}.

\begin{exa}\label{ex:trivial}
Let $A$ be a non-zero skew left brace such that $a\circ b=a+b$ for
all $a,b\in A$. Then $A^{(2)}=0$, thus $A$ is right nilpotent. But
if $Z(A,+)=0$, then $\Soc(A)=0$ and $A$ does not have finite
multipermutation level. For example, we can take $(A,+)=(A,\circ)$
any non-abelian simple group.
\end{exa}

\begin{defn}
    A skew left brace $A$ is said to be \emph{left nilpotent} if $A^{m}=0$ for some
    $m\geq1$.
\end{defn}

\begin{lem}
    Let $f\colon A\to B$ be a surjective homomorphism of skew left braces. Then
    $f(A^{k})=B^{k}$ for all $k$. In particular, if $A$ is left nilpotent,
    then $B$ is left nilpotent.
\end{lem}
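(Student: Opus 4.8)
The plan is to mirror the proof of Lemma~\ref{lem:right_nilpotent:quotient}, replacing the right series $A^{(k)}$ by the left series $A^{k}$, and to proceed by induction on $k$. The base case $k=1$ is immediate: since $f$ is surjective, $f(A^{1})=f(A)=B=B^{1}$. For the inductive step I would assume $f(A^{k})=B^{k}$ and then compute $f(A^{k+1})=f(A*A^{k})$, reducing everything to the fact that $f$ commutes with the two operations used to build the left series.

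The key observation I would use is that $f$ intertwines both the product $*$ and the additive-span operation $\langle-\rangle_{+}$ with taking images. On the one hand, since $f$ is a homomorphism of skew left braces it respects $+$ and $\circ$, hence it respects $*$; that is, $f(a*x)=f(a)*f(x)$ for all $a,x\in A$. On the other hand, since $f$ is in particular a homomorphism of additive groups, the image under $f$ of the additive subgroup generated by a set $S$ equals the additive subgroup generated by $f(S)$. Combining these two facts yields
\[
f(A*A^{k})=\langle f(a*x):a\in A,\,x\in A^{k}\rangle_{+}=\langle f(a)*f(x):a\in A,\,x\in A^{k}\rangle_{+}.
\]

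At this point surjectivity enters exactly as in Lemma~\ref{lem:right_nilpotent:quotient}: as $a$ ranges over $A$ the image $f(a)$ ranges over all of $B=f(A)$, and by the inductive hypothesis $f(A^{k})=B^{k}$, so the generators $f(a)*f(x)$ range precisely over $\{b*y:b\in B,\,y\in B^{k}\}$. Hence $f(A^{k+1})=B*B^{k}=B^{k+1}$, completing the induction. The final assertion then follows at once: if $A$ is left nilpotent, choose $m$ with $A^{m}=0$; then $B^{m}=f(A^{m})=f(0)=0$, so $B$ is left nilpotent. I do not expect a genuine obstacle here — the only point requiring a moment's care is the commutation of $f$ with $\langle-\rangle_{+}$, which holds simply because $f$ is an additive group homomorphism.
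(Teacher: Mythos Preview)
Your proposal is correct and follows exactly the approach the paper intends: the paper's own proof simply reads ``It is similar to the proof of Lemma~\ref{lem:right_nilpotent:quotient},'' and you have spelled out precisely that analogy, swapping the right series for the left series. There is nothing to add.
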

\begin{proof}
    It is similar to the proof of Lemma~\ref{lem:right_nilpotent:quotient}.
\end{proof}
\begin{lem}
    Let $A$ be a left nilpotent skew left brace and $B\subseteq A$ be a sub skew left
    brace. Then $B$ is left nilpotent.
\end{lem}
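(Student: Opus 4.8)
The plan is to mirror the proof of Lemma~\ref{lem:right_nilpotent:sub} for the right series, simply replacing $A^{(k)}$ by $A^k$. Concretely, I would prove by induction on $k$ that $B^k\subseteq A^k$ for every $k\geq 1$. The left nilpotency of $B$ then follows at once: since $A$ is left nilpotent there is an $m$ with $A^m=0$, whence $B^m\subseteq A^m=0$.

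For the base case, $B^1=B\subseteq A=A^1$. For the inductive step, assume $B^n\subseteq A^n$. The subgroup $B^{n+1}=B*B^n$ is additively generated by the elements $b*y$ with $b\in B$ and $y\in B^n$. Since $B$ is a sub skew left brace, the operations $+$ and $\circ$ (and hence $*$) on $B$ are the restrictions of those on $A$, so each generator $b*y$ coincides with the corresponding element computed in $A$; as $b\in A$ and $y\in A^n$ by the inductive hypothesis, we obtain $b*y\in A*A^n=A^{n+1}$.

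It remains to pass from the generators to the generated subgroup. Here one must observe that $(B,+)$ is a subgroup of $(A,+)$, so the additive subgroup of $B$ generated by a subset $S\subseteq B$ coincides with the additive subgroup of $A$ generated by $S$ (in both cases it is the smallest subgroup of $A$ containing $S$). Applying this with $S=\{b*y:b\in B,\ y\in B^n\}$ yields $B^{n+1}\subseteq A^{n+1}$, completing the induction. This final point---checking that the two ambient additive groups produce the same generated subgroup---is the only step requiring any care; the remainder is a direct transcription of the right-nilpotent argument, and I anticipate no genuine obstacle.
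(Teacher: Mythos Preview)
Your proof is correct and follows exactly the approach the paper intends: the paper's own proof simply reads ``It is similar to the proof of Lemma~\ref{lem:right_nilpotent:sub}'', which is precisely the induction $B^{k}\subseteq A^{k}$ you carry out. Your extra care about the generated additive subgroup is fine but not strictly needed for the inclusion, since it suffices that the generators of $B^{n+1}$ lie in the additive subgroup $A^{n+1}$.
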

\begin{proof}
    It is similar to the proof of Lemma~\ref{lem:right_nilpotent:sub}.
\end{proof}
\begin{lem}
    \label{lem:left_nilpotent:x}
    Let $A_1,\dots,A_k$ be left nilpotent skew left braces. Then the direct product
    $A_1\times\cdots\times A_k$ is left nilpotent.
\end{lem}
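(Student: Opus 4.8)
The plan is to reduce the statement to a single structural identity about the left series of a direct product. Since all the brace operations in $A=A_1\times\cdots\times A_k$ are computed componentwise, one has
\[
(a_1,\dots,a_k)*(b_1,\dots,b_k)=(a_1*b_1,\dots,a_k*b_k),
\]
and in particular $0*b=0=a*0$ in each factor (because $\lambda_0=\id$ and each $\lambda_a$ fixes $0$). The heart of the argument is the claim that
\[
A^n=A_1^n\times\cdots\times A_k^n
\]
for every $n\geq1$. Once this is established the lemma is immediate: choosing $m_i$ with $A_i^{m_i}=0$ and setting $m=\max\{m_1,\dots,m_k\}$, the monotonicity of each left series gives $A_i^m=0$ for all $i$, whence $A^m=A_1^m\times\cdots\times A_k^m=0$.

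First I would prove the displayed identity by induction on $n$, with the trivial base case $A^1=A=A_1\times\cdots\times A_k$. Assuming $A^n=A_1^n\times\cdots\times A_k^n$, the forward inclusion $A^{n+1}\subseteq A_1^{n+1}\times\cdots\times A_k^{n+1}$ is the easy half: any generator $a*x$ with $a\in A$ and $x\in A^n$ has $i$-th coordinate $a_i*x_i\in A_i*A_i^n=A_i^{n+1}$ by the componentwise formula, and the right-hand side is an additive subgroup, so it contains the additive span of these generators.

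The reverse inclusion is the step I expect to be the only real obstacle, since $A^{n+1}$ is defined as an additive span and one must check that each \emph{single-coordinate} element lands in it. Here I would use the embeddings $A_i\hookrightarrow A$ sending $x_i$ to the tuple with $x_i$ in position $i$ and $0$ elsewhere. If $x_i\in A_i^n$ then the tuple $(0,\dots,x_i,\dots,0)$ lies in $A_1^n\times\cdots\times A_k^n=A^n$, and for $a_i\in A_i$ the identities $0*b=0=a*0$ give
\[
(0,\dots,a_i,\dots,0)*(0,\dots,x_i,\dots,0)=(0,\dots,a_i*x_i,\dots,0),
\]
which therefore lies in $A*A^n=A^{n+1}$. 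Since $A_i^{n+1}$ is additively generated by the elements $a_i*x_i$ and $A^{n+1}$ is an additive subgroup, every tuple $(0,\dots,y_i,\dots,0)$ with $y_i\in A_i^{n+1}$ belongs to $A^{n+1}$; summing over the $k$ coordinates yields $A_1^{n+1}\times\cdots\times A_k^{n+1}\subseteq A^{n+1}$ and closes the induction. Alternatively one could first reduce to $k=2$ as in Lemma~\ref{lem:right_nilpotent:x}, but the general case costs nothing extra.
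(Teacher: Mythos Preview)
Your proof is correct and follows the same idea as the paper, which simply refers back to Lemma~\ref{lem:right_nilpotent:x} and the componentwise identity $(a,b)*(c,d)=(a*c,b*d)$. Note that you actually prove more than is needed: the forward inclusion $A^n\subseteq A_1^n\times\cdots\times A_k^n$ alone already suffices to conclude $A^m=0$ for $m=\max\{m_1,\dots,m_k\}$, so the reverse inclusion (while true and cleanly argued) can be omitted.
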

\begin{proof}
    It is similar to the proof of Lemma~\ref{lem:right_nilpotent:x}.
\end{proof}
\begin{pro}
    \label{thm:Fix}
    Let $A$ be a left nilpotent skew left brace and $I$ be a non-zero left ideal of
    $A$. Then $I\cap\Fix(A)\ne0$.
\end{pro}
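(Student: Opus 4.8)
The plan is to exploit the observation that $\Fix(A)$ has a clean description in terms of the $*$ operation: since $x\in\Fix(A)$ means $\lambda_a(x)=x$ for all $a\in A$, and $a*x=\lambda_a(x)-x$, we have $x\in\Fix(A)$ if and only if $a*x=0$ for all $a\in A$, i.e. the additive subgroup $A*\{x\}$ is trivial. This is the left-sided analog of the fact that $\ker\lambda=\{x:x*a=0\text{ for all }a\}$ which underlies the socle; the key point is that, unlike $\Soc(A)$, the set $\Fix(A)$ carries no centrality condition, which is exactly why (in contrast to Theorem~\ref{thm:IcapSoc}) no hypothesis on the additive group is needed here.

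With this in mind I would build a descending chain inside $I$ by iterating $A*(-)$. Set $I_0=I$ and $I_{k+1}=A*I_k$. Because $I$ is a left ideal, Lemma~\ref{lem:A*I} gives $A*I\subseteq I$, and since $J\subseteq J'$ implies $A*J\subseteq A*J'$ (the generators $a*x$ with $x\in J$ are among those with $x\in J'$), an immediate induction yields the descending chain $I=I_0\supseteq I_1\supseteq I_2\supseteq\cdots$, all terms contained in $I$. Using the same monotonicity together with $A*A^{j}=A^{j+1}$, a second induction shows $I_k\subseteq A^{k+1}$: the base case is $I_0=I\subseteq A=A^1$, and the step is $I_{k+1}=A*I_k\subseteq A*A^{k+1}=A^{k+2}$. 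Since $A$ is left nilpotent, say $A^m=0$, this forces $I_{m-1}\subseteq A^m=0$, so the chain reaches zero.

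To finish, let $k$ be the largest index with $I_k\ne0$; such a $k$ exists because $I_0=I\ne0$ while $I_{m-1}=0$. Then $A*I_k=I_{k+1}=0$, which means every generator $a*x$ with $a\in A$, $x\in I_k$ vanishes; by the reinterpretation of the first paragraph this says precisely that $I_k\subseteq\Fix(A)$. As $I_k\subseteq I_0=I$ as well, we conclude $0\ne I_k\subseteq I\cap\Fix(A)$, proving the proposition. I do not expect any genuine obstacle: the argument is essentially bookkeeping once one notices the description $\Fix(A)=\{x:A*\{x\}=0\}$ and the monotonicity of $A*(-)$; the only mild point to verify carefully is that the iterated product $I_k$ indeed sinks into $A^{k+1}$ so that left nilpotency of $A$ guarantees termination.
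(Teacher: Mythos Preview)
Your proof is correct and follows essentially the same approach as the paper: both arguments locate a non-zero subset of $I$ annihilated by $A*(-)$ by descending along a chain that is eventually forced to zero by left nilpotency. The only cosmetic difference is that the paper uses the chain $I\cap A^{k}$ and picks the largest $k$ with $I\cap A^{k}\ne 0$, whereas you iterate $A*(-)$ directly on $I$; since your $I_k\subseteq I\cap A^{k+1}$, your chain sits inside the paper's, and the conclusion is reached identically.
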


\begin{proof}
    Let $m=\max\{k:I\cap A^k\ne0\}$. Since
    $A*(I\cap A^m)\subseteq I\cap A^{m+1}=0$,
    it follows that there exists a non-zero $x\in I\cap A^m$ such that $a*x=0$
    for all $a\in A$. Thus $0\ne x\in\Fix(A)\cap I$.
\end{proof}

\begin{cor}
    Let $A$ be a non-zero skew left brace. If $A$ is left nilpotent, then $\Fix(A)\ne0$.
\end{cor}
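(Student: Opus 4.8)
The plan is to derive this statement directly from Proposition~\ref{thm:Fix}, which was just established. The only observation needed is that a non-zero skew left brace always contains an obvious non-zero left ideal, namely itself. Indeed, recall that a left ideal is a subgroup $I$ of the additive group of $A$ satisfying $\lambda_a(I)\subseteq I$ for all $a\in A$; taking $I=A$ this holds trivially, since each $\lambda_a$ is an automorphism of $(A,+)$ and hence $\lambda_a(A)=A\subseteq A$. Moreover $A\ne0$ by hypothesis, so $A$ is a \emph{non-zero} left ideal of itself.

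With this in hand, I would simply apply Proposition~\ref{thm:Fix} with $I=A$. Since $A$ is left nilpotent and $A$ is a non-zero left ideal of $A$, the proposition yields $A\cap\Fix(A)\ne0$; as $A\cap\Fix(A)=\Fix(A)$, we conclude that $\Fix(A)\ne0$, which is exactly the assertion.

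There is essentially no obstacle here: the corollary is nothing but the specialization $I=A$ of the preceding proposition, and all the substantive work has already been carried out in the proof of Proposition~\ref{thm:Fix} (where one takes $m=\max\{k:I\cap A^k\ne0\}$ and uses $A*(I\cap A^m)\subseteq I\cap A^{m+1}=0$ to produce a non-zero element of $\Fix(A)\cap I$). The only point worth a moment's verification is that $A$ genuinely qualifies as a left ideal in the sense required by the proposition, which, as noted above, is immediate.
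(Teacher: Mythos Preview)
Your proposal is correct and matches the paper's own proof exactly: the paper simply writes ``Apply Proposition~\ref{thm:Fix} with $I=A$.'' Your additional remarks verifying that $A$ is a left ideal of itself are accurate but more detailed than what the paper includes.
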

\begin{proof}
    Apply Proposition~\ref{thm:Fix} with $I=A$.
\end{proof}

Another important series of ideals was defined
in~\cite{MR3814340} for braces. Let $A$ be a skew left brace.
Let $A^{[1]}=A$ and for $n\geq 1$ let $A^{[n+1]}$ be the
additive subgroup of $A$ generated by elements from
$\{A^{[i]}*A^{[n+1-i]}:1\leq i\leq n\}$, i.e.
\[
    A^{[n+1]}=\left\langle \bigcup_{i=1}^n A^{[i]}*A^{[n+1-i]}\right\rangle_+
\]
for all $n\geq2$. One easily proves by induction that $A^{[1]}\supseteq
A^{[2]}\supseteq\cdots$.

\begin{pro}
    \label{pro:Smoktunowicz}
    Let $A$ be a skew left brace. Each $A^{[n]}$ is a left ideal of $A$.
\end{pro}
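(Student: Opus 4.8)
The plan is to deduce the statement from Lemma~\ref{lem:A*I} together with the already noted fact that the sequence $A^{[1]}\supseteq A^{[2]}\supseteq\cdots$ is decreasing. Since each $A^{[n]}$ is by construction a subgroup of $(A,+)$, Lemma~\ref{lem:A*I} tells us that $A^{[n]}$ is a left ideal as soon as $A*A^{[n]}\subseteq A^{[n]}$. The first thing I would record is the containment
\[
    A^{[i]}*A^{[j]}\subseteq A^{[i+j]}\qquad(i,j\geq1),
\]
which is immediate from the definition: a generator $x*y$ with $x\in A^{[i]}$ and $y\in A^{[j]}$ is precisely the index-$i$ generator of $A^{[i+j]}$ (and $1\leq i\leq i+j-1$ since $j\geq1$). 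Applying this with $i=1$, $j=n$ gives $A*A^{[n]}=A^{[1]}*A^{[n]}\subseteq A^{[n+1]}$, so the proposition follows once we know $A^{[n+1]}\subseteq A^{[n]}$.

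Thus everything rests on the monotonicity $A^{[n+1]}\subseteq A^{[n]}$, which I would prove by strong induction on $n$. The base case $A^{[2]}=A*A\subseteq A=A^{[1]}$ is trivial. For the inductive step it is enough to show that each generating set $A^{[i]}*A^{[n+1-i]}$, $1\leq i\leq n$, lies in $A^{[n]}$, and I would split the range of $i$. For $1\leq i\leq n-1$ I shrink the right-hand factor: the inductive hypothesis gives $A^{[n+1-i]}\subseteq A^{[n-i]}$, whence $A^{[i]}*A^{[n+1-i]}\subseteq A^{[i]}*A^{[n-i]}\subseteq A^{[n]}$ by the displayed containment. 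For the extreme term $i=n$ the right-hand factor is $A^{[1]}=A$ and cannot be shrunk, so instead I shrink the left-hand factor, using $A^{[n]}\subseteq A^{[n-1]}$ from the inductive hypothesis to obtain $A^{[n]}*A=A^{[n]}*A^{[1]}\subseteq A^{[n-1]}*A^{[1]}\subseteq A^{[n]}$.

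The main obstacle is exactly this extreme term $i=n$, that is, the right-multiplication closure $A^{[n]}*A\subseteq A^{[n]}$. A direct attack on it is awkward, because $*$ is not additive in its first argument and one therefore cannot reduce $g*a$ with $g\in A^{[n]}$ to the defining generators of $A^{[n]}$; the two identities available, $a*(b+c)=a*b+b+a*c-b$ and $(a\circ b)*c=a*(b*c)+b*c+a*c$, only lead back to tautologies here. The whole point of the argument is to sidestep this computation: rather than expanding $g*a$, one lowers the left factor by one level using the inductive instance of the very monotonicity being proved, after which $A^{[n-1]}*A^{[1]}\subseteq A^{[n]}$ is automatic from the definition. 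Once the decreasing property is established, the left-ideal statement follows in a single line from Lemma~\ref{lem:A*I} as above, the only other elementary fact used being monotonicity of $*$ in each argument (if $X\subseteq X'$ and $Y\subseteq Y'$ then $X*Y\subseteq X'*Y'$, since generators map to generators).
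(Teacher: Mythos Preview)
Your proof is correct and follows exactly the paper's approach: by Lemma~\ref{lem:A*I} it suffices to show $A*A^{[n]}\subseteq A^{[n+1]}\subseteq A^{[n]}$, and you simply spell out in full the strong induction for the monotonicity $A^{[n+1]}\subseteq A^{[n]}$ that the paper records just before the proposition with the remark ``one easily proves by induction.'' Your treatment of the extreme term $i=n$ via $A^{[n]}\subseteq A^{[n-1]}$ is the natural way to complete that induction, and the extended discussion of why a direct expansion of $g*a$ is unhelpful, while not needed for the proof, is accurate commentary.
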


\begin{proof}
    Each $A^{[n]}$ is a subgroup of $(A,+)$. Since
    $A*A^{[n]}\subseteq A^{[n+1]}\subseteq A^{[n]}$, the claim follows from
    Lemma~\ref{lem:A*I}.
\end{proof}

    We will show that there exists a left brace $A$ such that $A^{[n]}=A^{[n+1]}$
    are non-zero for some positive integer $n$ and $A^{[n+2]}=0$.

\begin{exa}
    \label{exa:funny}
    Let
    \begin{align*}
        &G=\langle r,s:r^8=s^2=1,\,srs=r^7\rangle\simeq\D_{16},\\
        &X=\langle a,b:8a=2b=0,\,a+b=b+a\rangle\simeq \Z/(8)\times \Z/(2).
    \end{align*}
    The group $G$ acts by automorphisms on $X$ via
    \[
        r\cdot a=a+b,\quad r\cdot b=4a+b,\quad
        s\cdot a=3a, \quad s\cdot b=4a+b.
    \]
    A direct calculation shows that the map $\pi\colon G\to X$ given by
    \begin{align*}
        1 &\mapsto 0, & r&\mapsto a, & r^2&\mapsto 2a+b, & r^3&\mapsto 7a+b,\\
        r^4 &\mapsto 4a, & r^5&\mapsto 5a, & r^6&\mapsto 6a+b, & r^7&\mapsto 3a+b,\\
        rs &\mapsto 6a, & r^2s&\mapsto 7a, & r^3s&\mapsto b, &r^4s&\mapsto 5a+b,\\
        r^5s &\mapsto 2a, & r^6s&\mapsto 3a, &r^7s&\mapsto 4a+b,&s&\mapsto a+b,
    \end{align*}
    is a bijective $1$-cocycle. Therefore there exists a left brace $A$ with
    additive group isomorphic to $X$ and multiplicative group isomorphic to
    $G$. The addition of $A$ is that of $X$ and the multiplication is given by
    \[
        x\circ y=\pi(\pi^{-1}(x)\pi^{-1}(y)),\quad x,y\in X.
    \]
    Since
    \begin{align*}
        a*a&=-a+a\circ a-a=-a+(2a+b)-a=b,\\
        (5a+b)*a&=-(5a+b)+(5a+b)\circ a-a=-(5a+b)+b-a=2a,
    \end{align*}
    it follows that $A^{[2]}$ contains $\langle
    2a,b\rangle_+=\{0,2a,4a,6a,b,2a+b,4a+b,6a+b\}$, the additive subgroup of
    $(A,+)$ generated by $2a$ and $b$. Therefore $A^{[2]}=\langle 2a,b\rangle_+$
    since $A^{[2]}\ne A$ (this can be proved by hand or using Theorem~\ref{thm:left_nilpotent=nilpotent}). Routine calculations prove that
    \begin{align*}
        A^{[3]}=\{0,2a+b,4a,6a+b\}, && A^{[4]}=A^{[5]}=\{0,4a\}, && A^{[6]}=\{0\}.
    \end{align*}
\end{exa}

The relation between the sequence of the $A^{[n]}$ and the left and
right series is given in the following theorem.

\begin{thm}
    \label{thm:equivalence}
    Let $A$ be a skew left brace. The following statements are equivalent:
    \begin{enumerate}
        \item $A^{[\alpha]}=0$ for some $\alpha\in\N$.
        \item $A^{(\beta)}=0$ and $A^\gamma=0$ for some $\beta,\gamma\in\N$.
    \end{enumerate}
\end{thm}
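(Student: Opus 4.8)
The plan is to prove the two implications separately, using throughout the basic containment $A^{[i]}*A^{[j]}\subseteq A^{[i+j]}$, which is immediate from the definition of the sequence $A^{[n]}$: for $i+j=n+1$ with $1\le i\le n$, the set $A^{[i]}*A^{[n+1-i]}$ is one of the summands generating $A^{[n+1]}$.

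For $(1)\Rightarrow(2)$ I would first show, by induction on $n$, that $A^{n}\subseteq A^{[n]}$ and $A^{(n)}\subseteq A^{[n]}$. The base case is trivial, and for the inductive step one uses the two \emph{extremal} summands in the definition of $A^{[n+1]}$: since $A^{[1]}=A$, one has $A^{n+1}=A*A^{n}\subseteq A*A^{[n]}=A^{[1]}*A^{[n]}\subseteq A^{[n+1]}$, and likewise $A^{(n+1)}=A^{(n)}*A\subseteq A^{[n]}*A^{[1]}\subseteq A^{[n+1]}$. Hence if $A^{[\alpha]}=0$, then both $A^{\alpha}=0$ and $A^{(\alpha)}=0$, which is $(2)$ with $\beta=\gamma=\alpha$.

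The substantial direction is $(2)\Rightarrow(1)$, where I would argue by induction on the right nilpotency index $\beta$, keeping $A^{(\beta)}=0$ and $A^{\gamma}=0$ fixed. The cases $\beta\le 2$ are immediate, since then $A^{[2]}=A^{(2)}=0$. For the inductive step set $\bar A=A/A^{(\beta-1)}$; then $\bar A^{(\beta-1)}=0$ and $\bar A^{\gamma}=0$, because the left and right series pass to quotients as in Lemma~\ref{lem:right_nilpotent:quotient}. By the induction hypothesis $\bar A$ is strongly nilpotent, and since the sequence $A^{[n]}$ also passes to quotients (same argument as Lemma~\ref{lem:right_nilpotent:quotient}), this yields some $\alpha'$ with $A^{[\alpha']}\subseteq J$, where $J:=A^{(\beta-1)}$. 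The key observation is now that $J*A=A^{(\beta)}=0$ forces $J\subseteq\ker\lambda$, and therefore $u*x=\lambda_u(x)-x=0$ for every $u\in J$ and every $x\in A$; that is, $J*X=0$ for an arbitrary subset $X$. Consequently, in the defining sum $A^{[n+1]}=\sum_{i=1}^{n}A^{[i]}*A^{[n+1-i]}$, every summand with $i\ge\alpha'$ vanishes, because then $A^{[i]}\subseteq A^{[\alpha']}\subseteq J$. Thus only the indices $i\le\alpha'-1$ survive, and for these $A^{[i]}\subseteq A$ gives $A^{[i]}*A^{[n+1-i]}\subseteq A*A^{[n+1-i]}$. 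Writing $h(n)$ for the largest $k$ with $A^{[n]}\subseteq A^{k}$ (a nondecreasing function of $n$, since the sequence $A^{[n]}$ is decreasing), the relation $A*A^{k}\subseteq A^{k+1}$ then yields $h(n+1)\ge h(n+2-\alpha')+1$, with $h(1)\ge 1$. This recursion forces $h(n)\to\infty$, so for $n$ large $A^{[n]}\subseteq A^{\gamma}=0$, completing the induction; explicitly one obtains $A^{[\,1+(\gamma-1)(\alpha'-1)\,]}=0$ (the degenerate case $\alpha'=1$ gives $A^{[2]}=A*A\subseteq J*A=0$ at once).

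The main obstacle is precisely that the two one-sided conditions do not individually control arbitrary bracketings: neither a bound $A^{[n]}\subseteq A^{f(n)}$ nor a bound $A^{[n]}\subseteq A^{(g(n))}$ can be driven to $f,g\to\infty$ on its own, because right multiplication of a left power by $A$ (and, dually, left multiplication of a right power by $A$) is not governed by the corresponding series; in the recursion above this is visible in the summands $A^{[i]}*A^{[n+1-i]}$ with $i$ close to $n$, whose right factor $A^{[n+1-i]}$ has small index and so blocks any growth. The induction on $\beta$ is exactly what removes these terms: descending to the ideal $A^{(\beta-1)}\subseteq\ker\lambda$ produces a subobject that annihilates everything on the left under $*$, collapsing all ``long left factor'' summands and finally letting the left series drive the growth of $h$.
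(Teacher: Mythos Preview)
Your proof is correct and follows essentially the same route as the paper's: induction on the right nilpotency index, passing to the quotient $A/A^{(\beta-1)}$ to land the high-index summands of $A^{[n]}$ in an ideal that $*$-annihilates everything on the right, and then iterating the resulting containment $A^{[n]}\subseteq A*A^{[n-(\alpha'-1)]}$ to push $A^{[n]}$ into $A^{\gamma}$. The only cosmetic differences are that the paper indexes the induction as $\beta\mapsto\beta+1$ rather than $\beta-1\mapsto\beta$ and argues the final iteration directly instead of via your function $h$; your explicit bound $A^{[\,1+(\gamma-1)(\alpha'-1)\,]}=0$ agrees with theirs up to a harmless off-by-one in the paper's indexing.
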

\begin{proof}
    To prove that $(1)\implies(2)$ one proves by induction that $A^{n}\subseteq
    A^{[n]}$ and $A^{(n)}\subseteq A^{[n]}$
    for all positive integer $n$.
    Let us prove that
    $(2)\implies(1)$. We proceed by induction on $\beta$.  If
    $\beta\in\{1,2\}$, then $0=A^{(2)}=A^2=A^{[2]}$ and the result is true. Fix
    $\beta\in\N$ and suppose that the result holds for this $\beta$, so for
    every $\gamma$ there exists $\alpha=\alpha(\gamma)$ depending on $\gamma$
    such that $A^{[\alpha]}=0$.  We need to show that $A^{(\beta+1)}=0$ and
    $A^{\gamma}=0$ imply that $A^{[n]}$ for some $n$. Let $n>\alpha(\gamma)$.
    Every element of $A^{[n]}$ is a sum of elements from $A^{[i]}*A^{[j]}$,
    where $i+j=n$ and $1\leq i\leq n-1$. Note that if $\alpha(\gamma)\leq i\leq
    n-1$, $a_i\in A^{[i]}$ and $a_{n-i}\in A^{[n-i]}$, then by the inductive
    hypothesis applied to the quotient $A/A^{(\beta)}$, $a_i\in A^{(\beta)}$ and thus $a_i*a_{n-i}\in A^{(\beta
    +1)}=0$. Hence we may assume that the elements of $A^{[n]}$ are sums of
    elements from $A^{[i]}*A^{[j]}$ for $1\leq i<\alpha(\gamma)$ and $j\geq
    n-\alpha(\gamma)$ such that $i+j=n$. Then
    \[
    A^{[n]}\subseteq A*A^{[n-\alpha(\gamma)]}\subseteq A^2.
    \]
    Applying the same argument for $n'=n-\alpha(\gamma)$ we obtain that
    $A^{[n']}\subseteq A*A^{[n'-\alpha(\gamma)]}$ provided that
    $n'>\alpha(\gamma)$. Therefore
    \[
        A^{[n]}\subseteq A*(A*A^{[n-2\alpha(\gamma)]})\subseteq A^3
    \]
    provided that $n>2\alpha(\gamma)$. Continuing in this way we obtain that
    $A^{[n]}\subseteq A^{k}$
    provided that $n>(k-1)\alpha(\gamma)$.
    Then it follows that $A^{[(\gamma-1)\alpha(\gamma)+1]}\subseteq
    A^{\gamma}=0$.
\end{proof}

\begin{defn}
A skew left brace is said to be \emph{left nil} if for every
$a\in A$ we have $a*(a*(a*\cdots))=0$ (for sufficiently large number
of brackets in this equation).
\end{defn}


It is known that every finite left nil left brace is left
nilpotent, see~\cite{MR3765444}.  Right nil skew left braces
are defined in a similar fashion.
\begin{defn}
A skew left brace $A$ is said to be \emph{strongly nilpotent}
if there is a positive integer $n$ such that $A^{[n]}=0$.
\end{defn}
\begin{defn}
    A skew left brace $A$ is said to be \emph{strongly nil} if for every
    $a\in A$ there is a positive integer $n=n(a)$ such that any $*$-product of
    $n$ copies of $a$ is zero.
\end{defn}

We do not know the answer to the following questions:
\begin{question}
    \label{question:rightnil=>rightnilp}
    Let $A$ be a finite right nil skew left brace.  Is $A$ right
    nilpotent?
\end{question}
\begin{question}
    \label{question:stronglynil=>stronglynilp}
    Let $A$ be a finite strongly nil skew left brace.
    Is $A$ strongly nilpotent?
\end{question}

%
%

\section{Perfect skew left braces}
\label{perfect}

A skew left brace $A$ is said to be perfect if $A^2 = A$.  Let $G$
be a perfect group, that is $G=[G,G]$. By Gr\"{u}n's Lemma (see
\cite[page 3]{grun}), $Z(G/Z(G))=\{ 1\}$. Let $B$ be the skew left brace with multiplicative
group $G$ and addition defined by $a+b=ba$, for all $a,b\in G$. In $B$ we have
$a*b=-a+ab-b=b^{-1}aba^{-1}$, for all $a,b\in G$. Hence the multiplicative
group of $B*B$ is $[G,G]$. Note also that $x\in \Soc(B)$ if and only if
$1=a^{-1}xax^{-1}$, for all $a\in G$. Thus $\Soc(B)=Z(G)$. Since $Z(G/Z(G))=\{
1\}$, we have that $\Soc(B/Soc(B)) =\{1\}$.

Thus the following question appears to be natural.

\begin{question}\label{newLeandro} Let $A$ be a perfect skew left brace. Is
$\Soc(A/Soc(A)) = 0$?
\end{question}

We shall see that the answer is negative.

Let $B_1$ and $B_2$ be skew left braces. Recall that the wreath product
$B_2\wr B_1$ of the left braces $B_2$ and $B_1$ is a left brace
which is the semidirect product of left braces $H_2\rtimes B_1$,
where $H_2=\{ f\colon B_1\longrightarrow B_2\mid |\{g\in B_1\mid
f(g)\neq 0\}|<\infty\}$ is a left brace with the operations
$(f_1\circ f_2)(g)=f_1(g)\circ f_2(g)$ and
$(f_1+f_2)(g)=f_1(g)+f_2(g)$, for all $f_1,f_2\in B_2$ and $g\in
B_1$, and the action of $(B_1,\circ)$ on $H_2$ is given by the
homomorphism $\sigma\colon (B_1,\circ)\longrightarrow \Aut(H_2,
+,\circ)$ defined by $\sigma(g)(f)(x)=f(g'\circ x)$, for all $g,x\in
B_1$ and $f\in H_2$. Recall that
the operations of $H_2\rtimes B_1$ are
\begin{align*}
    &(h_2,b_1)\circ (k_2,c_1)=(h_2\circ b_1(k_2),b_1\circ c_1),\\
    &(h_2,b_1)+(k_2,c_1)=(h_2+k_2,b_1+c_1),
\end{align*}
where we denote $\sigma(b_1)(k_2)$ simply by $b_1(k_2)$.

The wreath product of left braces appears in \cite[Corollary 1]{MR3177933} (see
also \cite[Section 4]{BCJO18}). This construction also works for skew left
braces (see \cite[Corollary 2.39]{MR3763907}).

\begin{thm}\label{thm:perfect}
    Let $B$ be a finite perfect left skew left brace. Let $p$ be an odd prime
    non-divisor of the order of $B$. Let $T=\Z/(p)$ be the trivial left
    brace of order $p$. Then the subbrace $W\rtimes B$ of $T\wr B$,
    where $W=\{ f\colon B\longrightarrow T\mid \sum_{b\in B}f(b)=0\}$,
    is perfect and $\Soc(W\rtimes B)=W\times\{ 0\}$.
\end{thm}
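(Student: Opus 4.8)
The plan is to transport the whole computation into the group algebra of the multiplicative group of $B$. First I would record that, for a semidirect product of skew left braces, the additive group is the direct product of the additive factors, so the additive group of $W\rtimes B$ is $W\times B$ with componentwise addition and $W\times\{0\}$ is a \emph{central} additive subgroup. Since $T$, and therefore $H=T^{B}$ and $W$, are trivial braces, a direct computation with the operations of $H\rtimes B$ gives, for $h,k\in W$ and $b,c\in B$,
\[
\lambda_{(h,b)}(k,c)=(b(k),\lambda_{b}(c)),\qquad (h,b)*(k,c)=(b(k)-k,\;b*c),
\]
where $b(k)=\sigma(b)(k)$ and $\lambda_{b}$, $*$ on the right are computed in $B$. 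I would then identify $H=T^{B}$ with the group algebra $\F_{p}[(B,\circ)]$ and $W$ with its augmentation ideal $\mathfrak{a}=\{\sum_{b}c_{b}e_{b}:\sum_{b}c_{b}=0\}$; under $\sigma(b)(f)(x)=f(b'\circ x)$ the action of $b$ becomes left multiplication in the regular representation, so that $b(k)-k=(b-1)k$ and $\{b-1:b\in B\}$ spans $\mathfrak{a}$.

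For perfectness, set $J=(W\rtimes B)^{2}$. Projecting the generators $(h,b)*(k,c)=(b(k)-k,b*c)$ onto the second coordinate and using that $B$ is perfect yields $\langle b*c\rangle_{+}=B*B=B$, so $J$ surjects onto $B$. Taking $c=0$ gives $(h,b)*(k,0)=((b-1)k,0)$ with $k\in W=\mathfrak{a}$, and these elements additively generate $\{(v,0):v\in\mathfrak{a}^{2}\}$. The decisive point is that $\mathfrak{a}^{2}=\mathfrak{a}$: since $p\nmid|B|$, Maschke's theorem makes $\F_{p}[(B,\circ)]$ semisimple, so the two-sided ideal $\mathfrak{a}$ is generated by a central idempotent and hence satisfies $\mathfrak{a}^{2}=\mathfrak{a}$ (equivalently $\mathfrak{a}/\mathfrak{a}^{2}\cong(B,\circ)_{\mathrm{ab}}\otimes\F_{p}=0$, as $p\nmid|B|$). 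Thus $W\times\{0\}\subseteq J$; and for each $b\in B$ surjectivity gives some $(h,b)\in J$, whence $(0,b)=(h,b)-(h,0)\in J$, so $\{0\}\times B\subseteq J$ as well. Therefore $J=W\rtimes B$.

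For the socle I would use $\Soc(W\rtimes B)=\ker(\lambda)\cap Z(W\rtimes B,+)$. From the formula for $\lambda$, an element $(h,b)$ lies in $\ker(\lambda)$ precisely when $b$ acts trivially on $\mathfrak{a}$ and $\lambda_{b}=\id$ on $B$. Because $p\nmid|B|$ one has the splitting $\F_{p}[(B,\circ)]=\mathfrak{a}\oplus\langle\sum_{b}e_{b}\rangle$ with trivial action on the second summand, so the (faithful) regular representation is already faithful on $\mathfrak{a}$; hence $b$ acting trivially on $\mathfrak{a}$ forces $b=0$, giving $\ker(\lambda)=W\times\{0\}$. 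Since $W\times\{0\}$ is central in $(W\rtimes B,+)$, intersecting with the additive centre changes nothing, and $\Soc(W\rtimes B)=W\times\{0\}$.

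The main obstacle is the perfectness claim, and specifically showing that \emph{all} of $W\times\{0\}$ — not merely the smaller piece $\mathfrak{a}^{2}$ — lands in $J$; this is exactly where the hypothesis that $p$ does not divide $|B|$ enters, through the identity $\mathfrak{a}^{2}=\mathfrak{a}$. The surjectivity of $J$ onto $B$ is an immediate consequence of $B$ being perfect, and once the $\lambda$-formula and the semisimple splitting of the regular representation are in hand the socle computation is routine.
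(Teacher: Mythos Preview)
Your argument is correct and reaches the same conclusion as the paper, but by a genuinely different route. The paper proceeds by explicit computation: it first observes $(0,b_1)*(0,b_2)=(0,b_1*b_2)$, giving $\{0\}\times B\subseteq J$ immediately (slightly more direct than your surject--then--subtract manoeuvre), and then exhibits the single element $w=f_0-|B|^{-1}\sum_b f_b\in W$ and checks by hand that $(0,b)*(w,0)=(f_b-f_0,0)$, so that the additive generators of $W$ already lie in $J$. Your identification of $W$ with the augmentation ideal $\mathfrak a$ of $\F_p[(B,\circ)]$ and appeal to Maschke for $\mathfrak a^2=\mathfrak a$ is the conceptual explanation behind that computation: the paper's $w$ is precisely the central idempotent $e_0-|B|^{-1}\sum_b e_b$ generating $\mathfrak a$, and $(e_b-e_0)w=e_b-e_0$ recovers the generators. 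For the socle, the paper tests a putative $(h_1,b_1)\in\Soc$ against the specific element $f_{b_1}-f_0$ and uses that $p$ is \emph{odd} to derive $1=-1$ and force $b_1=0$; your argument via faithfulness of the regular representation on $\mathfrak a$ (using the splitting $\F_p[(B,\circ)]=\mathfrak a\oplus\F_p\cdot\sum_b e_b$) needs only $p\nmid|B|$, so it is marginally more general and shows that the oddness hypothesis is not actually required for the socle computation. The trade-off: the paper's proof is entirely self-contained and elementary; yours imports a standard piece of representation theory but makes the role of the hypothesis $p\nmid|B|$ transparent at each step.
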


\begin{proof}
Note that
\begin{equation}\label{eq:wreath}(h_1,b_1)*(h_2,b_2)=(-h_1+h_1\circ
b_1(h_2)-h_2,b_1*b_2),\end{equation} for all $h_1,h_2\in W$ and all
$b_1,b_2\in B$. In particular,
$$\{0\}\times B=\{0\}\times (B*B)=(\{0\}\times B)*(\{0\}\times B)\subseteq (W\rtimes B)*(W\rtimes B).$$
Let $f_{b_1}\colon B\longrightarrow T$ be the function defined by
$f_{b_1}(b_2)=\delta_{b_1,b_2}$ (the Kronecker delta), for all
$b_1,b_2\in B$. Note that $\{f_{b}-f_0\mid b\in B\}$ generates the
additive group of $W$. Since $p$ is not a divisor of $|B|$, there
exists $|B|^{-1}\in \Z/(p)=T$, and $f_0-|B|^{-1}\sum_{b\in B}f_b\in
W$ (note that the additive group of $W$ is a vector space over
$\Z/(p)$). Now we have
$$(0,b_1)*(f_0-|B|^{-1}\sum_{b\in B}f_b,0)=(b_1(f_0-|B|^{-1}\sum_{b\in B}f_b)-(f_0-|B|^{-1}\sum_{b\in B}f_b),0)$$
and
\begin{eqnarray*}
    b_1(f_0-|B|^{-1}\sum_{b\in B}f_b)(b_2)&=&(f_0-|B|^{-1}\sum_{b\in B}f_b)(b_1'\circ b_2)\\
&=&f_0(b_1'\circ b_2)-|B|^{-1}\sum_{b\in B}f_b(b_1'\circ b_2)\\
&=&f_{b_1}(b_2)-|B|^{-1}\\
&=&f_{b_1}(b_2)-|B|^{-1}\sum_{b\in B}f_b(b_2)\\
&=&(f_{b_1}-|B|^{-1}\sum_{b\in B}f_b)(b_2)
\end{eqnarray*}
for all $b_1,b_2\in B$. Hence
$$(0,b_1)*(f_0-|B|^{-1}\sum_{b\in B}f_b,0)=(f_{b_1}-f_0,0),$$
for all $b_1\in B$. Thus $W\times\{0\}\subseteq (W\rtimes
B)*(W\rtimes B)$. Hence
$$W\rtimes B=W\times \{0\}+\{0\}\times B\subseteq (W\rtimes B)*(W\rtimes B).$$
Therefore $W\rtimes B$ is perfect.

Let $(h_1,b_1)\in \Soc(W\rtimes
B)$. Then, by (\ref{eq:wreath}), $h_1\circ b_1(h_2)=h_1+h_2$ and
$b_1*b_2=0$, for all $h_2\in W$ and all $b_2\in B$. Hence
$$h_1(b)+h_2(b)=h_1(b)\circ b_1(h_2)(b)=h_1(b)\circ h_2(b_1'\circ b)=h_1(b)+ h_2(b_1'\circ b),$$
and thus
$$h_2(b)=h_2(b_1'\circ b),$$
for all $h_2\in W$ and all $b\in B$. In particular,
$(f_{b_1}-f_0)(b_1)=(f_{b_1}-f_0)(0)$. Suppose that $b_1\neq 0$.
Then $1=(f_{b_1}-f_0)(b_1)=(f_{b_1}-f_0)(0)=-1$ in $\Z/(p)$, a
contradiction because $p$ is odd. Hence $b_1=0$. Note that by
(\ref{eq:wreath}),
$$(h_1,0)*(h_2,b_2)=(-h_1+h_1\circ
0(h_2)-h_2,b_1*0)=(-h_1+h_1\circ h_2-h_2,0)=(0,0),$$ for all
$h_1,h_2\in W$ and $b_2\in B$. Hence $\Soc(W\rtimes
B)=W\times\{0\}$, and the result follows.
\end{proof}

Note that in Theorem~\ref{thm:perfect}, if $B$ is a left brace, then
$W\rtimes B$ also is a left brace. The following result answers
Question~\ref{newLeandro} in the negative:
\begin{cor}
For every positive integer $n$, there exists a finite perfect left
brace $B$ such that $\Soc(B/\Soc_n(B))\neq \{0\}$.
\end{cor}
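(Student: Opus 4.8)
The plan is to iterate the construction of Theorem~\ref{thm:perfect}: each application turns a finite perfect left brace $B$ into a new finite perfect left brace $A=W\rtimes B$ whose socle series is one term longer. Concretely, I will prove the shifting relation
\[
\Soc(A/\Soc_{k+1}(A))\cong\Soc(B/\Soc_k(B))\qquad(k\geq0),
\]
and then, starting from a perfect left brace with non-zero socle, apply it $n$ times to lift a non-zero socle up to the $n$-th level of the socle series. By the remark following Theorem~\ref{thm:perfect}, $A$ is again a left brace whenever $B$ is, so the whole construction stays within left braces.

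The heart of the argument is the isomorphism $A/\Soc(A)\cong B$. By Theorem~\ref{thm:perfect}, $A=W\rtimes B$ is perfect with $\Soc(A)=W\times\{0\}$; since the projection $(h,b)\mapsto b$ is a surjective homomorphism of left braces with kernel $W\times\{0\}$, we get $A/\Soc(A)\cong B$. Writing $S_1(C)=C$ and $S_{m+1}(C)=S_m(C)/\Soc(S_m(C))$ as in Proposition~\ref{newmulti}, this reads $S_2(A)\cong B=S_1(B)$, and a straightforward induction (isomorphisms preserve both socles and quotients) yields $S_{m+1}(A)\cong S_m(B)$ for all $m\geq1$. Since the proof of Proposition~\ref{newmulti} establishes $S_{m+1}(C)\cong C/\Soc_m(C)$ for any skew left brace $C$, we obtain
\[
A/\Soc_{k+1}(A)\cong S_{k+2}(A)\cong S_{k+1}(B)\cong B/\Soc_k(B),
\]
and taking socles proves the shifting relation.

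For the base of the induction I need a finite perfect left brace with non-zero socle. Start from any non-zero finite perfect left brace $C$; for instance a finite simple left brace of non-prime order, which is perfect because $C^2$ is an ideal of the simple brace $C$, and $C^2=0$ would force $C$ to be trivial and hence of prime order (such simple braces are known to exist in the literature on finite simple left braces). Fix an odd prime $p_0\nmid|C|$ and put $B_0=W_0\rtimes C$ as in Theorem~\ref{thm:perfect}; then $B_0$ is a finite perfect left brace and $\Soc(B_0)=W_0\times\{0\}\neq0$, since $\dim_{\Z/(p_0)}W_0=|C|-1\geq1$. Thus $\Soc(B_0/\Soc_0(B_0))=\Soc(B_0)\neq0$. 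Now, given $B_k$ with $\Soc(B_k/\Soc_k(B_k))\neq0$, choose an odd prime $p_{k+1}\nmid|B_k|$ and set $B_{k+1}=W_{k+1}\rtimes B_k$; the shifting relation gives $\Soc(B_{k+1}/\Soc_{k+1}(B_{k+1}))\cong\Soc(B_k/\Soc_k(B_k))\neq0$. After $n$ steps, $B_n$ is a finite perfect left brace with $\Soc(B_n/\Soc_n(B_n))\neq0$, which proves the corollary.

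The non-routine ingredients are precisely the isomorphism $A/\Soc(A)\cong B$ (immediate from Theorem~\ref{thm:perfect}) combined with the socle-series bookkeeping supplied by Proposition~\ref{newmulti}, and the availability at each stage of an odd prime coprime to the finite order of the current brace, which is never an obstruction. The only genuinely external input, and the main conceptual point, is seeding the induction with a single non-zero finite perfect left brace.
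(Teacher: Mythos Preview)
Your proof is correct and follows essentially the same approach as the paper: seed the induction with a finite simple non-trivial left brace, then repeatedly apply Theorem~\ref{thm:perfect} to push the non-zero socle one step further up the socle series via the identification $A/\Soc(A)\cong B$. Your treatment is in fact slightly more explicit than the paper's in justifying the shifting relation $\Soc(A/\Soc_{k+1}(A))\cong\Soc(B/\Soc_k(B))$ through the sequence $S_m$ of Proposition~\ref{newmulti}, and your indexing starts the induction at $k=0$ rather than $n=1$, but the substance is identical.
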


\begin{proof}
We shall prove the result by induction on $n$. For $n=1$, let $B_0$
be a finite simple non-trivial left brace (see \cite{MR3763276}).
Then by Theorem~\ref{thm:perfect}, there exists a perfect finite left
brace $B_1$ with non-zero socle. By Theorem~\ref{thm:perfect}, there
exists a perfect finite left brace $B_2$ with non-zero socle such
that $B_2/\Soc(B_2)\cong B_1$. Therefore
$\Soc(B_2/\Soc_1(B_2))\cong\Soc(B_1)\neq \{ 0\}$, and this proves
the result for $n=1$.

Suppose that $n>1$ and that there exists a perfect finite left brace
$B_{n}$ with $\Soc(B_{n}/\Soc_{n-1}(B_{n}))\neq\{0\}$. By
Theorem~\ref{thm:perfect}, there exists a perfect finite left brace
$B_{n+1}$ such that $\Soc(B_{n+1})\neq\{0\}$ and
$B_{n+1}/\Soc(B_{n+1})\cong B_{n}$. Hence
\begin{align*}
\Soc(B_{n+1}/\Soc_{n}(B_{n+1}))&\cong(\Soc((B_{n+1}/\Soc(B_{n+1}))/(\Soc_{n}(B_{n+1})/\Soc(B_{n+1}))\\
&\cong\Soc(B_{n}/\Soc_{n-1}(B_{n}))\neq\{0\},
\end{align*}
by the inductive hypothesis. By induction the result follows.
\end{proof}

\section{Skew braces of nilpotent type}
\label{nilpotent_type}

We first prove that if both groups of a finite skew left brace $A$ are
nilpotent, then $A$ can be decomposed as a direct product of skew left braces
of prime-power size. A similar result was proved by Byott in the context of
Hopf--Galois extensions, see~\cite[Theorem 1]{MR3030514}.

\begin{lem}\label{sum}
    Let $A$ be a skew left brace such that the additive
group is a direct sum of ideals $I_1,I_2$, that is $A=I_1+I_2$ and
$I_1\cap I_2=\{0\}$. Then the map $f:A\rightarrow I_1\times I_2$
defined by $f(a_1+a_2)=(a_1,a_2)$, for all $a_1\in I_1$ and $a_2\in
I_2$, is an isomorphism of skew left braces.
\end{lem}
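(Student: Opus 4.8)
The plan is to reduce everything to the single observation that, since $I_1$ and $I_2$ are ideals, $I_1*I_2=I_2*I_1=0$. Indeed, because $I_1$ is in particular a left ideal, Lemma~\ref{lem:A*I} gives $I_2*I_1\subseteq A*I_1\subseteq I_1$; because $I_2$ is an ideal, Lemma~\ref{lem:I*A} gives $I_2*I_1\subseteq I_2*A\subseteq I_2$. Hence $I_2*I_1\subseteq I_1\cap I_2=0$, and symmetrically $I_1*I_2=0$. This is the fact that forces both group structures to split along the same decomposition.

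First I would record the purely group-theoretic facts. Since $I_1,I_2$ are normal subgroups of $(A,+)$ with $I_1\cap I_2=0$ and $A=I_1+I_2$, every $a\in A$ has a unique expression $a=a_1+a_2$ with $a_i\in I_i$, the two factors commute additively, and $f$ is a bijection that is an isomorphism of additive groups; this is the standard internal-direct-product argument. Note also that each $I_i$, being a left ideal, is a subgroup of $(A,\circ)$, so $I_1\times I_2$ is a genuine direct product of skew left braces and the componentwise operations make sense.

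The heart of the argument is to show that $f$ respects $\circ$. Using $u\circ v=u+u*v+v$, for $u\in I_1$ and $v\in I_2$ the vanishing $u*v=0$ gives $u\circ v=u+v$, and likewise $v\circ u=v+u$; combined with additive commutativity this yields $u\circ v=v\circ u=u+v$ for all $u\in I_1$, $v\in I_2$. In particular $a_1+a_2=a_1\circ a_2$ and $b_1+b_2=b_1\circ b_2$. Then I would compute
\[
a\circ b=(a_1\circ a_2)\circ(b_1\circ b_2)=a_1\circ(a_2\circ b_1)\circ b_2=a_1\circ b_1\circ a_2\circ b_2=(a_1\circ b_1)+(a_2\circ b_2),
\]
where the middle equalities use that $a_2\in I_2$ and $b_1\in I_1$ commute under $\circ$, and the final equality uses $a_1\circ b_1\in I_1$, $a_2\circ b_2\in I_2$ together with the fact that $\circ$ coincides with $+$ across the two ideals. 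Since this is exactly the element whose image under $f$ is $(a_1\circ b_1,a_2\circ b_2)=f(a)\circ f(b)$, it follows that $f(a\circ b)=f(a)\circ f(b)$, and $f$ is an isomorphism of skew left braces.

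The only subtle point—and the step I expect to carry the real content—is the commutation analysis across $I_1$ and $I_2$, which rests entirely on $I_1*I_2=I_2*I_1=0$; once that is in hand, the additive and the multiplicative structures split simultaneously along the same decomposition, and the identification with the external direct product is forced. Everything else is the routine verification that an internal direct sum of additive groups, equipped with a compatible multiplicative structure, is an external direct product of skew left braces.
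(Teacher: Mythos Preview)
Your proof is correct and follows essentially the same route as the paper's: both reduce to showing that elements of $I_1$ and $I_2$ commute under $+$ and under $\circ$ (equivalently, that $I_1*I_2=I_2*I_1=0$), and then compute $(a_1+a_2)\circ(b_1+b_2)=a_1\circ b_1\circ a_2\circ b_2=(a_1\circ b_1)+(a_2\circ b_2)$ exactly as you do. The paper states the key containments $a_1*a_2,\,a_2*a_1,\,a_1+a_2-a_1-a_2\in I_1\cap I_2$ directly rather than invoking Lemmas~\ref{lem:A*I} and~\ref{lem:I*A}, but the argument is the same.
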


\begin{proof}
Recall that the operations of the skew left brace $I_1\times I_2$
are defined componentwise. Clearly $f$ is an isomorphism of the
additive groups of $A$ and $I_1\times I_2$. Let $a_1\in I_1$ and
$a_2\in I_2$. Since $I_1$ and $I_2$ are ideals we have that
$$a_1+a_2-a_1-a_2, a_1*a_2, a_2*a_1\in I_1\cap I_2=\{ 0\},$$
thus $a_1+a_2=a_2+a_1$ and $a_1\circ a_2=a_1+a_2=a_2\circ a_1$.
Hence
\begin{eqnarray*} f((a_1+a_2)\circ (b_1+b_2))&=&f(a_1\circ
a_2\circ b_1\circ b_2)=
f(a_1\circ b_1\circ a_2\circ b_2)\\
&=&f(a_1\circ b_1 + a_2\circ b_2)=(a_1\circ b_1 , a_2\circ b_2)\\
&=&(a_1,a_2)\circ (b_1,b_2)=f(a_1+a_2)\circ f(b_1 +b_2),
\end{eqnarray*}
for all $a_1,b_1\in I_1$ and $a_2,b_2\in I_2$. Therefore, the result
follows.
\end{proof}

\begin{thm}
    \label{direct}
    Let $n$ be a positive integer. Let $A$ be a skew left brace such that the additive
group is a direct sum of ideals $I_1,\dots ,I_n$, that is every
element $a\in A$ is uniquely written as $a=a_1+\dots +a_n$, with
$a_j\in I_j$ for all $j$. Then the map $f:A\rightarrow
I_1\times\dots\times I_n$ defined by $f(a_1+\dots +a_n)=(a_1,\dots
,a_n)$, for all $a_j\in I_j$, is an isomorphism of skew left braces.
\end{thm}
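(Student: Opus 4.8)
The plan is to induct on $n$, taking Lemma~\ref{sum} as the base case $n=2$ (the case $n=1$ being trivial, since then $f$ is the identity on $I_1$). For the inductive step with $n\geq 3$ I set $J=I_1+\cdots+I_{n-1}$ and aim to exhibit $A$ as a direct sum of the \emph{two} ideals $J$ and $I_n$. Lemma~\ref{sum} then reduces the $n$-fold problem to the pair $(J,I_n)$, and the inductive hypothesis applied to $J$ finishes the argument by splitting off $I_1,\dots,I_{n-1}$.

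First I would record the elementary consequences of the direct-sum hypothesis. Because each $I_k$ is an ideal and the sum is direct, for $a_i\in I_i$ and $a_j\in I_j$ with $i\neq j$ one has $a_i*a_j=\lambda_{a_i}(a_j)-a_j\in I_i\cap I_j=0$, whence $a_i\circ a_j=a_i+a_j=a_j+a_i=a_j\circ a_i$, exactly as in the proof of Lemma~\ref{sum}. From this I would prove, by a short induction on $m$, the key identity that for $x_k\in I_k$ (from distinct ideals) the additive sum agrees with the circle product, $x_1+\cdots+x_m=x_1\circ\cdots\circ x_m$, with the factors commuting in any order. The inductive step uses $\lambda_{x_1\circ\cdots\circ x_{m-1}}=\lambda_{x_1}\cdots\lambda_{x_{m-1}}$ together with $\lambda_{x_k}(x_m)=x_m$ for $k<m$ to conclude $(x_1\circ\cdots\circ x_{m-1})*x_m=0$, so that the partial product multiplies with $x_m$ additively.

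Next I would verify that $J$ is an ideal of $A$ via Lemma~\ref{lem:I*A}. Normality of $J$ in $(A,+)$ follows by telescoping $a+(x_1+\cdots+x_{n-1})-a=(a+x_1-a)+\cdots+(a+x_{n-1}-a)$ and normality of each $I_k$, while $\lambda$-invariance is immediate since $\lambda_a$ is additive and each $I_k$ is a left ideal. The only real obstacle is $J*A\subseteq J$: writing $a=a_1+\cdots+a_n$ with $a_k\in I_k$ and $x\in J$, additivity of $\lambda_x$ gives $x*a=\sum_{k=1}^n(x*a_k)$ with the $k$-th summand lying in $I_k$, so it suffices to show the $I_n$-component $x*a_n$ vanishes. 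This amounts to $\lambda_x(a_n)=a_n$, which is precisely the identity of the previous paragraph, since $x$ is a sum of elements of $I_1,\dots,I_{n-1}$ and therefore, being a circle product of such elements, acts trivially on $I_n$. As $A=J\oplus I_n$ is a direct sum of ideals, Lemma~\ref{sum} yields $A\cong J\times I_n$ via $a\mapsto(x,a_n)$ with $x=a_1+\cdots+a_{n-1}$.

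Finally I would apply the inductive hypothesis to $J$. The subgroups $I_1,\dots,I_{n-1}$ are ideals of the skew left brace $J$: they are normal in $(J,+)$ and $(J,\circ)$ because they are normal in the larger $A$, they are $\lambda$-invariant, and $I_k*J\subseteq I_k*A\subseteq I_k$; moreover their sum is $J$ and is direct. The inductive hypothesis then gives an isomorphism $J\cong I_1\times\cdots\times I_{n-1}$ sending $x\mapsto(a_1,\dots,a_{n-1})$, and composing it with the isomorphism $A\cong J\times I_n$ from Lemma~\ref{sum} produces an isomorphism $A\cong I_1\times\cdots\times I_n$ which is exactly the map $f$ of the statement. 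The main difficulty throughout is the sum-equals-product identity of the second paragraph; once it is available, every closure and homomorphism check reduces to a componentwise verification and is routine.
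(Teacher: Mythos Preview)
Your proposal is correct and follows essentially the same inductive scheme as the paper: set $J=I_1+\cdots+I_{n-1}$, apply Lemma~\ref{sum} to the pair $(J,I_n)$, then apply the inductive hypothesis to $J$ and compose. The paper simply asserts that $J$ is an ideal, whereas you carefully justify this via the sum-equals-product identity and Lemma~\ref{lem:I*A}; your extra care is warranted but does not change the underlying argument.
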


\begin{proof}
We shall prove the result by induction on $n$. For $n=1$, it is
clear. Suppose that $n>1$ and that the result is true for $n-1$. Let
$A_1=I_1+\dots +I_{n-1}$. Then $A_1$ is an ideal of $A$ and $A$ is
the direct sum of the ideals $A_1$ and $I_n$. By Lemma~\ref{sum},
the map $f_1: A\rightarrow A_1\times I_n$ defined by
$f(a+a_n)=(a,a_n)$, for all $a\in A_1$ and $a_n\in I_n$, is an
isomorphism of skew left braces. By the induction hypothesis, the
map
\[
    f_2:A_1\rightarrow I_1\times \dots\times I_{n-1},
    \quad
    f_2(a_1+\dots +a_{n-1})=(a_1,\dots ,a_{n-1}),
\]
is an isomorphism of skew left braces. Therefore $f=(f_2\times
\id)\circ f_1:A\rightarrow I_1\times\dots\times I_n$ is an
isomorphism of skew left braces and $f(a_1+\dots +a_n)=(a_1,\dots
,a_n)$, for all $a_j\in I_j$. Therefore, the result follows by
induction.
\end{proof}

\begin{cor}
    \label{cor:product}
    Let $A$ be a finite skew left brace such that $(A,+)$ and $(A,\circ)$ are nilpotent.
Let $I_1,\dots ,I_n$ be the distinct Sylow subgroups of the additive
group of $A$. Then $I_1,\dots ,I_n$ are ideals of $A$ and the map
$f:A\rightarrow I_1\times\dots\times I_n$ defined by $f(a_1+\dots
+a_n)=(a_1,\dots ,a_n)$, for all $a_j\in I_j$, is an isomorphism of
skew left braces.
\end{cor}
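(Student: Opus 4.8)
The plan is to reduce the statement to Theorem~\ref{direct}: once we know that the Sylow subgroups $I_1,\dots,I_n$ of $(A,+)$ are ideals of $A$ and that $(A,+)$ is their internal direct sum, Theorem~\ref{direct} yields immediately that $f$ is an isomorphism of skew left braces. So the work lies entirely in verifying that each $I_j$ is an ideal.

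First I would use the hypothesis that $(A,+)$ is finite nilpotent to write it as the internal direct product of its Sylow subgroups $I_1,\dots,I_n$. Here $I_j$ is the unique Sylow $p_j$-subgroup of $(A,+)$, of order $p_j^{k_j}$ equal to the $p_j$-part of $|A|$, and uniqueness forces $I_j$ to be characteristic, hence normal, in $(A,+)$; moreover every $a\in A$ is uniquely expressible as $a=a_1+\cdots+a_n$ with $a_j\in I_j$, which is precisely the direct-sum hypothesis required by Theorem~\ref{direct}. Next I would observe that each $I_j$ is a left ideal: every $\lambda_a$ lies in $\Aut(A,+)$, and a characteristic subgroup is carried onto itself by every automorphism, so $\lambda_a(I_j)=I_j$ for all $a\in A$.

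The key step, and the one I expect to be the main obstacle, is to promote each left ideal $I_j$ to an ideal, which requires controlling the multiplicative structure. I would use the fact recalled in the preliminaries that a left ideal is automatically a subgroup of the multiplicative group $(A,\circ)$. Since the set $I_j$ has order $p_j^{k_j}$, which is exactly the $p_j$-part of $|A|=|(A,\circ)|$, it follows that $I_j$ is a Sylow $p_j$-subgroup of $(A,\circ)$. Now the second nilpotency hypothesis enters: because $(A,\circ)$ is nilpotent, its Sylow subgroups are unique and normal, so $I_j$ is normal in $(A,\circ)$. Combining this with the normality of $I_j$ in $(A,+)$ already established, we get $a+I_j=I_j+a$ and $a\circ I_j=I_j\circ a$ for all $a\in A$, so by definition $I_j$ is an ideal of $A$.

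Finally, having shown that $(A,+)$ is the direct sum of the ideals $I_1,\dots,I_n$, I would invoke Theorem~\ref{direct} directly to conclude that $f\colon A\to I_1\times\cdots\times I_n$ is an isomorphism of skew left braces, which is exactly the assertion. The whole argument is structural rather than computational: the only delicate point is the order-counting that identifies the additive Sylow subgroup $I_j$ with a Sylow subgroup of $(A,\circ)$, which is what lets the two separate nilpotency hypotheses cooperate to force multiplicative normality.
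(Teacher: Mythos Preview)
Your proposal is correct and follows essentially the same approach as the paper's own proof: use nilpotency of $(A,+)$ to get that each Sylow subgroup $I_j$ is unique (hence characteristic, hence normal and $\lambda$-invariant, so a left ideal), then use that a left ideal is a subgroup of $(A,\circ)$ together with order counting to identify $I_j$ as the Sylow $p_j$-subgroup of $(A,\circ)$, which is normal by nilpotency of $(A,\circ)$; finally invoke Theorem~\ref{direct}. The paper's argument is organized identically, with the only cosmetic difference being that it does not explicitly use the word ``characteristic'' when justifying $\lambda_a(I_j)\subseteq I_j$.
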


\begin{proof}
Since $(A,+)$ is nilpotent, for every prime divisor $p$ of the order
of $A$, there is a unique Sylow $p$-subgroup $I$ of $(A,+)$. Hence $I$
is a normal subgroup of $(A,+)$, and $\lambda_a(b)\in I$ for all
$a\in A$ and $b\in B$. Thus $I$ is a left ideal of $A$ and thus it
is a Sylow $p$-subgroup of $(A,\circ)$. Since $(A,\circ)$ is
nilpotent, $I$ is the unique Sylow $p$-subgroup of $(A,\circ)$ and,
thus, it is normal in $(A,\circ)$. Therefore $I$ is an ideal of $A$.
Hence $I_1,\dots ,I_n$ are ideals of $A$ and clearly the additive
group of $A$ is the direct sum of $I_1,\dots ,I_n$. The result
follows by Theorem~\ref{direct}.
\end{proof}

Let $A$ be a skew left brace.  Let $G$ be the multiplicative group
of $A$ and $X$ be the additive group of $A$. Since $G$ acts on
$X$ by automorphisms, one forms the semidirect product
$\Gamma=X\rtimes G$ with multiplication
\[
    (x,g)(y,h)=(x+\lambda_g(y),g\circ h).
\]

Identifying each $g\in G$ with $(0,g)\in\Gamma$ and each $x\in X$
with $(x,0)\in\Gamma$, we see that
\begin{align*}
[g,x]&= gxg^{-1}x^{-1}=(0,g)(x,0)(0,g')(-x,0)\\
&=(\lambda_g(x),g)(-\lambda^{-1}_g(x),g')
=(\lambda_g(x)-x,0)=\lambda_g(x)-x=g*x.
\end{align*}

Let
\begin{equation}
\label{eq:repeated}
\begin{aligned}
    & X_0=X=A^1,\\
    & X_{n+1}=[G,X_n]=A^{n+2}\quad\text{for $n\geq0$.}
\end{aligned}
 \end{equation}

Thus the elements of the left series of $A$ are indeed iterated commutators of
the group $\Gamma$.  This observation has strong consequences. Our first
application is the following useful result, which was proved by Rump for
classical braces using different methods (see the corollary after Proposition 2
of~\cite{MR2278047}).

\begin{pro}
    \label{pro:pgroups}
    Let $p$ be a prime and $A$ be skew left brace of size $p^m$. Then $A$ is
    left nilpotent.
\end{pro}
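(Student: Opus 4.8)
The plan is to exploit the identification, established in equation~\eqref{eq:repeated}, of the left series of $A$ with an iterated-commutator series inside the semidirect product $\Gamma=X\rtimes G$. Concretely, we have $X_0=X=A^1$ and $X_{n+1}=[G,X_n]=A^{n+2}$, so that to show $A^m=0$ for some $m$ it suffices to show that the descending series $X_0\supseteq X_1\supseteq X_2\supseteq\cdots$ of subgroups of $\Gamma$ terminates at the trivial subgroup.

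First I would observe that since $A$ has size $p^m$, both the additive group $X=(A,+)$ and the multiplicative group $G=(A,\circ)$ are $p$-groups, hence the semidirect product $\Gamma=X\rtimes G$ is a finite $p$-group of order $p^{2m}$. The key structural fact I would invoke is that $X$ is a normal subgroup of $\Gamma$ (it is the kernel of the projection $\Gamma\to G$), so each $X_n=[G,X_{n-1}]$ sits inside $X$ and is normal in $\Gamma$. The heart of the argument is then that in a finite $p$-group, if $N$ is a nontrivial normal subgroup, then $[\Gamma,N]$ is properly contained in $N$; this is the standard fact that $N\cap Z(\Gamma)\ne0$ for any nontrivial normal subgroup $N$ of a $p$-group, which forces $[\Gamma,N]\subsetneq N$ as long as $N\ne0$.

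Applying this repeatedly to the normal subgroups $X_n\trianglelefteq\Gamma$, I would argue that whenever $X_n\ne0$ we have $X_{n+1}=[G,X_n]\subseteq[\Gamma,X_n]\subsetneq X_n$, so the chain $X_0\supsetneq X_1\supsetneq X_2\supsetneq\cdots$ strictly decreases until it reaches $0$. Since $X$ is finite, this must happen after finitely many steps, say $X_{n}=0$ for some $n$, which by~\eqref{eq:repeated} translates to $A^{n+1}=0$, i.e.\ $A$ is left nilpotent. A small point requiring care is that $[G,X_n]$, computed with $G$ embedded as $(0,g)$, really does equal $[\Gamma,X_n]$ here, or at least is contained in it and still strictly smaller than $X_n$; since $X$ is abelian-by-nothing need not hold, but $[X,X_n]\subseteq X$ is also a normal subgroup and the same $p$-group argument bounds it, so combining the $G$-part and the $X$-part of $\Gamma$ still yields strict descent.

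The main obstacle I expect is making the commutator identification clean: one must confirm that the equality $X_{n+1}=[G,X_n]=A^{n+2}$ from~\eqref{eq:repeated} already packages the additive-subgroup-generated-by-$*$-products into genuine group-theoretic commutators inside $\Gamma$, so that the purely group-theoretic lower-central-type descent in the $p$-group $\Gamma$ can be transported back to the brace. Granting~\eqref{eq:repeated}, the proof reduces to the elementary and well-known fact about strict descent of iterated commutators with a normal subgroup in a finite $p$-group, so no delicate brace-specific computation remains.
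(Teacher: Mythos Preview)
Your approach is the paper's: embed into the semidirect product $\Gamma=X\rtimes G$, observe that it is a finite $p$-group, and use~\eqref{eq:repeated} to transport nilpotency back to the left series of $A$. The paper's execution, however, is cleaner and avoids the issue you wrestle with in your last paragraph. Rather than claiming that each $X_n$ is normal in $\Gamma$ and invoking strict descent of $[\Gamma,N]$ for normal $N$, the paper simply notes the containment
\[
A^k=[G,\dots,G,X]\subseteq[\Gamma,\dots,\Gamma]=\gamma_k(\Gamma),
\]
and uses that $\gamma_k(\Gamma)=0$ for large $k$ since a finite $p$-group is nilpotent. This requires no normality of the intermediate $X_n$.

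Your assertion that each $X_n$ is normal in $\Gamma$ is in fact not justified: $[G,X_{n-1}]$ is normal in $\langle G,X_{n-1}\rangle$, but once $n\geq 2$ this subgroup need not be all of $\Gamma$, and on the brace side $A^{n}$ is only shown to be a left ideal (Proposition~\ref{pro:left_series}), not a normal additive subgroup. So the step ``$X_{n+1}\subseteq[\Gamma,X_n]\subsetneq X_n$'' is not available as stated. The paper's one-line containment into the lower central series of $\Gamma$ bypasses this entirely, making your paragraph about combining the $G$-part and the $X$-part unnecessary.
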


\begin{proof}
    Let $G$ be the multiplicative group of $A$ and $X$ be the additive group of
    $A$. Since the semidirect product $\Gamma=A\rtimes G$ is a $p$-group, it is
    nilpotent. Thus there exists $k$ such that the $k$-repeated commutator
    $[\Gamma,\Gamma,\dots,\Gamma]$, where $\Gamma$ appears $k$-times, is trivial. Since
    \[
        A^k=[G,\dots,G,X]\subseteq [\Gamma,\dots,\Gamma],
    \]
    it follows that $A$ is left nilpotent.
\end{proof}

The following results follow immediately from theorems of P.  Hall:

\begin{lem}
    Let $A$ be finite skew left brace such that $A^3=0$. Then the additive group of
    $A^2$ is abelian. In fact $A^2$ is a trivial brace.
\end{lem}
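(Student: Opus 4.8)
The plan is to split the statement into two claims: that $a\circ b=a+b$ for all $a,b\in A^2$, so that $A^2$ is a trivial skew left brace, and that the additive group of $A^2$ is abelian, which promotes this trivial skew left brace to a trivial brace. The first claim is immediate and needs no hypothesis beyond $A^3=0$: since $A^2\subseteq A$ we have $A^2*A^2\subseteq A*A^2=A^3=0$, so $x*y=0$ and hence $x\circ y=x+x*y+y=x+y$ for all $x,y\in A^2$. As $A^2$ is a left ideal it is a subgroup of both $(A,+)$ and $(A,\circ)$, so this already exhibits $A^2$ as a trivial sub skew left brace. It remains to prove that $(A^2,+)$ is abelian.

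For the abelian claim I would pass to the semidirect product $\Gamma=X\rtimes G$ with $X=(A,+)$ and $G=(A,\circ)$, and use the identifications from \eqref{eq:repeated}: there $A^2=[G,X]$ and $A^3=[G,A^2]$, where $[g,x]=g*x$. Thus $A^3=0$ says precisely that $G$ centralises $N:=[G,X]$ in $\Gamma$, and the goal becomes the purely group-theoretic statement that $N=[G,X]$ is abelian whenever $G$ centralises $[G,X]$. This is exactly the kind of assertion that follows from P.\ Hall's commutator calculus, and the main work of the proof is to assemble the relevant identities in the right order.

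Concretely, I would first apply the three subgroups lemma to $G,X,G$: since $[[G,X],G]=[N,G]=1$ and $[[X,G],G]=[N,G]=1$, it gives $[[G,G],X]=1$, i.e.\ $G'$ centralises $X$, equivalently $\lambda_g=\id$ for $g\in G'$. Writing $d_g(x)=[g,x]\in N$, the commutator identity $d_g(xy)=d_g(x)\,{}^x d_g(y)$ shows $N\trianglelefteq X$, and the identity $[gh,x]={}^g[h,x]\,[g,x]$ together with $[G,N]=1$ gives $d_{gh}(x)=d_h(x)\,d_g(x)$. Comparing $d_{gh}(x)$ and $d_{hg}(x)$ and using $d_c(x)=1$ for $c\in G'$ then yields $[d_g(x),d_h(x)]=1$ for fixed $x$. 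The crucial step is to expand $d_{gh}(xy)$ in the two orders, as $d_{gh}(x)\,{}^x d_{gh}(y)$ and as $d_h(xy)\,d_g(xy)$, and to cancel; this produces $[d_g(x),{}^x d_h(y)]=1$ for all $g,h,x,y$. Since ${}^x d_h(y)=d_h(x)^{-1}d_h(xy)$ and $d_g(x)$ already commutes with $d_h(x)$, it follows that $d_g(x)$ commutes with $d_h(xy)$, hence with every generator $d_h(z)$ of $N$. Therefore $N=[G,X]=A^2$ is abelian.

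I expect this last paragraph to be the main obstacle: the passage to $\Gamma$ and the first claim are routine, but forcing $[G,X]$ to be abelian requires chaining the three subgroups lemma, the cocycle relation, and the two-order expansion of $d_{gh}(xy)$, and getting the cancellations to line up. The cleanest presentation is to invoke the packaged results of P.\ Hall from \cite{Hall} rather than to reprove these identities by hand. Note that finiteness is not essential to this group-theoretic core; it is simply the setting in which Hall's structural results are cited, and it is needed for the surrounding results in this section.
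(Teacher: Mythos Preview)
Your proposal is correct and follows essentially the same approach as the paper: both arguments observe that $(A^2)^2\subseteq A^3=0$ gives the trivial-brace statement, and both obtain the abelianness of $(A^2,+)$ from P.\ Hall's results in~\cite{Hall}. The paper simply cites Hall's Theorem~6 for the abelian part (and presents the two claims in the opposite order), whereas you unpack the underlying commutator calculus in $\Gamma=X\rtimes G$; you yourself note that invoking Hall directly is the cleaner route, which is exactly what the paper does.
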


\begin{proof}
The first part follows by \cite[Theorem~6]{Hall}. Note that
$(A^2)^2\subseteq A^3=0$, hence $a\circ b=a+b$ for all $a,b\in A^2$, and
the result follows.
\end{proof}

\begin{thm}
    \label{thm:A2}
    Let $A$ be left nilpotent skew left brace. Then the following statements
    hold:
    \begin{enumerate}
        \item The additive group of $A^2$ is locally nilpotent.
        \item The multiplicative group of $A/\ker\lambda$ is locally nilpotent.
    \end{enumerate}
\end{thm}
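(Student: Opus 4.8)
The plan is to exploit the semidirect product construction $\Gamma = X \rtimes G$ established just before Proposition~\ref{pro:pgroups}, where the key identity $[G, X_n] = A^{n+2}$ from~\eqref{eq:repeated} shows that the terms of the left series appear as iterated commutators inside $\Gamma$. Left nilpotency of $A$ means $A^{k}=0$ for some $k$, i.e. $[G,\dots,G,X]=0$ with $G$ appearing $k-1$ times. The strategy for both parts is to invoke the appropriate theorem of P.~Hall from~\cite{Hall} on groups whose lower central terms (or commutator subgroups of bounded weight) satisfy nilpotency conditions, applied to the group $\Gamma$ or to a suitable section of it.

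For part~(1), the first step is to recognise that the additive group of $A^2$ equals the subgroup $X_1 = [G,X]$ of $\Gamma$, and more generally the additive subgroups $A^{n+1}$ coincide with $X_n$. Since $A$ is left nilpotent, these commutator subgroups vanish after finitely many steps, so $X$ is nilpotent \emph{under the action of $G$}; this places $\Gamma$ in the class of groups to which Hall's results on the nilpotency of terms of the lower central series apply. I would identify $A^2 = X_1$ as lying inside a term of a series whose factors are central in an appropriate sense, and then cite Hall's theorem (the same one already used in the preceding lemma, \cite[Theorem~6]{Hall}, or its local analogue) to conclude that $(A^2,+)$ is locally nilpotent. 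The point is that Hall's theorem gives local nilpotency of the derived/commutator subgroup once the quotient is nilpotent, and here the additive structure of $A^2$ is exactly such a commutator object inside $\Gamma$.

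For part~(2), the approach is dual: I would pass to $\bar A = A/\ker\lambda$, so that $\lambda$ becomes a faithful action and the multiplicative group $(\bar A, \circ)$ embeds, via $g \mapsto \lambda_g$, into $\Aut(\bar X, +)$ where $\bar X$ is the additive group of $\bar A$. The left nilpotency of $A$ descends to $\bar A$ (quotients of left nilpotent braces are left nilpotent), so $\bar A^{m}=0$ for some $m$, which in the language of the semidirect product says that the action of $(\bar A,\circ)$ on $\bar X$ is nilpotent of bounded class. A group of automorphisms acting nilpotently on an abelian-by-nilpotent (or suitably structured) group is itself locally nilpotent by Hall's stability-group theorem; I would invoke precisely that result to conclude $(\bar A, \circ) = (A/\ker\lambda, \circ)$ is locally nilpotent.

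The main obstacle will be verifying that the hypotheses of Hall's theorems are met in the precise form needed. Hall's nilpotency and stability results require the relevant sections to sit inside a group whose lower central series stabilises or whose factors are abelian of bounded class, and one must check that the identification $A^{n+1}=X_n$ together with left nilpotency genuinely supplies a \emph{finite} chain of commutator conditions rather than merely a local one. Care is also needed because $X$ need not be abelian (the brace is only of left-brace or general skew type), so the correct Hall theorem is the version for groups with a nilpotent term in the lower central series acting on a possibly non-abelian base; pinning down the exact citation and confirming that local rather than global nilpotency is the strongest available conclusion is where the argument must be handled most carefully.
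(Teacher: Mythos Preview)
For part~(1) your plan is exactly the paper's: identify $A^{n+1}=[G,\dots,G,X]$ inside $\Gamma=X\rtimes G$ and invoke Hall's theorem. (The paper cites \cite[Theorem~4]{Hall} for both parts of the theorem; Theorem~6 is what was used in the preceding lemma, so your hesitation there is just a citation wobble.)

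For part~(2) there is a concrete gap in your execution. You form $\bar A=A/\ker\lambda$ as a skew left brace, speak of its additive group $\bar X$, and then say ``left nilpotency of $A$ descends to $\bar A$''. But $\ker\lambda$ is only known to be a normal subgroup of $(A,\circ)$; in a general skew left brace it need not be $\lambda$-invariant nor normal in $(A,+)$, so $A/\ker\lambda$ carries no brace structure and the sentence about its left series is meaningless. Your underlying idea survives once you drop the passage to a quotient brace: the action you actually want is that of $(A,\circ)/\ker\lambda$ on the \emph{original} additive group $(A,+)$, and the chain of left ideals $A=A^1\supseteq A^2\supseteq\cdots\supseteq A^k=0$ is a (not necessarily normal) series stabilised by this action, so Hall's stability theorem applies directly and even yields nilpotency rather than mere local nilpotency.

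The paper takes a different route for~(2): it stays inside $\Gamma$, sets $K=[G,X]G$ and $H=[G,X]X=X$, lets $C$ be the centraliser of $H$ in $K$, applies \cite[Theorem~4]{Hall} to conclude that $K/C$ is locally nilpotent, and then observes $G\cap C=\ker\lambda$ so that $G/\ker\lambda\cong(GC)/C\leq K/C$. Your repaired argument is more direct; the paper's has the virtue of invoking a single Hall result uniformly for both claims.
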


\begin{proof}
    Since each element of the left series of $A$ is a repeated commutator, the
    first claim follows from Hall's theorem \cite[Theorem~4]{Hall}.
    To prove the second claim, we use the notation above
    Proposition~\ref{pro:pgroups}. Let $K=[G,X]G\subseteq \Gamma$ and
    $H=[G,X]X$. Let $C$ be the centralizer of $H$ in $K$. Then by
    \cite[Theorem~4]{Hall}, $K/C$ is locally nilpotent. Note
    that, since $X$ is normal in $\Gamma$, $H=X$. Hence $G\cap C$ is the
    centralizer of $X$ in $G$, that is
    \begin{eqnarray*}G\cap C&=&\{ g\in G\mid gxg^{-1}=x, \text{ for all } x\in X\}\\
    &=&\{ g\in A\mid \lambda_g(x)=x, \text{ for all } x\in A\}=\ker\lambda.
    \end{eqnarray*}
     Thus $(GC)/C\cong G/(G\cap
     C)=G/\ker\lambda$ is locally nilpotent.
\end{proof}

We shall introduce some notation. Let $A$ be a skew left brace.
We denote by $\gamma^+(a,b)=a+b-a-b$ the commutator of $a,b$ in
$(A,+)$, for all $a,b\in A$. Let $B,C$ be two subgroups of $(A,+)$.
We define $\gamma^+(B,C)=\langle \gamma^+(b,c)\mid b\in B,\; c\in
C\rangle_+$, the additive subgroup generated by the elements
$\gamma^+(b,c)$, for $b\in B$ and $c\in C$. We also write
$*(a,b)=a*b$ for all $a,b\in A$, and $*(B,C)=B*C=\langle
*(b,c)\mid b\in B,\; c\in C\rangle_+$.
Let $M$ be the free monoid with basis $\{ \gamma^+, *\}$. Then the
elements of $M$ are words in the alphabet $\{ \gamma^+, *\}$, that
is, if $m\in M$ then $$m=\epsilon_1\epsilon_2\cdots \epsilon_s,$$
for some non-negative integer $s$ and $\epsilon_i\in \{ \gamma^+,
*\}$. In this case, we say that $m$ has degree $s$ and we write
$\deg(m)=s$. Furthermore, if $s>0$, we define
$$m(a_1a_2\dots a_{s+1})=\epsilon_1(a_1,\epsilon_2(a_2,\dots(\epsilon_s(a_s,a_{s+1}))\dots )),$$
for all $a_1,\dots ,a_{s+1}\in A$, and if $A_1,\dots, A_{s+1}$ are
subgroups of $(A,+)$, we define
    $$m(A_1A_2\dots A_{s+1})=\epsilon_1(A_1,\epsilon_2(A_2,\dots(\epsilon_s(A_s,A_{s+1}))\dots )).$$
Finally we denote by $A_1(t)$ the word $A_1A_1\dots A_1$ of length
$t$ in the letter $A_1$. We order $M$ with the degree-lexicographic
order, extending $*<\gamma^+$. Note that if $m_2>m_1$ are elements
of $M$, then
\begin{eqnarray*}\lefteqn{m_2(a_1\dots
a_{\deg(m_2)+1})+m_1(b_1\dots b_{\deg(m_1)+1})}\\
&=&m_1(b_1\dots b_{\deg(m_1)+1})+m_2(a_1\dots
a_{\deg(m_2)+1})\\
&&-\gamma^+(-m_1(b_1\dots b_{\deg(m_1)+1}),-m_2(a_1\dots
a_{\deg(m_2)+1})).
\end{eqnarray*}
In particular, the elements of the additive subgroup generated by
$$\{m(A(\deg(m)+1))\mid m\in M, \text{ with }\deg(m)\geq t\}$$
are of the form $a_1+a_2+\dots +a_s$, where $a_i\in
m_i(A(\deg(m_i)+1))$, $\deg(m_1)\geq t$ and $m_1<\dots <m_s$. We
denote this additive subgroup by
$$\sum_{\{m\in M\mid \deg(m)\geq t\}}m(A(\deg(m)+1)).$$

\begin{lem}
\label{lem:series} Let $A$ be a skew left brace. Let $G_1=\ker
\lambda$, and for $i>1$, let $G_i=[A,G_{i-1}]=\langle a\circ b\circ
a'\circ b'\mid a\in A,\; b\in G_{i-1}\rangle$. Let $M$ be the free
monoid with basis $\{ \gamma^+, *\}$. Then
$$G_n\subseteq \sum_{\{m\in M\mid \deg(m)\geq
n-1\}}m(A(\deg(m)+1)).$$
\end{lem}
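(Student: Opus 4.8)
The plan is to induct on $n$. Throughout, write $S_t=\sum_{\{m\in M\mid \deg(m)\geq t\}}m(A(\deg(m)+1))$, so that the assertion is $G_n\subseteq S_{n-1}$; the base case $n=1$ is the trivial inclusion $G_1=\ker\lambda\subseteq A=S_0$, since the empty word contributes $A$. The single most useful preliminary observation is that $\ker\lambda$ is a normal subgroup of $(A,\circ)$, whence $[A,\ker\lambda]\subseteq\ker\lambda$ and therefore $G_n\subseteq\ker\lambda$ for every $n$. On $\ker\lambda$ one has $\lambda_b=\id$, so $b\circ c=b+c$ and $b'=-b$; in particular the operation $\circ$ restricted to $\ker\lambda$ coincides with $+$. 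Consequently $G_n=[A,G_{n-1}]$ is exactly the \emph{additive} subgroup generated by the commutators $a\circ b\circ a'\circ b'$ with $a\in A$ and $b\in G_{n-1}$. Since each $S_t$ is by construction an additive subgroup, it suffices to show that every such commutator lies in $S_{n-1}$.

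The key step is an explicit formula for these commutators. Let $a\in A$ and $b\in G_{n-1}\subseteq\ker\lambda$. Since $\lambda_b=\id$ we have $b*a'=\lambda_b(a')-a'=0$, so \eqref{eq:trick_lambda} gives $a\circ b\circ a'=a+\lambda_a(b)-a$, an element which again lies in $\ker\lambda$ (as $\lambda_{a\circ b\circ a'}=\lambda_a\lambda_b\lambda_a^{-1}=\id$). Composing on the right with $b'=-b$ and using $\circ=+$ on $\ker\lambda$, I obtain
\[
a\circ b\circ a'\circ b'=(a+\lambda_a(b)-a)-b=\gamma^+(a,a*b)+a*b+\gamma^+(a,b),
\]
the last equality following by substituting $\lambda_a(b)=a*b+b$ and regrouping via $a+x-a=\gamma^+(a,x)+x$. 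By the inductive hypothesis $b\in G_{n-1}\subseteq S_{n-2}$, so it remains only to place each of the three summands in $S_{n-1}$: the terms $a*b$ and $\gamma^+(a,b)$ apply one operation to $b$, and $\gamma^+(a,a*b)$ applies two, so they should land in $S_{n-1}$ and in $S_n\subseteq S_{n-1}$ respectively.

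What genuinely remains is the degree-raising closure property: for every $a\in A$ and $t\geq0$,
\[
a*S_t\subseteq S_{t+1}\qquad\text{and}\qquad \gamma^+(a,S_t)\subseteq S_{t+1}.
\]
I would prove this by expanding $a*x$ and $\gamma^+(a,x)$ for $x=c_1+\cdots+c_s\in S_t$, written in the canonical sorted form recorded before the statement, using the distributivity identity $a*(u+v)=a*u+u+a*v-u$ and its additive analogue $\gamma^+(a,u+v)=\gamma^+(a,u)+u+\gamma^+(a,v)-u$. Applying $*$ or $\gamma^+$ to a single right-nested word $c_j\in m_j(A(\deg(m_j)+1))$ merely prepends a letter, yielding a word of degree $\deg(m_j)+1\geq t+1$, while the conjugating factors $u+(\cdots)-u$ introduced by distributivity produce further $\gamma^+$-expressions of strictly higher degree. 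The main obstacle is precisely the bookkeeping here: these expansions create nested (tree-shaped) combinations of word-values rather than canonical sums of right-nested words, and one must reduce them to canonical form using the reordering formula stated just above the lemma, verifying that each swap contributes only terms $m(A(\deg(m)+1))$ of degree $\geq t+1$ and therefore never lowers the degree below $t+1$. Granting this closure property, the three summands above all lie in $S_{n-1}$, hence so does each commutator, and by the first paragraph the induction closes.
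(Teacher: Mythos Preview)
Your proposal is correct and follows essentially the same route as the paper: the same commutator identity $a\circ b\circ a'\circ b'=\gamma^+(a,a*b)+a*b+\gamma^+(a,b)$ for $b\in\ker\lambda$ (this is exactly the paper's $s=1$ computation), the same outer induction on $n$, and the same inner bookkeeping over the canonical sorted sum for $b\in S_{n-2}$. The only organizational difference is that the paper carries out that bookkeeping directly by induction on the number $s$ of summands of $b$, whereas you package it as the closure statements $a*S_t\subseteq S_{t+1}$ and $\gamma^+(a,S_t)\subseteq S_{t+1}$; the content of the two inductions is identical, and the paper also uses (at the very end) that $S_{n-1}$ is a left ideal in place of your observation that $\circ=+$ on $\ker\lambda$ to pass from generators to all of $G_n$.
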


\begin{proof}
Let $a\in \ker\lambda$ and $g\in A$. Note that
\begin{equation}
\label{eq:ker}
\begin{aligned}
g\circ a\circ g'\circ a'&=g\circ (a+ g')+ a'\\
&=g\circ a-g-a=g+\lambda_g(a)-g-a.
\end{aligned}
\end{equation}
We shall prove the result by induction on $n$. For $n=1$,
$$G_1=\ker\lambda\subseteq A=\sum_{\{m\in M\mid \deg(m)\geq 0\}}m(A(\deg(m)+1)).$$
Let $n>1$ and suppose that
$$G_{n-1}\subseteq \sum_{\{m\in M\mid \deg(m)\geq n-2\}}m(A(\deg(m)+1)).$$
Let $g\in A$ and $a\in G_{n-1}$. Then since $G_{n-1}$ is a subgroup
of $\ker\lambda$, by (\ref{eq:ker}) we have
$$g\circ a\circ g'\circ a'=g+\lambda_g(a)-g-a.
$$
Let $a_m\in m(A(\deg(m)+1))$ be such that
$$a=a_{m_1}+a_{m_2}+\dots +a_{m_{s}},$$
with $\deg(m_1)\geq n-2$ and $m_1<\dots <m_s$.  We have
\begin{eqnarray*}
g\circ a\circ g'\circ a'&=&g+\lambda_g(a)-g-a\\
&=&g+\lambda_g(a_{m_1}+a_{m_2}+\dots +a_{m_s})-g-(a_{m_1}+a_{m_2}+\dots +a_{m_s})\\
&=&g+\lambda_g(a_{m_1})+\dots +\lambda_g(a_{m_s})-g-(a_{m_1}+a_{m_2}+\dots +a_{m_s})\\
&=&g+(g*a_{m_1}+a_{m_1})+\dots +(g*a_{m_s}+a_{m_s})-g\\
&&-(a_{m_1}+a_{m_2}+\dots +a_{m_s}).
\end{eqnarray*}
We shall prove that
\begin{eqnarray*}
\lefteqn{g+(g*a_{m_1}+a_{m_1})+\dots +(g*a_{m_s}+a_{m_s})-g}\\
&&-(a_{m_1}+a_{m_2}+\dots +a_{m_s})\in\sum_{\{m\in M\mid \deg(m)\geq
n-1\}}m(A(\deg(m)+1))
\end{eqnarray*}
by induction on $s$. For $s=1$ we have
\begin{eqnarray*}
g+g*a_{m_1}+a_{m_1}-g-a_{m_1}&=&\gamma^+(g,g*a_{m_1})+g*a_{m_1}+g+a_{m_1}-g-a_{m_1}\\
&=&\gamma^+(g,g*a_{m_1})+g*a_{m_1}+\gamma^+(g,a_{m_1}).
\end{eqnarray*}
Since $\gamma^+(g,g*a_{m_1})\in \gamma^+*m_1(A(\deg(m_1)+3))$,
$g*a_{m_1}\in *m_1(A(\deg(m_1)+2))$ and $\gamma^+(g,a_{m_1})\in
\gamma^+m_1(A(\deg(m_1)+2))$, we have that
$$g+g*a_{m_1}+a_{m_1}-g-a_{m_1}\in\sum_{\{m\in M\mid
\deg(m)\geq n-1\}}m(A(\deg(m)+1)).$$

Suppose that $s>1$ and $g+(g*a_{m_1}+a_{m_1})+\dots
+(g*a_{m_{s-1}}+a_{m_{s-1}})-g-(a_{m_1}+a_{m_2}+\dots
+a_{m_{s-1}})\in \sum_{\{m\in M\mid \deg(m)\geq
n-1\}}m(A(\deg(m)+1)).$

We have that
\begin{eqnarray*}
\lefteqn{g+(g*a_{m_1}+a_{m_1})+\dots +(g*a_{m_s}+a_{m_s})-g}\\
&&-(a_{m_1}+a_{m_2}+\dots +a_{m_s})\\
&=&g+(g*a_{m_1}+a_{m_1})+\dots +(g*a_{m_{s-1}}+a_{m_{s-1}})\\
&&+g*a_{m_{s}}+a_{m_{s}}-g-a_{m_s}+g-g-(a_{m_1}+a_{m_2}+\dots +a_{m_{s-1}})\\
&=&g+(g*a_{m_1}+a_{m_1})+\dots +(g*a_{m_{s-1}}+a_{m_{s-1}})\\
&&+g*a_{m_{s}}-\gamma^+(-g,a_{m_{s}})-g-(a_{m_1}+a_{m_2}+\dots +a_{m_{s-1}})\\
&=&g+(g*a_{m_1}+a_{m_1})+\dots +(g*a_{m_{s-1}}+a_{m_{s-1}})-g\\
&&-(a_{m_1}+a_{m_2}+\dots +a_{m_{s-1}})\\
&&+g*a_{m_{s}}-\gamma^+(-g,a_{m_{s}})\\
&&-\gamma^+(a_{m_1}+a_{m_2}+\dots
+a_{m_{s-1}}+g,\gamma^+(-g,a_{m_{s}})-g*a_{m_{s}})\\
&&\in \sum_{\{m\in M\mid \deg(m)\geq n-1\}}m(A(\deg(m)+1)).
\end{eqnarray*}
Hence
$$g\circ a\circ g'\circ a'\in \sum_{\{m\in M\mid \deg(m)\geq
n-1\}}m(A(\deg(m)+1)).$$

Note that $\sum_{\{m\in M\mid \deg(m)\geq n-1\}}m(A(\deg(m)+1))$ is
a left ideal of $A$. Therefore
$$G_{n}\subseteq \sum_{\{m\in M\mid
\deg(m)\geq n-1\}}m(A(\deg(m)+1)),$$ and the result follows by
induction.
\end{proof}

The following result generalizes \cite[Theorem~1]{MR3814340}.

\begin{thm}
    \label{thm:left_nilpotent=nilpotent}
    Let $A$ be a finite skew left brace with nilpotent additive group. Then $A$ is
    left nilpotent if and only if the multiplicative group of $A$ is nilpotent.
\end{thm}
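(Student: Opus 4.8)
The plan is to prove the two implications separately; the reverse one is short, while the forward one rests on the monoid estimate of Lemma~\ref{lem:series}.

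For the implication that nilpotency of $(A,\circ)$ forces left nilpotency, I would argue directly from the decomposition results. Since $(A,+)$ and $(A,\circ)$ are both finite nilpotent groups, Corollary~\ref{cor:product} applies and writes $A$ as a direct product $I_1\times\cdots\times I_n$ of its additive Sylow subgroups, each of prime-power order. By Proposition~\ref{pro:pgroups} every $I_j$ is left nilpotent, and by Lemma~\ref{lem:left_nilpotent:x} a finite direct product of left nilpotent skew left braces is left nilpotent; hence so is $A$.

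For the converse, suppose $A$ is left nilpotent, so $A^{\gamma}=0$ for some $\gamma$, and fix $d$ with $\gamma^+_{d+1}(A)=0$, which is possible since $(A,+)$ is nilpotent. First I would push the lower central series of $(A,\circ)$ down to $\ker\lambda$: by Theorem~\ref{thm:A2}(2) the group $(A/\ker\lambda,\circ)$ is locally nilpotent, hence nilpotent since it is finite, so $\gamma_c(A,\circ)\subseteq\ker\lambda$ for some $c$. Writing $G_1=\ker\lambda$ and $G_{i+1}=[A,G_i]$ as in Lemma~\ref{lem:series}, an immediate induction gives $\gamma_{c+k}(A,\circ)\subseteq G_{k+1}$, so it suffices to show $G_n=0$ for some $n$. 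By Lemma~\ref{lem:series} we have $G_n\subseteq\sum_{\deg(m)\ge n-1}m(A(\deg(m)+1))$, and the whole problem reduces to the combinatorial claim that every word $m$ in the alphabet $\{\gamma^+,*\}$ of sufficiently large degree satisfies $m(A(\deg(m)+1))=0$.

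The technical heart, and the main obstacle, is precisely this claim, because the two operations interact: the left-series terms $A^k$ are left ideals but need not be normal in $(A,+)$, so one cannot simply bound a word by its total number of $*$-letters. The resolution I would use is to separate the two vanishing mechanisms. Tracking the number of $\gamma^+$-letters: since each $\gamma^+_i(A)$ is a left ideal and $[A,\gamma^+_i(A)]_+=\gamma^+_{i+1}(A)$, an inside-out induction yields $m(A(\deg(m)+1))\subseteq\gamma^+_{q+1}(A)$, where $q$ is the number of $\gamma^+$-letters of $m$; hence $q\ge d$ forces the word to vanish. Tracking instead maximal blocks of consecutive $*$-letters: a block of length $\ell$ sends any element of $A=A^1$ into $A^{\ell+1}$, because $A*A^k=A^{k+1}$; so if some maximal $*$-block has length $\ell\ge\gamma-1$, the corresponding inner sub-expression already lies in $A^{\gamma}=0$, and since $a*0=0$ and $\gamma^+(a,0)=0$ the whole word vanishes. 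Finally, a word escaping both mechanisms has at most $d-1$ letters $\gamma^+$, hence at most $d$ maximal $*$-blocks, each of length at most $\gamma-2$, so its degree is at most $d(\gamma-1)-1$. Consequently every word of degree at least $d(\gamma-1)$ vanishes, hence $\sum_{\deg(m)\ge d(\gamma-1)}m(A(\deg(m)+1))=0$; by Lemma~\ref{lem:series} this gives $G_{d(\gamma-1)+1}=0$, so $\gamma_{c+d(\gamma-1)}(A,\circ)=0$ and $(A,\circ)$ is nilpotent.
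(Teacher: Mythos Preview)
Your proof is correct and follows the same architecture as the paper's: the backward implication via Corollary~\ref{cor:product}, Proposition~\ref{pro:pgroups} and Lemma~\ref{lem:left_nilpotent:x} is identical, and the forward implication is the same reduction through Theorem~\ref{thm:A2}(2) and Lemma~\ref{lem:series} to a pigeonhole argument on words in $\{\gamma^+,*\}$. Your two vanishing mechanisms (counting $\gamma^+$-letters via the characteristic subgroups $\gamma^+_i(A)$, and finding a long $*$-block forcing a value in $A^{\gamma}=0$) are exactly the paper's, with a marginally sharper degree bound $d(\gamma-1)$ in place of the paper's $n_1n_2$.
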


\begin{proof}
    Let us first assume that $(A,\circ)$ and $(A,+)$ are nilpotent.
    By Corollary~\ref{cor:product}, the skew left brace $A$ is the direct product of skew left braces with
    prime-power orders. By Proposition~\ref{pro:pgroups} all such skew left braces are
    left nilpotent, hence $A$ is left nilpotent by Lemma~\ref{lem:left_nilpotent:x}.

    Suppose now that $(A,+)$ is nilpotent and $A$ is left nilpotent.
    There exist positive integers $n_1,n_2$ such that $A^{n_1}=0$ and $\gamma_{n_2}^+(A)=0$, where $\gamma_{j+1}^+(A)=(\gamma^+)^j(A(j+1))$,
    using the notation above Lemma~\ref{lem:series}. By
    Theorem~\ref{thm:A2}, we know that the multiplicative group of
    $A/\ker\lambda$ is nilpotent. Let $\gamma_1(A)=A$ and for $i>1$
    let
    \[
    \gamma_i(A)=[A,\gamma_{i-1}(A)]=\langle a\circ b\circ
    a'\circ b'\mid a\in A,\; b\in \gamma_{i-1}(A)\rangle.
    \]
    Thus there exists a positive integer $k$ such that
    $\gamma_k(A)\subseteq\ker\lambda$. Using the notation in the
    proof of Lemma~\ref{lem:series}, we have that $\gamma_{k+j}(A)\subseteq
    G_{j+1}$ for every nonnegative integer $j$. Hence, by
    Lemma~\ref{lem:series}
    $$\gamma_{k+n_1n_2}(A)\subseteq G_{n_1n_2+1}\subseteq \sum_{\{m\in M\mid \deg(m)\geq
    n_1n_2\}}m(A(\deg(m)+1)).$$
    Let $m\in M$ be an element with $\deg(m)\geq n_1n_2$. Note that
    if $\gamma^+$ appears $t$ times in $m$, then $m(A(\deg(m)+1))\subseteq
    (\gamma^+)^t(A(t+1))$. In particular, if $t\geq n_2$, then
    $m(A(\deg(m)+1))=0$.
    Suppose that $\gamma^+$ appears at most $n_2-1$ times in $m$. In
    this case, there exist $m_1,m_2\in M$ such that
    $m=m_1(*)^{n_1}m_2$. In this case,
    \begin{align*}
    m(A(\deg(m)+1))&=m_1(*)^{n_1}(A(\deg(m_1)+n_1)m_2(A(\deg(m_2)+1)))\\
    &\subseteq  m_1(*)^{n_1}(A(\deg(m_1)+n_1+1))\\
    &=m_1(A(\deg(m_1)A^{n_1}))=0.
    \end{align*}
    Hence $\gamma_{k+n_1n_2}(A)=0$.
    Therefore the multiplicative group of $A$ is nilpotent, and the
    result follows.
\end{proof}

The assumption on the nilpotency of the additive group in
Theorem~\ref{thm:left_nilpotent=nilpotent} is needed (see
Example~\ref{ex:trivial}).

\begin{cor}
    Let $A$ be a finite skew left brace of size $p^n$ for some prime number $p$
    and some positive integer $n$.  Then either $A$ is the trivial brace of
    order $p$ or it is not simple.
\end{cor}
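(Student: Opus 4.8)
The plan is to prove the contrapositive formulation: assuming $A$ is \emph{simple} (its only ideals are $0$ and $A$), I will show that $A$ must be the trivial brace of order $p$. Since $A$ is nonzero of size $p^n$, this will establish the stated dichotomy.

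First I would invoke Proposition~\ref{pro:pgroups}: because $|A| = p^n$, the brace $A$ is left nilpotent, so $A^m = 0$ for some $m\geq 1$. Next recall that $A^2 = A^{(2)}$, so $A^2$ is an ideal by Proposition~\ref{pro:right_series}. Simplicity forces $A^2 \in \{0, A\}$. The possibility $A^2 = A$ is excluded by left nilpotency: a one-line induction shows that $A^2 = A$ implies $A^k = A$ for all $k$ (indeed $A^{k+1} = A*A^k = A*A = A^2 = A$), contradicting $A^m = 0$ since $A \neq 0$. Hence $A^2 = 0$, which means $a*b = 0$ for all $a,b\in A$, equivalently $\lambda_a = \id$ for every $a$; therefore $a\circ b = a + \lambda_a(b) = a + b$ and $A$ is a trivial brace.

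It remains to analyze the ideals of the trivial brace $A$. Here the operations $+$ and $\circ$ coincide and every $\lambda_a$ is the identity, so every additive subgroup is a left ideal, and the conditions $a\circ I = I\circ a$, $a+I = I+a$ both reduce to normality in $(A,+)$; thus the ideals of $A$ are exactly the normal subgroups of $(A,+)$. Now $(A,+)$ is a $p$-group of order $p^n$. If $n\geq 2$, then $(A,+)$ has a proper nontrivial normal subgroup (for instance, a subgroup of order $p$ contained in its nontrivial center), which would be a proper nontrivial ideal of $A$, contradicting simplicity. Therefore $n=1$, and $A$ is the trivial brace on the group of order $p$, as claimed.

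I expect the argument to be essentially routine once Proposition~\ref{pro:pgroups} is available. The only step requiring any care is ruling out $A^2 = A$, and this is precisely the point where left nilpotency is essential; the remaining input is the elementary fact that a $p$-group of order $p^n$ with $n\geq 2$ is never simple as a group. I do not anticipate a serious obstacle.
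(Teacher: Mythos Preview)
Your proposal is correct and follows essentially the same approach as the paper: use left nilpotency of $p$-power braces to force $A^2\neq A$, conclude $A^2=0$ by simplicity so that $A$ is trivial, and then finish with elementary $p$-group theory. The only cosmetic differences are that the paper cites Theorem~\ref{thm:left_nilpotent=nilpotent} rather than the more direct Proposition~\ref{pro:pgroups}, and in the endgame the paper uses the commutator subgroup $[A,A]_+$ as the candidate proper ideal whereas you use a central subgroup of order $p$.
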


\begin{proof}
 By Theorem~\ref{thm:left_nilpotent=nilpotent}, $A$ is left nilpotent. In
 particular, if $A\neq 0$, then $A^2\neq A$. Since $A^2$ is an ideal either $A$
 is not simple or $A^2=0$. Assume that $A^2=0$. In this case, $a\circ b=a+b$
 for all $a,b\in A$. Therefore $[A,A]$ is a proper an ideal of $A$. Hence,
 either $A$ is not simple or $[A,A]=0$. Assume that $A^2=[A,A]=0$. In this case
 $A$ is a trivial brace and the result follows.
\end{proof}

\begin{lem}
    \label{lem:sylow_leftideals}
    Let $A$ be a finite skew left brace with nilpotent additive group. Let $p$
    and $q$ distinct prime numbers and let $P$ and $Q$ be Sylow subgroups of
    $(A,+)$ of sizes $p^n$ and $q^m$, respectively. Then $P$, $Q$ and $P+Q$ are
    left ideals of $A$.
\end{lem}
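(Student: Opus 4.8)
The plan is to exploit the fact that in a finite nilpotent group the Sylow subgroups are characteristic, together with the fact that the maps $\lambda_a$ act by additive automorphisms. Recall from the preliminaries that $\lambda\colon(A,\circ)\to\Aut(A,+)$ is a group homomorphism, so each $\lambda_a$ is an automorphism of $(A,+)$; and that a subgroup $I$ of $(A,+)$ is a left ideal precisely when $\lambda_a(I)\subseteq I$ for all $a\in A$.

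First I would observe that, since $(A,+)$ is finite and nilpotent, it is the internal direct product of its Sylow subgroups. In particular $P$ is the unique Sylow $p$-subgroup of $(A,+)$, and it coincides with the set of all elements of $(A,+)$ whose order is a power of $p$. This description makes it manifest that $P$ is a characteristic subgroup of $(A,+)$, i.e. $\varphi(P)=P$ for every $\varphi\in\Aut(A,+)$; the same applies to $Q$. Applying this to $\varphi=\lambda_a$ gives $\lambda_a(P)=P$ and $\lambda_a(Q)=Q$ for all $a\in A$. Since $P$ and $Q$ are subgroups of $(A,+)$ invariant under every $\lambda_a$, they are left ideals of $A$ directly from the definition.

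It remains to treat $P+Q$. Because $P$ and $Q$ are normal subgroups of $(A,+)$ of coprime orders, one has $P\cap Q=\{0\}$ and the two subgroups commute elementwise, so $P+Q$ is again a subgroup of $(A,+)$ (the internal direct product of $P$ and $Q$). Since $\lambda_a$ is additive, $\lambda_a(P+Q)=\lambda_a(P)+\lambda_a(Q)=P+Q$, so $P+Q$ is $\lambda$-invariant and hence a left ideal of $A$.

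The argument presents no real obstacle: once one recalls that the $\lambda_a$ are automorphisms of $(A,+)$, everything reduces to the elementary fact that the Sylow subgroups of a finite nilpotent group are characteristic. The only point deserving a moment of care is verifying that $P+Q$ is a subgroup, which is exactly where the nilpotency of $(A,+)$ is used, forcing $P$ and $Q$ to be normal with trivial intersection.
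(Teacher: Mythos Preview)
Your proof is correct and follows essentially the same approach as the paper: both arguments rest on the fact that each $\lambda_a$ is an automorphism of $(A,+)$ and therefore preserves the (characteristic) Sylow subgroups of the nilpotent group $(A,+)$, with $P+Q$ then handled by additivity of $\lambda_a$. Your write-up is a bit more explicit about why $P+Q$ is a subgroup, but the underlying idea is identical.
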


\begin{proof}
    Let us first prove that $P$ is a left ideal. Since $(A,+)$ is nilpotent,
    $P$ is a normal subgroup of $(A,+)$. Let $a\in A$ and $x\in P$. Then
    $\lambda_a(x)\in P$ since $\lambda_a$ is a group homomorphism. Similarly
    one proves that $Q$ is a left ideal. From this it follows that $P+Q$ is a
    left ideal.
\end{proof}

The following is based on~\cite[Theorem 5(1)]{MR3765444}.
However, the proof is completely different.

\begin{thm}
\label{thm:P*Q=0} Let $A$ be a finite skew left brace with nilpotent
additive group. Let $p$ and $q$ distinct prime numbers and let $A_p$
and $A_q$ be Sylow subgroups of $(A,+)$ of sizes $p^n$ and $q^m$,
respectively. If $p$ does not divide $q^t-1$ for all
$t\in\{1,\dots,m\}$, then $A_p*A_q=0$. In particular,
$\lambda_x(y)=y$ for all $x\in A_p$ and $y\in A_q$.
\end{thm}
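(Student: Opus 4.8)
The plan is to realize the action of $A_p$ on $A_q$ as a coprime action and to show it is trivial by a counting argument on automorphism groups. By Lemma~\ref{lem:sylow_leftideals} both $A_p$ and $A_q$ are left ideals of $A$; in particular $\lambda_x(A_q)\subseteq A_q$ for every $x\in A$, and, since a left ideal is a subgroup of the multiplicative group, $(A_p,\circ)$ is a group of order $p^n$, i.e.\ a $p$-group. Because $\lambda\colon(A,\circ)\to\Aut(A,+)$ is a homomorphism and each $\lambda_x$ restricts to an automorphism of $(A_q,+)$, the assignment $x\mapsto\lambda_x|_{A_q}$ defines a group homomorphism $\rho\colon(A_p,\circ)\to\Aut(A_q,+)$. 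Its image $\bar P=\rho((A_p,\circ))$ is a $p$-group, and the whole statement reduces to proving $\bar P=1$: indeed $\bar P=1$ means $\lambda_x(y)=y$, hence $x*y=\lambda_x(y)-y=0$, for all $x\in A_p$ and $y\in A_q$, which is exactly $A_p*A_q=0$.

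To show $\bar P=1$ I would prove that $p\nmid|\Aut(Q)|$, where $Q=(A_q,+)$ is a $q$-group of order $q^m$; then $\bar P\le\Aut(Q)$ forces $|\bar P|=1$ by Lagrange. Write $t=\dim_{\F_q}Q/\Phi(Q)$, so $1\le t\le m$ when $Q\ne0$ (and the case $Q=0$ is trivial). The restriction map $\Aut(Q)\to\Aut(Q/\Phi(Q))\cong\GL_t(\F_q)$ has kernel equal to the group of automorphisms inducing the identity on the Frattini quotient, which is a $q$-group by a classical fact (a consequence of the Burnside basis theorem). Hence $|\Aut(Q)|$ divides $q^{a}\,|\GL_t(\F_q)|$ for some $a\ge0$, and since $|\GL_t(\F_q)|=q^{\binom{t}{2}}\prod_{i=1}^t(q^i-1)$, the hypotheses $p\ne q$ and $p\nmid q^i-1$ for $1\le i\le t\le m$ yield $p\nmid|\Aut(Q)|$.

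The main obstacle is the group-theoretic input in the previous paragraph: the fact that the stabilizer of the Frattini quotient inside $\Aut(Q)$ is a $q$-group, which is what channels all the $p$-torsion of $\Aut(Q)$ into $\GL_t(\F_q)$ and makes the arithmetic hypothesis on the numbers $q^i-1$ exactly the right one (note that the bound $t\le m$ is what lets us invoke the hypothesis only up to exponent $m$). Once $p\nmid|\Aut(Q)|$ is established, the conclusion $\bar P=1$, and therefore $A_p*A_q=0$, is immediate, and the final ``in particular'' is just the definition $x*y=\lambda_x(y)-y$.
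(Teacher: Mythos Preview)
Your proof is correct and takes a genuinely different route from the paper. The paper works inside the sub skew left brace $B=A_p+A_q$: since $A_p$ is a Sylow $p$-subgroup of $(B,\circ)$, Sylow's theorem gives $n_p=[B:N_{(B,\circ)}(A_p)]\equiv 1\pmod p$; but this index is a power of $q$, so the hypothesis $p\nmid q^t-1$ forces $n_p=1$, i.e.\ $A_p\trianglelefteq(B,\circ)$, whence $A_p$ is an ideal of $B$ and $A_p*A_q\subseteq A_p\cap A_q=0$. You instead analyse the action $\rho\colon(A_p,\circ)\to\Aut(A_q,+)$ directly and kill its image via the structure of $\Aut(Q)$ for a finite $q$-group $Q$, invoking the Burnside--Hall fact that the kernel of $\Aut(Q)\to\GL_t(\F_q)$ is a $q$-group. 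The paper's argument is more elementary---it needs only Sylow's theorem---and stays within the brace-theoretic language of ideals; yours is a bit shorter once the Frattini-kernel lemma is granted, and it makes transparent that the arithmetic hypothesis on the $q^i-1$ is precisely what forbids any nontrivial $p$-subgroup of $\Aut(Q)$.
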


\begin{proof}
By Lemma~\ref{lem:sylow_leftideals} $A_p$, $A_q$ and $A_p+A_q$
are left ideals of $A$. In particular,  $A_p+A_q$ is a skew subbrace
of $A$ and $A_p$ and $A_q$ are Sylow subgroups of
$(A_p+A_q,\circ)$. By Sylow's theorem, the number $n_p$ of Sylow
$p$-subgroups of the multiplicative group of $A_p+A_q$ is
\[
n_p=[A_p+A_q:N]\equiv 1\bmod p,
\]
where $N=\{g\in A_p+A_q:g\circ A_p\circ g'=A_p\}$ is the normalizer
of $A_p$ in the multiplicative group of $A_p+A_q$. Since
$[A_p+A_q:N]=q^s$ for some $s\in\{0,\dots,m\}$ and $p$ does not
divide $q^t-1$ for all $t\in\{1,\dots,n\}$, it follows that $s=0$
and hence $A_p$ is a normal subgroup of the multiplicative group of
$A_p+A_q$. Thus $A_p$ is an ideal of the skew left brace
$A_p+A_q$. Since $A_p$ is an ideal of $A_p+A_q$ and $A_q$ is a left
ideal, we have that $A_p*A_q\subseteq A_p\cap A_q=0$, and the result
follows.
\end{proof}

\begin{cor}
Let $A$ be a skew left brace of size $p_1^{\alpha_1}\cdots
p_k^{\alpha_k}$, where $p_1<p_2<\cdots<p_k$ are prime numbers and
$\alpha_1,\dots,\alpha_k$ are positive integers. Assume that the
additive group of $A$ is nilpotent. Let $A_j$ be the Sylow $p_j$-
subgroups of the additive group of $A$. Assume that, for some $j\leq k$,
$p_j$ does not divide $p_i^{t_i}-1$ for all $t_i\in\{
1,\dots \alpha_i\}$ for all $i\neq j$. Then $\Soc(A_j)\subseteq
\Soc(A)$.
\end{cor}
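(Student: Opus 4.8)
The plan is to reduce the statement to two inputs: the hypothesis on $p_j$ lets us invoke Theorem~\ref{thm:P*Q=0} to trivialize the $\lambda$-action of $A_j$ on every other Sylow component, while the nilpotency of $(A,+)$ provides the internal direct product decomposition of the additive group. Recall that $\Soc(A)=\ker(\lambda)\cap Z(A,+)$, so to prove $\Soc(A_j)\subseteq\Soc(A)$ it suffices to show that every $x\in\Soc(A_j)$ satisfies both $\lambda_x=\id$ on all of $A$ and $x\in Z(A,+)$.

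First I would record the structural input. By Lemma~\ref{lem:sylow_leftideals} each $A_i$ is a left ideal, hence a sub skew left brace closed under the ambient $\lambda$; in particular $\Soc(A_j)$ is meaningful and, since $\lambda$ restricted to $A_j$ is the brace-$A_j$ map, an element $x\in\Soc(A_j)$ satisfies $\lambda_x(b)=b$ and $x+b=b+x$ for all $b\in A_j$ (using $x\circ b=x+\lambda_x(b)$ for the first condition). Because $(A,+)$ is a finite nilpotent group of order $p_1^{\alpha_1}\cdots p_k^{\alpha_k}$, it is the internal direct product $A_1\times\cdots\times A_k$: every $a\in A$ is uniquely $a=a_1+\cdots+a_k$ with $a_i\in A_i$, and elements of distinct factors $A_i$ commute additively.

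Next I would invoke the hypothesis. For each $i\neq j$, the assumption that $p_j$ does not divide $p_i^{t}-1$ for $t\in\{1,\dots,\alpha_i\}$ is exactly the condition of Theorem~\ref{thm:P*Q=0} with $p=p_j$ and $q=p_i$ (so $m=\alpha_i$); hence $A_j*A_i=0$ and, equivalently, $\lambda_x(y)=y$ for all $x\in A_j$ and $y\in A_i$. Now fix $x\in\Soc(A_j)$ and $a=a_1+\cdots+a_k\in A$. Since $\lambda_x$ is an additive automorphism, $\lambda_x(a)=\lambda_x(a_1)+\cdots+\lambda_x(a_k)$; the $j$-th summand is $a_j$ because $x\in\Soc(A_j)$, and each other summand is $a_i$ by Theorem~\ref{thm:P*Q=0}. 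Thus $\lambda_x(a)=a$, so $x\in\ker\lambda$. For additive centrality, view $x$ as sitting in the $j$-th factor of $A_1\times\cdots\times A_k$: it commutes with every $a_i$ for $i\neq j$ because distinct direct factors commute, and with $a_j$ because $x\in Z(A_j,+)$; hence $x+a=a+x$. Therefore $x\in\ker\lambda\cap Z(A,+)=\Soc(A)$.

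I do not anticipate a genuine obstacle here: once Theorem~\ref{thm:P*Q=0} trivializes the cross-action and the direct-product decomposition is available, the argument is bookkeeping. The only point requiring care is that the two defining conditions of $\Soc(A_j)$ must each be extended from the $A_j$-component to all of $A$ by \emph{different} inputs — Theorem~\ref{thm:P*Q=0} handles the vanishing of $\lambda_x$ across components, whereas the direct-product structure of $(A,+)$ handles additive centrality — and one should keep these two verifications separate rather than conflating them.
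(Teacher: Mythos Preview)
Your proof is correct and follows essentially the same route as the paper: use the nilpotency of $(A,+)$ to decompose $A$ as the internal direct sum of its Sylow subgroups, apply Theorem~\ref{thm:P*Q=0} to get $\lambda_x(a_i)=a_i$ for $i\neq j$, and combine with $x\in\Soc(A_j)$ on the $j$-th component. If anything, you are slightly more explicit than the paper, which verifies only the $\ker\lambda$ condition and leaves the additive centrality $x\in Z(A,+)$ implicit in the direct-product structure; your separation of the two verifications is a worthwhile point of care.
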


\begin{proof}
Write $A=A_1+\cdots+A_k$. Let $a\in \Soc(A_j)$ and $b\in A$. Hence
there exist elements $b_k\in A_k$ such that $b=b_1+\dots +b_k$. By
Theorem~\ref{thm:P*Q=0}, $\lambda_a(b_i)=b_i$, for all $i\neq j$.
Then $\lambda_a(b)=\lambda_a(b_1)+\dots +\lambda_a(b_k)=b_1+\dots
+b_k=b$ and hence $a\in Soc(A)$. Thus the result follows.
\end{proof}

\section{Braces with cyclic multiplicative group}
\label{cyclic}

As a consequence of a result of P. Hall one proves that the multiplicative
group of a finite left brace is solvable. The following example shows that
this fact does not hold for infinite left braces:

\begin{example}
    Let $K$ be a field of characteristic $0$. The Jacobson radical of
    $M_2(K[[x]])$ is $J=M_2(xK[[x]])$. Thus $(J,+,\circ)$ is a two-sided brace,
    where
    \[
        A\circ B=AB+A+B
    \]
    for all $A,B\in J$.
    The map $f\colon J\longrightarrow \GL(M_2(K[[x]]))$ defined by
    $f(A)=I_2+A$, for all $A\in J$, where $I_2$ is the identity $2\times 2$
    matrix, is a monomorphism of groups: $$f(A\circ
    B)=I_2+AB+A+B=(I_2+A)(I_2+B)=f(A)f(B).$$ By \cite[Lemma~2.8]{W}, the
    subgroup of $(J,\circ)$ generated by
    $$\left(\begin{array}{cc}
        0&0\\ x&0
    \end{array}\right), \quad
    \left(\begin{array}{cc} 0&x\\ 0&0
    \end{array}\right)$$
    is free of rank two. Therefore $(J,\circ)$ is not solvable.
\end{example}

The following result is due to Rump (see the proof of
\cite[Proposition~6]{MR2298848}).

\begin{thm}[Rump]\label{addZ}
    Let $A$ be a left brace of type
    $\Z$. Then either $A$ is the trivial brace isomorphic to $\Z$ or
    its multiplication is defined by
\begin{eqnarray}\label{mult}&&(ma)\circ (na)=((-1)^mn+m)a,
\end{eqnarray} for all $m,n\in \Z$, where $a\in A$ is a fixed generator of
    its additive group.
\end{thm}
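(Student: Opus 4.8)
The plan is to exploit the fact that the additive group is $\Z$, whose automorphism group is as small as possible. Fix a generator $a$ of $(A,+)$, so that every element of $A$ is $ma$ for a unique $m\in\Z$. Since $\Aut(\Z)=\{\pm\id\}$ and $\lambda\colon(A,\circ)\to\Aut(A,+)$ is a group homomorphism, for each $m$ there is a sign $\epsilon(m)\in\{\pm1\}$ with $\lambda_{ma}=\epsilon(m)\,\id$, and $ma\mapsto\epsilon(m)$ is a homomorphism from $(A,\circ)$ to $\{\pm1\}$. Using $a\circ b=a+\lambda_a(b)$ one reads off
\[
(ma)\circ(na)=ma+\lambda_{ma}(na)=\bigl(m+\epsilon(m)n\bigr)a,
\]
so the whole multiplication is encoded by the single function $\epsilon$, and the problem reduces to determining $\epsilon$.

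Next I would record that the homomorphism property of $\epsilon$ with respect to $\circ$ is exactly the functional equation
\[
\epsilon\bigl(m+\epsilon(m)n\bigr)=\epsilon(m)\epsilon(n)\qquad(m,n\in\Z),
\]
together with $\epsilon(0)=1$ (the additive identity $0$ is also the multiplicative identity, so $\lambda_0=\id$). The value $\epsilon(1)=\lambda_a\in\{\pm1\}$ governs everything. Setting $m=1$ gives $\epsilon\bigl(1+\epsilon(1)n\bigr)=\epsilon(1)\epsilon(n)$. If $\epsilon(1)=1$ this reads $\epsilon(n+1)=\epsilon(n)$, so $\epsilon$ is constant equal to $\epsilon(0)=1$; then $(ma)\circ(na)=(m+n)a$ and $A$ is the trivial brace isomorphic to $\Z$.

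If instead $\epsilon(1)=-1$, I would pin down $\epsilon(m)=(-1)^m$ by induction. Putting $n=1$ in the functional equation yields the two local rules $\epsilon(m)=1\Rightarrow\epsilon(m+1)=-1$ and $\epsilon(m)=-1\Rightarrow\epsilon(m-1)=1$; evaluating at $m=-1$ forces $\epsilon(-1)=-1$ (otherwise one gets $\epsilon(0)=-1$), which propagates the pattern to negative indices. Starting from $\epsilon(0)=1$, $\epsilon(1)=-1$ and running these rules in both directions gives $\epsilon(m)=(-1)^m$ for all $m\in\Z$, hence
\[
(ma)\circ(na)=\bigl(m+(-1)^mn\bigr)a=\bigl((-1)^mn+m\bigr)a,
\]
which is \eqref{mult}. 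Equivalently, in this case $\ker\lambda=\Soc(A)$ is the additive subgroup $2\Z a$ of index two, which fixes the parity pattern at once.

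The conceptual content lies entirely in the first step: recognizing that $\lambda$ takes values in $\{\pm\id\}$ reduces the brace to the scalar function $\epsilon$. The only place demanding care is the sign bookkeeping in the second case, where one must run the recursion consistently toward both $+\infty$ and $-\infty$ and check at each turn that no contradiction with $\epsilon(0)=1$ arises; this is precisely what forces $\epsilon(1)=-1$ to be the \emph{only} non-trivial possibility. I expect no genuine obstacle beyond this elementary verification, and one need not separately check that \eqref{mult} defines a group, since it is simply read off from the given skew brace structure.
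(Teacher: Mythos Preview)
The paper does not actually give a proof of this statement: it attributes the result to Rump and refers the reader to the proof of \cite[Proposition~6]{MR2298848}, then moves on. So there is nothing in the paper to compare against.

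Your argument is correct and is the natural one. The key reduction---that $\lambda$ lands in $\Aut(\Z)=\{\pm\id\}$, so the brace is governed by a sign function $\epsilon\colon\Z\to\{\pm1\}$ satisfying $\epsilon(m+\epsilon(m)n)=\epsilon(m)\epsilon(n)$---is exactly right. One small sharpening of the case $\epsilon(1)=-1$: once you have established $\epsilon(-1)=-1$ (by the contradiction you indicate), the functional equation at $m=-1$ gives $\epsilon(-1-n)=-\epsilon(n)$, and combining this with $\epsilon(1-n)=-\epsilon(n)$ (from $m=1$) yields $\epsilon(k+2)=\epsilon(k)$ for all $k$; the $2$-periodicity then settles $\epsilon(m)=(-1)^m$ at once, avoiding the inductive back-and-forth. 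Your closing remark is also correct: since the formula is read off from a given brace structure, no separate verification that \eqref{mult} defines a group is needed for the dichotomy as stated.
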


%

Now we shall study the left braces with multiplicative group
isomorphic to $\Z$. Note that these are two-sided braces. Thus this
is equivalent to the study of the Jacobson radical rings such that
its circle group is isomorphic to $\Z$.

\begin{lem}\label{fg}
Let $R$ be a Jacobson radical ring with its circle group isomorphic
to $\Z$. Then the additive group of $R$ is finitely generated.
\end{lem}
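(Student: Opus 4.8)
The plan is to pass to the unitalization and identify $R$ with the Jacobson radical of a finitely generated commutative $\Z$-algebra, where nilpotency comes for free. First note that since the circle group $(R,\circ)\cong\Z$ is abelian, $R$ is a commutative ring: $a\circ b=b\circ a$ reads $a+b+ab=b+a+ba$, so $ab=ba$. Form the unitalization $R^1=\Z\oplus R$ (with $1=(1,0)$ and $R$ the ideal $\{(0,r)\}$, so $R^1/R\cong\Z$). The standard fact that a ring is a Jacobson radical ring exactly when it is the Jacobson radical of its unitalization gives $R=J(R^1)$: every element of $R$ is quasiregular, so $R\subseteq J(R^1)$, and $J(R^1)$ maps into $J(R^1/R)=J(\Z)=0$, so $J(R^1)\subseteq R$. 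Now fix a generator $a$ of $(R,\circ)\cong\Z$ and set $u=1+a\in R^1$; since $a$ is quasiregular, $u$ is a unit. An easy induction shows that the $n$-th power $a^{\circ n}$ of $a$ in $(R,\circ)$ equals $u^n-1$ (indeed $a^{\circ(n+1)}=a^{\circ n}\circ a=(u^n-1)+(u-1)+(u^n-1)(u-1)=u^{n+1}-1$). Because $a$ generates the circle group, $R=\{\,u^n-1:n\in\Z\,\}$, and therefore $R^1=\Z\cdot 1+R=\Z[u,u^{-1}]$.

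The crux is to deduce that $a$ is nilpotent. Here I would invoke that $R^1=\Z[u,u^{-1}]$ is a finitely generated commutative $\Z$-algebra, hence a Jacobson ring (finitely generated algebras over the Jacobson ring $\Z$ are Jacobson, by the general Nullstellensatz). In a commutative Jacobson ring the Jacobson radical coincides with the nilradical, because every prime ideal is an intersection of maximal ideals, so $\bigcap_{\mathfrak p}\mathfrak p=\bigcap_{\mathfrak m}\mathfrak m$. Since $a=u-1\in R=J(R^1)=\operatorname{nil}(R^1)$, there is an $N$ with $a^N=0$. This nilpotency step is the main obstacle: once it is in hand the rest is bookkeeping, and everything hinges on recognizing $R^1$ as a finitely generated $\Z$-algebra so that the Jacobson property applies.

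Finally I would read off finite generation of $(R,+)$. As $a^N=0$, for every $n\in\Z$ the generalized binomial expansion is a finite sum,
\[
a^{\circ n}=(1+a)^n-1=\sum_{k=1}^{N-1}\binom{n}{k}a^k,
\]
with integer coefficients $\binom{n}{k}$, valid for all $n\in\Z$ since $a$ is nilpotent in the commutative ring $R^1$. Hence $a^{\circ n}\in\langle a,a^2,\dots,a^{N-1}\rangle_+$ for all $n$, and because $R=\{a^{\circ n}:n\in\Z\}$ we get $R\subseteq\langle a,a^2,\dots,a^{N-1}\rangle_+$. The reverse inclusion is immediate, as $R$ is closed under multiplication and $a\in R$, so $a^k\in R$ for every $k$. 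Therefore $(R,+)=\langle a,a^2,\dots,a^{N-1}\rangle_+$ is finitely generated, which proves the lemma. I expect the only point needing care in the write-up is the passage $R^1=\Z[u,u^{-1}]$ together with the clean statement of $R=J(R^1)$; the nilpotency and the final expansion are then routine.
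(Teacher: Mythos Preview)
Your argument is correct, but it proceeds along quite different lines from the paper's. The paper gives a completely elementary case analysis: taking a circle-generator $x$ and its quasi-inverse $y$, it observes that every element of $R$ is $\sum_{i=1}^{n}\binom{n}{i}x^{i}$ or $\sum_{i=1}^{n}\binom{n}{i}y^{i}$, and then locates the additive element $x+y$ inside this list. Three cases ($x+y=0$, $x+y=x^{\circ n}$, $x+y=y^{\circ n}$) together with the relation $xy+x+y=0$ exhibit $R$ as $x\Z$, $x\Z+\dots+x^{n}\Z$, or $y\Z+\dots+y^{n}\Z$.

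By contrast you pass to the unitalization, identify $R^{1}=\Z[u,u^{-1}]$ as a finitely generated commutative $\Z$-algebra, and invoke the general Nullstellensatz to conclude that $J(R^{1})$ is the nilradical; nilpotency of $a=u-1$ then forces every $a^{\circ n}$ into $\langle a,\dots,a^{N-1}\rangle_{+}$. This is conceptually clean and in fact yields a little more than the lemma asks (nilpotency of the generator), at the cost of invoking nontrivial commutative-algebra machinery. The paper's route requires nothing beyond binomial identities and the quasi-inverse relation, and even produces explicit generating sets in each case. Both proofs ultimately hinge on the same phenomenon---that the generator satisfies a polynomial relation with integer coefficients---but you import it from the Jacobson property of $\Z[u,u^{-1}]$, while the paper extracts it by hand from the single equation $x\circ y=0$.
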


\begin{proof}
Let $x\in R$ be a generator of the circle group $(R,\circ)$ of $R$.
Let $y\in R$ the inverse of $x$ in $(R,\circ)$. Then
$$R=\left\{\sum_{i=1}^n {n\choose i} x^{i}\mid n\geq 1\right\}\cup \left\{\sum_{i=1}^n {n\choose i} y^{i}\mid n\geq 1\right\}\cup \{ 0\}.$$
Since $x+y\in R$ one of the following conditions holds:
\begin{itemize}
\item[(1)] $x+y=0$. In this case, $0=xy=-x^2$, thus
$R=\{ zx\mid z\in \Z\}$ and the additive group of $R$ is isomorphic
to $\Z$.
\item[(2)] There  exists a positive integer $n$ such that
$$x+y=\sum_{i=1}^n {n\choose i} x^{i}.$$
Since $y\neq 0$, we have that $n>1$. Then
$$0=xy+x+y=x\left(-x+\sum_{i=1}^n {n\choose i} x^{i}\right)+\sum_{i=1}^n {n\choose i} x^{i}.$$
Hence $R=x\Z+x^2\Z+\dots +x^n\Z$.
\item[(3)]  There  exists a positive integer $n$ such that
$$x+y=\sum_{i=1}^n {n\choose i} y^{i}.$$
Since $x\neq 0$, we have that $n>1$. Then
$$0=yx+x+y=y\left(-y+\sum_{i=1}^n {n\choose i} y^{i}\right)+\sum_{i=1}^n {n\choose i} y^{i}.$$
Hence $R=y\Z+y^2\Z+\dots +y^n\Z$.
\end{itemize}
Therefore the result follows.
\end{proof}

\begin{thm}\label{multZ}
Let $R$ be a Jacobson radical ring with its circle group isomorphic
to $\Z$. Then $ab=0$ for all $a,b\in R$.
\end{thm}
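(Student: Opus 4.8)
The statement asks to show that a Jacobson radical ring $R$ whose circle group $(R,\circ)$ is infinite cyclic is square-zero, i.e. $ab=0$ for all $a,b$. Since $a\circ b=ab+a+b$, the equality $ab=0$ for all $a,b$ is equivalent to $R^2=0$, where $R^2$ denotes the additive group generated by all products; so the goal is to prove $R^2=0$. I would work inside the unitalization $R^1=\Z\oplus R$. Fix a generator $x$ of $(R,\circ)\cong\Z$ and put $u=1+x$, a unit of infinite multiplicative order. Every element of $R$ is then $x^{\circ k}=u^k-1$, so as a \emph{set} $R=\{u^k-1:k\in\Z\}$, and $R$ is generated as a ring by $x$. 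By Lemma~\ref{fg} the additive group $(R,+)$ is finitely generated; I would first record that it is torsion-free: the torsion subgroup $T$ is an ideal (if $nt=0$ then $n(rt)=0$), hence a finite radical subring, hence a finite circle-subgroup of $(R,\circ)\cong\Z$, so $T=0$. Thus $(R,+)\cong\Z^{r}$ and $R^1$ is an order with $R^1\otimes\Q$ a finite-dimensional commutative $\Q$-algebra.

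The crucial step is to prove that $x$ is nilpotent, equivalently that $u$ is unipotent. Here I would exploit the set equality $R=\{u^k-1\}$ archimedeanly. Let $\psi\colon R^1\to\C$ be any $\Q$-algebra homomorphism and write $\omega=\psi(u)$, an algebraic integer. The image $\psi(R)=\{\omega^k-1:k\in\Z\}$ is a \emph{finitely generated additive subgroup} of $\C$. I claim $\lvert\omega\rvert=1$: if, say, $\lvert\omega\rvert>1$ (the case $\lvert\omega\rvert<1$ follows by replacing $u$ with $u^{-1}$), then for large $k$ the differences $(\,u^{k+1}-1)-(u^{k}-1)$ give $\omega^{k}(\omega-1)=\omega^{m_k}-1$ for integers $m_k$; comparing magnitudes forces $m_k=k+O(1)$, and dividing by $\omega^{k}$ yields $(\omega-1)+\omega^{-k}=\omega^{m_k-k}$. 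The right-hand side ranges over the discrete set $\{\omega^{t}\}$ while the left-hand side tends to $\omega-1$, which forces $\omega^{-k}=0$, a contradiction. Hence every conjugate of the image of $u$ in each field factor of the semisimple quotient lies on the unit circle, so by Kronecker's theorem that image is a root of unity; but then $\{\omega^k-1\}$ is finite, and a finite additive subgroup of $\C$ is $\{0\}$, forcing $\omega=1$. Since this holds for every $\psi$, the image of $u$ in the semisimple quotient of $R^1\otimes\Q$ is $1$, i.e. $x=u-1$ is nilpotent. As $R$ is commutative and ring-generated by $x$, this gives $R^{c}=0$ for some $c$, so $R$ is nilpotent.

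To finish I would use the homomorphism $\theta\colon (R,\circ)\to (R/R^2,+)$, $a\mapsto a+R^2$, which is a well-defined surjective group homomorphism because $a\circ b\equiv a+b\pmod{R^2}$ and $R$ is ring-generated by $x$; hence $(R/R^2,+)$ is cyclic. If $R\neq 0$, nilpotency forces $R_\Q^{2}\subsetneq R_\Q$, so $R^2$ has rank strictly less than $r$ and $R/R^2$ is infinite, hence $R/R^2\cong\Z$. Then $\theta$ is a surjection $\Z\to\Z$, necessarily an isomorphism, so $R^2=\ker\theta=0$. Equivalently, one reaches $(R,+)\cong\Z$ and may conclude via Rump's Theorem~\ref{addZ}: the only nontrivial left brace of type $\Z$ has $a\circ a=0$, so its multiplicative group is the infinite dihedral group $D_{\infty}\not\cong\Z$, which is excluded; thus the brace is trivial and $ab=0$.

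The main obstacle is the middle step, namely proving $x$ nilpotent. The purely module-theoretic data (finite generation, torsion-freeness, the dichotomy from $\theta$) do not by themselves rule out a strictly decreasing chain $R\supsetneq R^2\supsetneq\cdots$ of full-rank circle-subgroups, so nilpotency cannot be obtained from Nakayama or the Krull intersection theorem alone. What breaks the symmetry is the rigid set-theoretic constraint $R=\{u^k-1\}$, which I expect to be indispensable; the archimedean comparison together with Kronecker's theorem is the device that converts this constraint into the unipotence of $u$. The remaining bookkeeping (torsion-freeness, the properties of $\theta$, the final rank count) is routine once nilpotency is in hand.
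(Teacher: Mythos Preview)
Your argument is essentially correct and reaches the goal, but it takes a very different path from the paper's proof. One small imprecision: when you write that $\omega^{m_k-k}$ ``ranges over the discrete set $\{\omega^t\}$'', note that $\{\omega^t:t\in\Z\}$ is not discrete when $|\omega|>1$ (it accumulates at $0$). What actually saves the step is that $m_k-k$ is a \emph{bounded integer sequence}, hence eventually constant, say equal to $t$; then $\omega^k(\omega-1)=\omega^{k+t}-1$ for all large $k$, which rewrites as $\omega^k\bigl[(\omega-1)-\omega^t\bigr]=-1$, an immediate contradiction whether or not the bracket vanishes. With this tweak the archimedean/Kronecker step is rigorous. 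Your closing ``Equivalently'' via Theorem~\ref{addZ} is slightly circular as stated, since you only obtain $(R,+)\cong\Z$ \emph{after} concluding $R^2=0$; the $\theta$ argument already finishes cleanly, so you can simply drop that alternative.

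By contrast, the paper's proof is much shorter and entirely elementary. Using Lemma~\ref{fg}, for each prime $p$ the ring $R/pR$ is a nonzero finite brace of $p$-power order; since the only simple brace of $p$-power order is the trivial one of order $p$, there is an ideal $I_p\supseteq pR$ with $|R/I_p|=p$ and $(R/I_p)^2=0$. Thus $ab\in I_p$ for all $a,b$ and all primes $p$. On the circle side, $(I_p,\circ)$ is the index-$p$ subgroup $p\Z$ of $(R,\circ)\cong\Z$, so $\bigcap_p I_p=0$ and $ab=0$. No unitalization, no number theory, no archimedean estimates. Your route, while heavier, has the virtue of exhibiting the intermediate structural fact that the generator $x$ is nilpotent and that $R$ embeds in $x\Z[x]/(x^n)$; this perspective could be useful in related problems where one controls the circle group but not the residues modulo primes.
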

\begin{proof}
Let $p$ be a prime number. Since the additive group of $R$ is
infinite and finitely generated,  $pR$ is a proper ideal of $R$ and
$R/pR$ has order $p^m$ for some positive integer $m$. Since the only
simple left brace of order a power of $p$ is the trivial brace of
order $p$, there exists a maximal ideal $I_p$ of $R$ such that
$pR\subseteq I_p$ and $R/I_p$ has order $p$. Let $x$ be a generator
of the circle group of $R$. Then it is clear that the circle group
of $I_p$ is generated by $x^{\circ p}$, (where $x^{\circ
n}=x\circ\dots\circ x$ ($n$ times)). Let $a,b\in R$. We have that
$(a+I_p)(b+I_p)=I_p$ because $R/I_p$ is a ring with zero
multiplication. Hence $ab\in I_p$, for all prime numbers $p$. Now
$$\bigcap_{p \text{ prime}}I_p=\bigcap_{p \text{ prime}}\{x^{\circ zp}\mid z\in \Z\}=\{0\}.$$
Therefore $ab=0$, and the result follows.
\end{proof}

As a consequence we have the following result.

\begin{thm}
    \label{thm:Z}
    Let $A$ be a left brace with multiplicative group isomorphic to $\Z$. Then
    $A$ is a trivial brace, in particular the additive group of $A$ is
    isomorphic to $\Z$.
\end{thm}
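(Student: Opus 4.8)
The plan is to reduce the statement to Theorem~\ref{multZ} by recognizing $A$ as the circle structure of a Jacobson radical ring. The key observation is that, because $A$ is a left brace the additive group $(A,+)$ is abelian, and by hypothesis $(A,\circ)\cong\Z$ is also abelian; these two facts together force the operation $*$ to be commutative, which in turn makes $A$ two-sided. Once $A$ is identified with a radical ring whose circle group is $\Z$, Theorem~\ref{multZ} finishes everything off.

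First I would check that $*$ is commutative. Writing $a\circ b=a+a*b+b$ and $b\circ a=b+b*a+a$ and using that both $(A,+)$ and $(A,\circ)$ are abelian, the relation $a\circ b=b\circ a$ collapses, after cancelling $a$ and $b$, to $a*b=b*a$ for all $a,b\in A$. Next, the first brace identity $a*(b+c)=a*b+b+a*c-b$ reduces, since $(A,+)$ is abelian, to the left-distributive law $a*(b+c)=a*b+a*c$; combining this with commutativity gives right-distributivity, $(a+b)*c=c*(a+b)=c*a+c*b=a*c+b*c$. Finally, expanding $(a\circ b)*c=(a+a*b+b)*c$ by right-distributivity and comparing with the second brace identity $(a\circ b)*c=a*(b*c)+b*c+a*c$, the summands $a*c$ and $b*c$ cancel and leave $(a*b)*c=a*(b*c)$. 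Thus $*$ is associative, so $R=(A,+,*)$ is a (non-unital, associative) ring, and since $(R,\circ)=(A,\circ)$ is a group, $R$ is a Jacobson radical ring whose circle group is isomorphic to $\Z$.

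At this point Theorem~\ref{multZ} applies verbatim to $R$: it yields $a*b=0$ for all $a,b\in A$. Consequently $a\circ b=a+a*b+b=a+b$ for all $a,b\in A$, which is exactly the statement that $A$ is a trivial brace. Being trivial, the additive and multiplicative operations of $A$ coincide, and therefore $(A,+)\cong(A,\circ)\cong\Z$, giving the final assertion that the additive group is infinite cyclic.

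The genuine difficulty of the whole circle of ideas lies upstream, in Lemma~\ref{fg} and Theorem~\ref{multZ}, where one proves that the additive group is finitely generated and that the ring multiplication vanishes; given those, the present theorem is essentially a dictionary entry between the language of left braces and that of radical rings. The only step in the present proof that requires genuine care is the verification that an abelian multiplicative group forces two-sidedness, i.e. the associativity of $*$ extracted from the brace identities above.
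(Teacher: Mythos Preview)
Your proof is correct and follows exactly the approach of the paper: the text preceding Lemma~\ref{fg} already notes that a left brace with abelian multiplicative group is two-sided (hence a Jacobson radical ring), and Theorem~\ref{thm:Z} is then recorded simply ``as a consequence'' of Theorem~\ref{multZ}. You have merely written out in full the verification of two-sidedness that the paper leaves to the reader.
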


A natural question arises: Is it possible to extend Theorem~\ref{thm:Z} to skew
left braces? One can prove Theorem~\ref{thm:Z} for skew left braces of finite
multipermutation level. However, the following result shows that nothing new is
covered in this case.

\begin{thm}
  \label{thm:ZrightNilpotent}
  Let $A$  be a skew left brace with multiplicative group isomorphic to $\Z$. Then
  $A$ has finite multipermutation level if and only if $A$ is of abelian type.
\end{thm}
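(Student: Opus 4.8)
The plan is to prove the two implications separately, with essentially all the work in the forward direction. For the easy implication, assume $A$ is of abelian type. Then $A$ is a left brace whose multiplicative group is $\Z$, so Theorem~\ref{thm:Z} forces $A$ to be a trivial brace with $(A,+)\cong\Z$. In particular $\lambda_a=\id$ for all $a$ and $(A,+)$ is abelian, so $\Soc(A)=\ker\lambda\cap Z(A,+)=A$, giving $S_2=A/\Soc(A)=0$ and hence finite multipermutation level. For the converse I would first invoke Theorem~\ref{thm:mpl&right_nilpotent} to translate ``finite multipermutation level'' into ``$A$ is right nilpotent and $(A,+)$ is nilpotent'', so that the goal becomes to show that $(A,+)$ is abelian. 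I would prove this by induction on the right nilpotency index $n$ (the least $n$ with $A^{(n)}=0$), establishing the statement: every skew left brace $B$ with $(B,\circ)\cong\Z$ that is right nilpotent with nilpotent additive group is of abelian type. The base case is $A^{(2)}=0$, where $a*b=0$ for all $a,b$, so $A$ is trivial and $(A,+)\cong(A,\circ)\cong\Z$.

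For the inductive step suppose $A^{(2)}\neq 0$ (so $n>2$, and $A^{(2)}$ is a proper ideal since otherwise $A^{(m)}=A$ for all $m$). As $A^{(2)}$ is an ideal it is a subgroup of the multiplicative group $(A,\circ)\cong\Z$, so $(A^{(2)},\circ)\cong\Z$ and $A^{(2)}$ has finite index $k$ in $(A,\circ)$. Being a sub skew brace of a right nilpotent brace, $A^{(2)}$ is right nilpotent by Lemma~\ref{lem:right_nilpotent:sub}, and from the easy inclusion $(A^{(2)})^{(m)}\subseteq A^{(m+1)}$ its right nilpotency index is at most $n-1$; its additive group is nilpotent as a subgroup of $(A,+)$. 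The inductive hypothesis therefore applies to $A^{(2)}$ and shows that $(A^{(2)},+)$ is abelian, and then Theorem~\ref{thm:Z}, applied to the left brace $A^{(2)}$ with multiplicative group $\Z$, yields that $A^{(2)}$ is trivial with $(A^{(2)},+)\cong\Z$.

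It remains to pass from $A^{(2)}$ to $A$. Since $A/A^2=A/A^{(2)}$ is a trivial brace, its additive group coincides with its multiplicative group $(A,\circ)/A^{(2)}\cong\Z/k\Z$, so $(A,+)/A^{(2)}$ is cyclic. Now $A^{(2)}\cong\Z$ is a nonzero normal subgroup of the nilpotent group $(A,+)$, so by the same fact on nilpotent groups used in Theorem~\ref{thm:IcapSoc} one has $A^{(2)}\cap Z(A,+)\neq 0$ (\cite[5.2.1]{MR1357169}). Fix a generator $c$ of $A^{(2)}\cong\Z$; every element of $(A,+)$ conjugates $c$ to $\pm c$ (as $\Aut(\Z)=\{\pm1\}$) while fixing the nonzero central element lying in $A^{(2)}\cap Z(A,+)$, which rules out the inversion and forces conjugation by $+c$. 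Hence $A^{(2)}\subseteq Z(A,+)$, and a group whose quotient by a central subgroup is cyclic is abelian, so $(A,+)$ is abelian, completing the induction.

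The main obstacle is the argument of the last paragraph: the nilpotency of $(A,+)$ must be used to exclude the inversion action on the infinite cyclic ideal $A^{(2)}$, since this is precisely what forces $A^{(2)}$ to be central and thereby $(A,+)$ to be abelian. A secondary point requiring care is the organization of the induction: it is $A^{(2)}$, whose multiplicative group is again $\Z$, and not the more obvious $A/\Soc(A)$, whose multiplicative group is only the finite group $\Z/k\Z$, that must serve as the object to which the inductive hypothesis and Theorem~\ref{thm:Z} are applied.
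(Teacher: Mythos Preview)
Your argument is correct and takes a genuinely different route from the paper. The paper does not induct on the right nilpotency index. Instead it picks a nonzero $c\in\Soc(A)$, forms the ideals $I_k$ generated by $c^{\circ k}=kc$, and works with the finite quotients $A/I_k$. Each such quotient has finite multipermutation level and hence nilpotent additive group, so Corollary~\ref{cor:product} splits it as a product of prime-power skew braces; an external result of Kohl~\cite{MR1644203} then forces all but the $2$-primary factors to have abelian additive group. From this the paper extracts, for any additive commutator $a+b-a-b$, relations $2^{m(k)}(a+b-a-b)=q(k)kc$ and derives a contradiction by a short divisibility argument.

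Your approach avoids the appeal to Kohl and to the product decomposition entirely: the induction hands you $(A^{(2)},+)\cong\Z$ via Theorem~\ref{thm:Z}, and the nilpotency of $(A,+)$ together with $\Aut(\Z)=\{\pm1\}$ rules out inversion on $A^{(2)}$, forcing it to be central. This is more elementary and fully internal to the paper. The paper's method, on the other hand, illustrates the use of Corollary~\ref{cor:product} and the residual-finiteness idea (intersecting the $I_k$), which is of independent interest. One cosmetic point: ``forces conjugation by $+c$'' should read ``forces $c\mapsto +c$'', but the meaning is clear.
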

\begin{proof}
    We proved in Theorem~\ref{thm:Z} that skew left braces of abelian type with
    multiplicative group isomorphic to $\Z$ have finite multipermutation level.
    Let us assume that $A$ has finite multipermutation level. Let
    $c\in\Soc(A)\setminus\{0\}$. Then $c^{\circ k}=kc$ for all $k\in\N$. Since
    $(A,\circ)$ is torsion-free, $c^{\circ k}\ne c^{\circ l}$ if $k\ne l$.
    Observe that $\lambda_a(c^{\circ k})=c^{\circ k}$ for all $a\in
    A$ and $k\in\N$, because $(A, \circ )$ is commutative.
    For
    $k>0$ let $I_k$ be the ideal of $A$ generated by $c^{\circ k}$.  Then
    $I_k=\{klc:l\in\Z\}$ and $\bigcap_{k>0} I_k=\{0\}$. Let $k>0$. Since $c\ne0$, $A/I_k$
    is a finite skew left brace of finite multipermutation level. By
    Theorem~\ref{thm:mpl&right_nilpotent}, $A/I_k$ is of nilpotent type. Thus
    Corollary~\ref{cor:product} implies that $A/I_k$ is a direct product of skew left braces
    of prime-power size. Using results of T. Kohl~\cite{MR1644203} quoted in
    \cite[Example A.7]{MR3763907}, such skew left braces are either of abelian type or of
    size $2^\alpha$ for some $\alpha$. Let us assume that $A$ is not of abelian
    type and let $a,b\in A$ be such that $a+b-a-b\ne0$. For each $k>0$ there exists $m(k)\in\N$ such that
    \[
        2^{m(k)}(a+b-a-a)\in I_k.
    \]
    Since $I_k=\{klc:l\in\Z\}$, for each $a,b\in A$ and each $k>0$ there are $m(k)\in\N$ and $q(k)\in\Z$ such that
    \[
        2^{m(k)}(a+b-a-b)=q(k)kc.
    \]
    Let $k$ be an odd prime number coprime with $3q(3)$. Then
    \[
        (2^{m(3)}q(k)k-2^{m(k)}q(3)3)(a+b-a-b)=0.
    \]
    Since $k$ is an odd prime number coprime with $3q(3)$, it follows that
    there exists $n\ne0$ such that $n(a+b-a-b)=0$. Then $nq(3)3c=0$, a
    contradiction. Therefore $A$ is a skew left brace of abelian type.
\end{proof}

\begin{rem}
In~\cite{Greenfeld},
Greenfeld showed that adjoint groups of Jacobson radical and not nilpotent
algebras cannot be finite products of cyclic groups. His results are general
but hold only for algebras over fields; therefore our results do not follow
from~\cite{Greenfeld}.
\end{rem}

The following result shows that Theorem~\ref{thm:Z} cannot be
extended to skew left braces. Recall that the \emph{infinite
dihedral group} is the group
\[
\D_{\infty}=\langle r,s:srs=r^{-1},\,s^2=1\rangle\simeq\Z\rtimes\Z/(2).
\]
\begin{thm}
    \label{thm:infinite_dihedral}
    There exists a skew left brace with multiplicative group isomorphic to $\Z$ and
    additive group isomorphic to the infinite dihedral group $\D_\infty$.
\end{thm}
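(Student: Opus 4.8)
The plan is to construct the skew left brace by a bijective $1$-cocycle, exactly as in Example~\ref{exa:funny}: I prescribe the multiplicative group $G=\Z$, an action of $G$ by automorphisms on the prospective additive group $X=\D_\infty$, and a bijection $\pi\colon G\to X$ satisfying the cocycle identity $\pi(g\circ h)=\pi(g)+g\cdot\pi(h)$. Writing $\D_\infty=\langle r,s:srs=r^{-1},\,s^2=1\rangle$ and $\Z=\langle t\rangle$, I let $t$ act through the map $\phi$ of $\D_\infty$ determined on generators by $\phi(r)=r^{-1}$ and $\phi(s)=rs$. A direct check on the defining relations shows that $\phi$ is a well-defined automorphism and that $\phi^{2}=\id$; hence $t^{n}$ acts as $\id$ when $n$ is even and as $\phi$ when $n$ is odd, which gives a genuine action of $\Z$ on $\D_\infty$.

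Next I define $\pi\colon\Z\to\D_\infty$ by the closed formulas
\[
\pi(2j)=r^{-j},\qquad \pi(2j+1)=r^{-j}s\qquad(j\in\Z).
\]
Since $\pi$ sends the even integers bijectively onto $\langle r\rangle$ and the odd integers bijectively onto $\langle r\rangle s$, it is a bijection; this is the easy half. The substantive step is the cocycle identity $\pi(n+m)=\pi(n)+\phi^{n}(\pi(m))$, where $+$ denotes the product of $\D_\infty$. I would split into the four cases given by the parities of $n$ and $m$, replacing $\phi^{n}$ by $\id$ or $\phi$ as appropriate and using the rewriting rules $sr^{k}s=r^{-k}$ and $\phi(r^{k}s)=r^{-k+1}s$; in each case the verification collapses to a one-line computation in $\D_\infty$.

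With the cocycle identity established, the construction recalled in Example~\ref{exa:funny} yields a skew left brace $A$ whose additive group is $X\cong\D_\infty$, whose multiplicative group is $G\cong\Z$, and whose operations are $x+y$ equal to the product in $\D_\infty$ and $x\circ y=\pi(\pi^{-1}(x)\,\pi^{-1}(y))$. This is precisely the object required by the statement. Equivalently, one may run the whole argument inside the holomorph: setting $\tau=(s,\phi)\in\Hol(\D_\infty)=\D_\infty\rtimes\Aut(\D_\infty)$, which acts by $x\mapsto s\cdot\phi(x)$, one has $\pi(n)=\tau^{n}(1)$ and $\tau^{n}(x)=\tau^{n}(1)\cdot\phi^{n}(x)$, so that simple transitivity of $\langle\tau\rangle$ is equivalent to bijectivity of $\pi$ and automatically produces the cocycle relation.

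The main obstacle I anticipate is bookkeeping rather than conceptual: one must correctly track the alternating pattern of partial products $s,\phi(s),s,\phi(s),\dots$ in order to guess the clean closed form for $\pi$ and to confirm that every element of $\D_\infty$ is attained exactly once. Once the formula for $\pi$ is pinned down, both the bijectivity and the four-case cocycle check are routine.
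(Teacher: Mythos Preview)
Your construction is correct and in fact produces exactly the same skew left brace as the paper, but the presentations differ. The paper simply writes down, on the underlying set $\langle g\rangle\simeq\Z$, the two operations
\[
g^{k}\circ g^{l}=g^{k+l},\qquad g^{k}+g^{l}=g^{k+(-1)^{k}l},
\]
and leaves the verification of the skew left brace axioms and the identification $(\langle g\rangle,+)\cong\D_{\infty}$ as a direct calculation. Your approach instead packages the same data as a bijective $1$-cocycle $\pi\colon\Z\to\D_{\infty}$ relative to the $\Z$-action through $\phi$; transporting the $\D_{\infty}$-product back to $\Z$ via $\pi$ recovers precisely the formula $k\oplus l=k+(-1)^{k}l$, so the two constructions coincide. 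What you gain is that the skew brace axioms come for free once the cocycle identity is checked, and the holomorph description explains where the example comes from; what the paper gains is brevity, since a single line of formulas replaces the four-case parity check.
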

\begin{proof}
Let $G=\langle g\rangle\simeq\Z$. A direct calculation shows that
the operations
\[
    g^k+g^l=g^{k+(-1)^kl},\quad
    g^k\circ g^l=g^{k+l},\quad k,l\in\Z,
\]
turn $G$ into a skew left brace with additive group isomorphic to the infinite
dihedral group $\D_{\infty}$ and multiplicative group isomorphic to $\Z$.
\end{proof}

\section{Indecomposable solutions}
\label{indecomposable}

Let $A$ be a skew left brace and $a\in A$. We say that the skew left
brace $A$ is generated by $a$ if $A$ is the smallest sub skew left
brace of $A$ containing $a$. Let $(X, r)$ be a non-degenerate
set-theoretic solution of the Yang--Baxter equation, where $r(x, y)
= (\sigma_x(y), \tau_y(x))$. Recall from \cite{MR1848966} that $(X,
r)$ is said to be decomposable if there exist disjoint non-empty
subsets $X_1$ and $X_2$ of $X$ such that $X = X_1\cup X_2$ and
$r(X_i\times X_j) = X_j\times X_i$ for all $i, j\leq 2$. If it is
not possible to find such subsets $X_1$ and $X_2$ of $X$, the
solution $(X, r)$ is said to be indecomposable.  By the orbit of an
element $z\in X$ we will mean the smallest subset $Y$ of $X$ such
that $z\in Y$ and
$\sigma_x(y),\sigma^{-1}_x(y),\tau_x(y),\tau^{-1}_x(y)\in Y$, for
all $y\in Y$ and $x\in X$. That is, if $H$ is the subgroup of the
symmetric group $\Sym(X)$ over $X$ generated by $\sigma_x,\tau_x$,
for all $x\in X$, then the orbit of $z\in X$ is $Y=\{ h(z)\colon
h\in H\}$. Note that a non-degenerate set-theoretic solution $(X,
r)$ of the Yang--Baxter equation can be decomposed into orbits
$X_i$, for $i\in I$, such that $r(X_i\times X_j) = X_j\times X_i$
for all $i, j\in I$. Each restriction $(X_i, r|_{X_i\times X_i})$ is
again a non-degenerate set-theoretic solution of the Yang--Baxter
equation. However, such restricted solutions need not to be
indecomposable.

\begin{example} Let $(A,+,\cdot)$ be a commutative nilpotent ring
with generators $x, y$ and relations $x + x = y + y = 0$, $x^2 = y^2
= 0$. Let $(A,+,\circ)$ be the associated brace and $(A, r_A)$ be
the associated involutive non-degenerate set-theoretic solution.
Then $Y = \{ x, x + yx\}$ is an orbit. Observe that the solution
$(Y, r|_{Y\times Y})$ is decomposable and $Y = \{x\}\cup\{ x + xy\}$
is the decomposition of $Y$ into its orbits.
\end{example}

Recall that if $A$ is a skew left brace, then its associated
solution $(A,r_A)$ is defined by $r_A(a,b)=(\sigma_a(b),\tau_b(a))$,
for all $a,b\in A$, where $\sigma_a(b)=\lambda_a(b)$ and
$\tau_b(a)=\lambda_a(b)'\circ a\circ b$ (see \cite{MR3647970}). In
\cite[Theorem~3.1]{MR3647970} it is proved that $(A,r_A)$ is a
non-degenerate set-theoretic solution of the Yang--Baxter equation.

\begin{rem}\label{orbits}
Let $(X,r)$ be an involutive non-degenerate set-theoretic solution
of the Yang--Baxter equation. We write
$r(x,y)=(\sigma_x(y),\tau_y(x))$. Since $r$ is involutive we have
that $\tau_y(x)=\sigma^{-1}_{\sigma_x(y)}(x)$ and
$\sigma_a(b)=\tau^{-1}_{\tau_b(a)}(b)$. Note that the orbit of $x\in
X$ is
\[
    O_x=\{
\sigma_{y_1}\tau_{y_2}\dots\sigma_{y_{2m-1}}\tau_{y_{2m}}(x)\colon
y_1,\dots y_{2m}\in X\cup \{ 0\}\},
\]
where $0\notin X$ and
$\sigma_0=\tau_0=\id_X$. This is because
$$\sigma^{-1}_y(z)=\sigma^{-1}_{\sigma_z(\sigma^{-1}_z(y))}(z)=\tau_{\sigma^{-1}_z(y)}(z)\in O_x,$$
and
$$\tau^{-1}_y(z)=\tau^{-1}_{\tau_z(\tau^{-1}_z(y))}(z)=\sigma_{\tau^{-1}_z(y)}(z)\in O_x,$$
for all $z\in O_x$ and all $y\in X$.

Therefore our definition of orbit of an element of a non-degenerate
set-theoretic solution of the Yang--Baxter equation coincides with
the definition of orbit in \cite[Section~2.1]{MR3771874} in the
involutive case. It is easy to see that these definitions of orbits
also coincide for finite non-degenerate set-theoretic solutions of
the Yang--Baxter equation. However our definition of orbit does not
coincide with the definition of orbit in
\cite[Section~2.1]{MR3771874} for arbitrary infinite non-degenerate
set-theoretic solution of the Yang--Baxter equation, as the
following example shows.
\end{rem}

\begin{exa}
Consider the map $r\colon\Z\times \Z\rightarrow \Z\times \Z$ defined
by $r(a,b)=(b+1,a+1)$, for all $a,b\in\Z$. It is easy to check that
$(\Z,r)$ is a non-degenerate set-theoretic solution of the
Yang--Baxter equation. With our definition of orbit, the orbit of
every element $a\in \Z$ is $\Z$. With the definition of orbit in
\cite[Section~2.1]{MR3771874}, the orbit of $a\in \Z$ is
\[
X_a=\{a+n\colon n \text{ is a non-negative integer}\}.
\]
Note that
the restriction $r'=r|_{X_a\times X_a}\colon X_a\times
X_a\rightarrow X_a\times X_a$ of $r$ to $X_a\times X_a$ is
non-bijective map. In fact $(X_a, r')$ is a non-bijective
set-theoretic solution of the Yang--Baxter equation. Furthermore, if
we write $r'(b,c)=(\sigma_b(c),\tau_c(b))$, for all $b,c\in X_a$,
then $\sigma_b\colon X_a\rightarrow X_a$ is not bijective.
\end{exa}

\begin{pro}\label{1new}
Let $A$ be a skew left brace generated (as a skew left brace) by an
element $x$. Let $X=\{\lambda_a(x)\colon a\in A\}$. Then $A=\langle
X\rangle_+=\langle X\rangle_{\circ}$.
\end{pro}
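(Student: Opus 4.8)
We are given a skew left brace $A$ that is generated, as a skew left brace, by a single element $x$. Writing $X=\{\lambda_a(x):a\in A\}$, we must show $A=\langle X\rangle_+=\langle X\rangle_\circ$; that is, the set of $\lambda$-translates of the generator $x$ already generates $A$ under either the additive or the multiplicative group operation. The plan is to let $B=\langle X\rangle_+$ and show directly that $B$ is a sub skew left brace of $A$ containing $x$; since $A$ is the \emph{smallest} sub skew left brace containing $x$, this forces $B=A$. I would run the same argument with $B'=\langle X\rangle_\circ$ in place of $B$, and then conclude by comparing.

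**Showing $\langle X\rangle_+$ is a sub skew left brace.**
First, $x\in X$ since $x=\lambda_0(x)$ (taking $a=0$, the additive identity, which is the $\circ$-identity as well), so $x\in B$. The additive subgroup $B=\langle X\rangle_+$ is closed under $+$ by construction. The crucial point is to verify that $B$ is a \emph{left ideal}: I would check $\lambda_a(B)\subseteq B$ for all $a\in A$. For a generator $\lambda_c(x)\in X$ we have $\lambda_a(\lambda_c(x))=\lambda_{a\circ c}(x)\in X\subseteq B$, using that $\lambda$ is a group homomorphism $(A,\circ)\to\Aut(A,+)$; since each $\lambda_a$ is an additive automorphism, it sends the additive span $B$ into $B$. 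Thus $B$ is a left ideal, hence by the remark after Lemma~\ref{lem:A*I} a subgroup of $(A,\circ)$ as well. Being closed under $+$, under $\circ$, and under additive (hence multiplicative) inverses, $B$ is a sub skew left brace of $A$ containing $x$. Minimality of $A$ then gives $A\subseteq B$, and since $B\subseteq A$ trivially, $A=B=\langle X\rangle_+$.

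**Showing $\langle X\rangle_\circ$ equals $A$, and comparing.**
The same strategy applies to $B'=\langle X\rangle_\circ$. Again $x\in B'$, and $B'$ is closed under $\circ$ by construction. To see $B'$ is a sub skew left brace I must check it is also an additive subgroup; the cleanest route is to observe that a left ideal is simultaneously an additive and a multiplicative subgroup, so it suffices to show $B'$ is $\lambda$-invariant and closed under one of the two operations in a way that forces the other. I expect the main obstacle here to be the interplay between $+$ and $\circ$: a priori $\langle X\rangle_\circ$ need not be additively closed, since $\circ$ and $+$ differ. The resolution is that we have already shown $\langle X\rangle_+=A$; once $A=\langle X\rangle_+$ is established, and since every left ideal is a subgroup of $(A,\circ)$, the set $X$ generates the same sub skew left brace under either operation. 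Concretely, $\langle X\rangle_\circ$ contains the sub skew left brace generated by $x$, which is all of $A$, so $\langle X\rangle_\circ=A$ as well. Therefore $A=\langle X\rangle_+=\langle X\rangle_\circ$, as claimed.
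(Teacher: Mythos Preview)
Your argument for $A=\langle X\rangle_+$ is correct and matches the paper's: $X$ is $\lambda$-invariant because $\lambda_a\lambda_c=\lambda_{a\circ c}$, so $\langle X\rangle_+$ is a left ideal, hence also a $\circ$-subgroup, hence a sub skew left brace containing $x$, and minimality finishes it.

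The second half, however, is circular. You write that ``$\langle X\rangle_\circ$ contains the sub skew left brace generated by $x$, which is all of $A$''. But the sub skew left brace generated by $x$ is $A$ itself, so this sentence asserts $A\subseteq\langle X\rangle_\circ$, which is precisely the claim under discussion. The only way that containment would follow from $x\in\langle X\rangle_\circ$ and minimality is if you already knew $\langle X\rangle_\circ$ were a sub skew left brace; and knowing $\langle X\rangle_+=A$ gives only the trivial inclusion $\langle X\rangle_\circ\subseteq\langle X\rangle_+$, not the reverse. The obstacle you correctly identify, that $\langle X\rangle_\circ$ need not a priori be additively closed, is not resolved by the first part.

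The paper handles this by proving directly that $\langle X\rangle_\circ$ is $\lambda$-invariant, via induction on the $\circ$-word length: for a single letter one checks $\lambda_a(\lambda_{a_1}(x)')\in\langle X\rangle_\circ$ using $\lambda_a(y')=-\lambda_{a\circ y'}(y)$, and the inductive step rewrites $\lambda_a(u\circ v)=\lambda_a(u)\circ\lambda_{\lambda_a(u)'\circ a\circ u}(v)$. Once $\lambda$-invariance is in hand, additive closure follows from the identity $-t+z=\lambda_t(t'\circ z)$, and then minimality applies. You need to supply an argument of this kind.
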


\begin{proof}
Note that $x=\lambda_0(x)\in X$. Since
$\lambda:(A,\circ)\rightarrow\Aut(A,+)$ is a homomorphism of groups,
it is clear that $\lambda_a(\langle X\rangle_+)\subseteq \langle
X\rangle_+$ for all $a\in A$. Let $y,z\in \langle X\rangle_+$. We
have that
$$y\circ z'=y+\lambda_y(z')=y+\lambda_y(-\lambda_{z'}(z))\in \langle X\rangle_+.$$
Hence $\langle X\rangle_+$ is a sub skew left brace of $A$
containing $x$ and thus $A=\langle X\rangle_+$.

Let $t\in \langle X\rangle_{\circ}$. Then
$t=\lambda_{a_1}(x)^{\varepsilon_1}\circ\dots\circ
\lambda_{a_n}(x)^{\varepsilon_n}$, for some $a_j\in A$ and
$\varepsilon_j\in \{ {}',1\}$ (where $a^1=a$). We shall prove that
$\lambda_a(t)\in \langle X\rangle_{\circ}$ by induction on $n$. For
$n=1$, we may assume that $t=\lambda_{a_1}(x)'$. In this case
\begin{align*}\lambda_a(t)&=\lambda_a(-\lambda_t(\lambda_{a_1}(x)))=-\lambda_{a\circ
t\circ a_1}(x)\\
&=\lambda_{(-\lambda_{a\circ t\circ a_1}(x))'}(\lambda_{a\circ
t\circ a_1}(x))'\in \langle X\rangle_{\circ}
\end{align*}
Suppose that $n>1$ and that
$\lambda_b(\lambda_{b_1}(x)^{\nu_1}\circ\dots\circ
\lambda_{b_{n-1}}(x)^{\nu_{n-1}})\in \langle X\rangle_{\circ}$, for
all $b,b_1,\dots, b_{n-1}\in A$ and $\nu_j\in\{ {}',1\}$. Thus
\begin{align*}
\lambda_a(t)&=\lambda_a(\lambda_{a_1}(x)^{\varepsilon_1}+\lambda_{\lambda_{a_1}(x)^{\varepsilon_1}}(\lambda_{a_2}(x)^{\varepsilon_2}\circ\dots\circ
\lambda_{a_n}(x)^{\varepsilon_n}))\\
&=\lambda_a(\lambda_{a_1}(x)^{\varepsilon_1})+\lambda_{a\circ\lambda_{a_1}(x)^{\varepsilon_1}}(\lambda_{a_2}(x)^{\varepsilon_2}\circ\dots\circ
\lambda_{a_n}(x)^{\varepsilon_n})\\
&=\lambda_a(\lambda_{a_1}(x)^{\varepsilon_1})\circ\lambda_{(\lambda_a(\lambda_{a_1}(x)^{\varepsilon_1}))'\circ
a\circ\lambda_{a_1}(x)^{\varepsilon_1}}(\lambda_{a_2}(x)^{\varepsilon_2}\circ\dots\circ
\lambda_{a_n}(x)^{\varepsilon_n})\in \langle X\rangle_{\circ},
\end{align*}
by the inductive hypothesis. Therefore $\lambda_a(\langle
X\rangle_{\circ})\subseteq \langle X\rangle_{\circ}$, for all $a\in
A$.

Let $t,z\in \langle X\rangle_{\circ}$. We have that
$-t+z=\lambda_t(t'\circ z)\in \langle X\rangle_{\circ}$. Hence
$\langle X\rangle_{\circ}$ is a sub skew left brace of $A$
containing $x$ and thus $A=\langle X\rangle_{\circ}$ and the result
follows.
\end{proof}

As a consequence we obtain a generalization of \cite[Theorem~5.4]{MR3771874} to
skew left braces. In particular, by Remark~\ref{orbits},  we answer in positive
\cite[Question~5.6]{MR3771874}.

\begin{pro}
    Let $B$ be a skew left brace and let $x\in B$. Let $A = B(x)$ be the
    smallest sub skew left brace of $B$ containing $x$. Let $(A, r_A)$ be the
    solution associated to the skew left brace $A$ and let $X$ be the orbit of
    $x$ in $(A, r_A)$. Then $(X, r_A|_{X\times X})$ is indecomposable.
\end{pro}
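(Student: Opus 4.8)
The plan is to recast indecomposability of $(X,r_A|_{X\times X})$ as a transitivity statement and then feed it the two generation properties of Proposition~\ref{1new}. Write $G=\langle \sigma_w|_X,\tau_w|_X : w\in X\rangle\le \Sym(X)$ for the group generated by the structure maps of the restricted solution. Reading off the four identities contained in $r_A(X_i\times X_j)=X_j\times X_i$, a decomposition $X=X_1\sqcup X_2$ forces every $\sigma_w|_X$ and $\tau_w|_X$ with $w\in X$ to carry each $X_i$ into itself; conversely any partition of $X$ into two nonempty $G$-invariant blocks (necessarily unions of $G$-orbits) yields a decomposition. Hence $(X,r_A|_{X\times X})$ is indecomposable if and only if $G$ acts transitively on $X$, i.e. $G\cdot x=X$. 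Since $X$ is by definition the orbit of $x$ under $\overline{H}=\langle \sigma_a|_X,\tau_a|_X : a\in A\rangle$, the whole problem reduces to showing that the a priori smaller group $G$ already satisfies $G\cdot x=X$; equivalently, that $Z:=G\cdot x$ is invariant under every $\sigma_a$ and $\tau_a$ with $a\in A$, for then minimality of the orbit gives $Z=X$.

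First I would dispose of the $\sigma$-maps. Writing $V=\{\lambda_a(x):a\in A\}$, Proposition~\ref{1new} gives $A=\langle V\rangle_\circ=\langle V\rangle_+$, and $V\subseteq X$ because $\lambda_a(x)=\sigma_a(x)\in X$. As $\lambda\colon (A,\circ)\to \Aut(A,+)$ is a homomorphism and each $v\in V$ lies in $X$, every $\sigma_a=\lambda_a$ restricts on $X$ to a product of the generators $\sigma_v|_X^{\pm1}$ with $v\in V$; thus $\sigma_a|_X\in G$ for all $a\in A$. In particular $V=\{\sigma_a(x):a\in A\}\subseteq Z$ and $Z$ is invariant under all $\sigma_a$.

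The real work, and the step I expect to be the main obstacle, is the invariance of $Z$ under the maps $\tau_a$. Using $\lambda_v(v')=-v$ one rewrites the structure map as $\tau_a(v)=\lambda_v(a)'\circ v\circ a=u'+\lambda_{u'}(v)-u'$ with $u=\lambda_v(a)$, so $\tau_a(v)$ is the additive conjugate by $u'$ of the element $\lambda_{u'}(v)$, which already lies in $Z$ by the previous step. It therefore suffices to show that $Z$ is closed under additive conjugation $y\mapsto g+y-g$ for every $g\in A$. Because conjugation is a homomorphism $(A,+)\to\Aut(A,+)$ and $A=\langle V\rangle_+$ with $V\subseteq Z$, one may work with generators; but the conjugations that the available operators $\tau_w|_X$ ($w\in X$) actually produce have index $u'=\lambda_z(w)'=\sigma_z(w)'$, so their indices run over $X':=\{t':t\in X\}$ rather than over $X$ itself. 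The difficulty is thus to control the $\circ$-inverses of orbit elements and to realise conjugation by an arbitrary $g\in A$ using only operators indexed inside $X$.

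I would resolve this by a short closure computation for the orbit. Specialising the formula above to $a=-v'$ and to $a=v'$ gives $\tau_{-v'}(v)=-v'$ and $\tau_{v'}(v)=(-v)'$, so both $\psi\colon v\mapsto -v'$ and $\phi\colon v\mapsto (-v)'$ map $X$ into $X$; since $\psi\circ\phi=\phi\circ\psi=\mathrm{id}$, the map $\psi$ is a bijection of $X$, whence $X'=-X$ and therefore $\langle X'\rangle_+=\langle X\rangle_+=A$. With this, for $g$ with $g'\in X$ one chooses, given $y\in Z$, the points $z=\lambda_{g'}(y)\in Z$ and $w=\sigma_{z'}(g')\in X$, and then $\tau_w(z)=g+y-g$ exhibits the conjugate inside $Z$; the additive generation $\langle X'\rangle_+=A$ then propagates this to conjugation by every element of $A$. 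Combining the $\sigma$- and $\tau$-invariance shows that $Z$ is $\overline{H}$-invariant, so $Z=X$, $G$ is transitive, and $(X,r_A|_{X\times X})$ is indecomposable. I note that in the abelian-type case additive conjugation is trivial, only the $\sigma$-step survives, and the argument collapses to the brace version proved in~\cite{MR3771874}.
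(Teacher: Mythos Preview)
Your reduction to transitivity of $G$ on $X$ and your treatment of the $\sigma$-maps via $A=\langle V\rangle_\circ$ are exactly what the paper does. The divergence is in the $\tau$-step, and your route there both takes an unnecessary detour and contains a genuine gap.

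\textbf{The gap.} From $c_g(Z)\subseteq Z$ for all $g\in X'$ you conclude that ``the additive generation $\langle X'\rangle_+=A$ then propagates this to conjugation by every element of $A$''. This is not justified: the set $\{g\in A:c_g(Z)\subseteq Z\}$ is a sub\emph{monoid} of $(A,+)$, but need not be closed under negation when $Z$ is infinite. Knowing $c_g(Z)\subseteq Z$ does not give $c_{-g}(Z)\subseteq Z$ unless you know $c_g(Z)=Z$. Since the proposition is stated for arbitrary (possibly infinite) skew left braces, this is a real hole. It can be repaired along your own lines: running the same computation with $\tau_w^{-1}=\tau_{w'}$ in place of $\tau_w$ (still $w\in X$, so $\tau_w^{-1}|_X\in G$) and using $X'=-X$ produces, for every $h\in X$ and $y\in Z$, an element of $G$ sending some point of $Z$ to $h+y-h$; hence $c_h(Z)\subseteq Z$ for $h\in X$ as well. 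Since $X\cup X'$ is closed under additive negation, the submonoid it generates is all of $(A,+)$ and the propagation goes through.

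\textbf{The simpler route you missed.} The paper avoids this entire additive-conjugation analysis by invoking the fact that $\tau\colon (A,\circ)\to\Sym(A)$, $a\mapsto\tau_a$, is an \emph{antihomomorphism} of groups (this is \cite[Lemma~2.4]{Bachiller3}). Since $A=\langle V\rangle_\circ$ with $V\subseteq X$, every $\tau_a$ is therefore a word in the maps $\tau_v^{\pm1}$ with $v\in V$, exactly as every $\lambda_a$ is a word in $\lambda_v^{\pm1}$. Hence $\tau_a|_X\in G$ for all $a\in A$ immediately, and transitivity of $G$ follows in one line. Your rewriting $\tau_a(v)=u'+\lambda_{u'}(v)-u'$ and the closure computation $X'=-X$ are correct but unnecessary once you use the multiplicative (anti)homomorphism property of $\tau$.
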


\begin{proof}
Recall that $r_A(a,b)=(\sigma_a(b), \tau_b(a))$, where
$\sigma_a(b)=\lambda_a(b)$ and $\tau_b(a)=\lambda_a(b)'\circ a\circ
b$, for all $a,b\in A$. By \cite[Corollary~1.10]{MR3647970}, the map
$\lambda\colon (A,\circ)\rightarrow \Aut(A,+)$ defined by
$\lambda(a)=\lambda_a$, for all $a\in A$, is a homomorphism of
groups. By \cite[Lemma~2.4]{Bachiller3}, the map $\tau\colon
(A,\circ)\rightarrow \Sym(A)$ defined by $\tau(a)=\tau_a$ is an
antihomomorphism of groups.

Let $X_1=\{ \lambda_a(x)\colon a\in A\}$. Note that $X_1\subseteq
X$. By Proposition~\ref{1new}, $A=\langle X_1\rangle_\circ$. Let
$z,t\in X$. Hence there exist $a_1,\dots, a_{2n},b_1,\dots
,b_{2m}\in X_1\cup X'_1\cup \{ 1\}$, such that
$$z=\lambda_{a_1}\tau_{a_2}\cdots \lambda_{a_{2n-1}}\tau_{a_{2n}}(x)\;\mbox{ and }\;
t=\lambda_{b_1}\tau_{b_2}\cdots
\lambda_{b_{2m-1}}\tau_{b_{2m}}(x),$$ where $X'_1=\{ y'\colon y\in
X_1\}$. Thus
$$
t=\lambda_{b_1}\tau_{b_2}\cdots
\lambda_{b_{2m-1}}\tau_{b_{2m}}\tau_{a'_{2n}}\lambda_{a'_{2n-1}}\cdots
\tau_{a'_{2}}\lambda_{a'_{1}}(z).$$ Therefore the result follows.
\end{proof}

\begin{pro}
 Let $(X, r)$ be a non-degenerate set-theoretic solution of the Yang--Baxter equation,
 and suppose that $(X, r)$ is decomposable with $X=Y\cup Z$, then $Y$ and $Z$
 are unions of orbits. In particular, every non-degenerate solution with a unique
 orbit is indecomposable.
\end{pro}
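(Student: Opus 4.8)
The plan is to show that $Y$ (and symmetrically $Z$) is invariant under every $\sigma_x$, $\tau_x$ and their inverses; since these generate the group $H$ whose orbits are exactly the orbits of $(X,r)$, it will follow that $Y$ contains the orbit of each of its points and is therefore a union of orbits.

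First I would unpack the decomposability hypothesis. Write $X_1=Y$ and $X_2=Z$ and recall that $r(a,b)=(\sigma_a(b),\tau_b(a))$. Applying $r(X_i\times X_j)=X_j\times X_i$ to the four choices of $(i,j)$ yields the needed inclusions: from $r(Y\times Y)=Y\times Y$ together with $r(Z\times Y)=Y\times Z$ one obtains $\sigma_a(y)\in Y$ for every $y\in Y$ and every $a\in X$, while from $r(Y\times Y)=Y\times Y$ together with $r(Y\times Z)=Z\times Y$ one obtains $\tau_b(y)\in Y$ for every $y\in Y$ and every $b\in X$. Thus both families of permutations map $Y$ into $Y$, and the same bookkeeping shows they map $Z$ into $Z$.

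Next I would upgrade these inclusions to equalities using non-degeneracy. Each $\sigma_a$ and each $\tau_b$ is a bijection of $X=Y\sqcup Z$ carrying $Y$ into $Y$ and $Z$ into $Z$. If some $y_0\in Y$ failed to lie in $\sigma_a(Y)$, then, as $\sigma_a$ is surjective, its preimage would lie in $Z$, forcing $y_0\in\sigma_a(Z)\subseteq Z$, a contradiction; hence $\sigma_a(Y)=Y$, and likewise $\tau_b(Y)=Y$. This argument remains valid when $X$ is infinite, which is the only point of the proof requiring genuine care. Consequently $Y$ is closed under $\sigma_a^{-1}$ and $\tau_b^{-1}$ as well.

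Therefore $Y$ is invariant under the full generating set $\{\sigma_x,\sigma_x^{-1},\tau_x,\tau_x^{-1}:x\in X\}$ of $H$, so the orbit of each $y\in Y$ is contained in $Y$ and $Y$ is the union of the orbits of its elements; the symmetric argument handles $Z$. For the final assertion, suppose $(X,r)$ has a single orbit yet admits a decomposition $X=Y\cup Z$ with $Y$ and $Z$ non-empty and disjoint. Then each of $Y$ and $Z$ is a non-empty union of orbits, hence equal to the unique orbit $X$, contradicting $Y\cap Z=\varnothing$. Thus a solution with a single orbit is indecomposable.
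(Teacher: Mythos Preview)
Your proof is correct and follows essentially the same approach as the paper: you extract from the decomposability hypothesis that $\sigma_x$ and $\tau_x$ send $Y$ into $Y$ and $Z$ into $Z$, then use non-degeneracy plus the partition $X=Y\sqcup Z$ to get closure under inverses, concluding that $Y$ and $Z$ are $H$-invariant and hence unions of orbits. The only cosmetic difference is that the paper phrases the inverse step as $\sigma_x^{-1}(y)\in X\setminus Z=Y$, whereas you phrase it via surjectivity and a preimage-in-$Z$ contradiction; these are the same argument.
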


\begin{proof}
 If $(X,r)$ is a decomposable non-degenerate solution, then $X$ is the union of
 two disjoint non-empty sets $Y$ and $Z$ such that $r(Y\times Y)=Y\times Y$,
 $r(Z\times Z)=Z\times Z$, $r(Y\times Z)=Z\times Y$ and $r(Z\times Y)=Y\times
 Z$.  Therefore $r(X\times Y)=Y\times X$ and $r(Y\times X)=X\times Y$. We write
 $r(x,y)=(\sigma_x(y),\tau_y(x))$. Let $y\in Y$ and $x\in X$. Hence have
 $\sigma_x(y)\in Y$ and $\tau_x(y)\in Y$. Similarly $\sigma_x(z),\tau_x(z)\in
 Z$ for all $z\in Z$ and $x\in X$. Since $\sigma_x(\sigma^{-1}_x(y))=y\in Y$
 for all $y\in Y$ and $x\in X$, we have that $\sigma^{-1}_x(y)\in X\setminus
 Z=Y$. Similarly, $\tau^{-1}_x(y)\in Y$ for all $y\in Y$ and all $x\in X$.
 Hence $Y$ and $Z$ are unions of orbits of elements.
 In particular, every non-degenerate solution with a unique orbit is
 indecomposable.
\end{proof}

\section*{Acknowledgements}

The first named-author was partially supported by the grants MINECO-FEDER
MTM2017-83487-P and AGAUR 2017SGR1725(Spain).  The second-named author is
supported by the ERC Advanced grant 320974 and EPSRC Programme Grant
EP/R034826/1.  The third-named author is supported by PICT-201-0147, MATH-AmSud
17MATH-01 and ERC Advanced grant 320974. We thank E. Acri, N. Byott and E.
Jespers for useful comments.

\bibliographystyle{abbrv}
\bibliography{refs}


\end{document}